\documentclass[11pt]{amsart}
\usepackage{amsmath, amsfonts, amssymb, amsthm}
\usepackage{enumerate}
\usepackage{graphicx}
\usepackage{xcolor}
\usepackage[colorlinks,citecolor=blue]{hyperref}
\usepackage{verbatim}
\usepackage[headings]{fullpage}
\usepackage{calc}
\usepackage[all, cmtip]{xy}
\usepackage{thmtools}
\usepackage{thm-restate}

\usepackage{mathtools}

\newtheorem{theorem}{Theorem}[section]
\newtheorem{lemma}[theorem]{Lemma}
\newtheorem{proposition}[theorem]{Proposition}

\newtheorem{corollary}[theorem]{Corollary}
\newcounter{theoremalph}

\newtheorem{thmAlph}[theoremalph]{Theorem}
\theoremstyle{definition}
\newtheorem{definition}[theorem]{Definition}
\newtheorem{example}[theorem]{Example}
\newtheorem{question}[theorem]{Question}

\theoremstyle{remark}
\newtheorem{remark}[theorem]{Remark}

\numberwithin{equation}{section}

\DeclareMathOperator{\Out}{Out}
\DeclareMathOperator{\Aut}{Aut}

\newcommand{\rank}{\mathfrak{n}}
\newcommand{\op}{\operatorname}

\newcommand{\la}{\langle}
\newcommand{\ra}{\rangle}

\newcommand{\FreeS}[1][\rank]{\mathcal{FS}_{#1}}

\newcommand{\bFreeS}{\mathcal{FS}^*}
\newcommand{\FreeSone}{\mathcal{FS}^1}
\newcommand{\FreeSred}[1][\rank]{\mathcal{FS}_{#1}^{r}}
\newcommand{\bFreeSred}{\mathcal{FS}^{r,*}}

\newcommand{\FreeF}[1][\rank]{\mathcal{F}_{#1}}
\newcommand{\FFS}[1][\rank]{\mathcal{FF}_{#1}}

\newcommand{\ffa}{\mathcal{A}}
\newcommand{\ffb}{\mathcal{B}}

\newcommand{\free}{\mathbb{F}}
\newcommand{\CV}[1][\rank]{\mathcal{CV}_{#1}}

\newcommand{\CVbound}{\partial_s \CV}
\newcommand{\CVred}[1][\rank]{\CV[#1]^{r}}
\newcommand{\jewel}{\mathcal{J}_\rank}

\newcommand{\set}[1][ ]{\ensuremath{ \lbrace #1 \rbrace}}
\newcommand{\bsl}{\ensuremath{\setminus}}
\newcommand{\mcC}{\ensuremath{\mathcal{C}}}
\newcommand{\mcP}{\ensuremath{\mathcal{P}}}
\newcommand{\mcZ}{\ensuremath{\mathcal{Z}}}

\newcommand{\V}{\ensuremath{\mathcal{V}}}
\newcommand{\D}{\ensuremath{\mathcal{D}}}

\newcommand{\Sub}{\ensuremath{\operatorname{Sub}}}

\newcommand{\X}{\ensuremath{\operatorname{X}}}
\newcommand{\cX}{\ensuremath{\operatorname{cX}}}
\newcommand{\Forest}{\ensuremath{\operatorname{For}}}
\newcommand{\Core}{\ensuremath{\operatorname{C}}}
\newcommand{\cCore}{\ensuremath{\operatorname{cC}}}

\newcommand{\real}[1]{\ensuremath{\left\lVert #1\right\rVert}}

\pdfsuppresswarningpagegroup=1
\title{Homotopy type of the complex of free factors of a free group}
\author{Benjamin Br\"{u}ck}
\address{\tt B.\ Br\"uck, University of Copenhagen, Department of Mathematical Sciences, \,\newline Universitetsparken 5, DK-2100 Copenhagen, Denmark  \newline
 http://www.math.uni-bielefeld.de/\~{}bbrueck/} %
\email{\tt benjamin.brueck.maths@gmail.com}

\author{Radhika Gupta }
\address{\tt R.\ Gupta, Temple University, Department of Mathematics \,\newline Wachman Hall, 1805 North Broad Street, Philadelphia, PA 19122, Pennsylvania, USA
\newline
https://sites.google.com/view/radhikag/} %
\email{\tt radhikagupta.maths@gmail.com}
\thanks{\today}

\begin{document}

\begin{abstract}
We show that the complex of free factors of a free group of rank $\rank \geq 2$ is homotopy equivalent to a wedge of spheres of dimension $\rank-2$. We also prove that for $\rank \geq 2$, the complement of (unreduced) Outer space in the free splitting complex is homotopy equivalent to the complex of free factor systems and moreover is $(\rank-2)$-connected. In addition, we show that for every non-trivial free factor system of a free group, the corresponding relative free splitting complex is contractible. 
\end{abstract}

\subjclass[2010]{Primary 20F65; Secondary 20F28, 20E05, 57M07.}

\maketitle
\section{Introduction}\label{sec:intro}
Let $\free$ be the free group of finite rank $\rank$. A free factor of $\free$ is a subgroup $A$ such that $\free = A \ast B$ for some subgroup $B$ of $\free$. Let $[.]$ denote the conjugacy class of a subgroup of $\free$. Define $\FreeF$ to be the partially ordered set (poset) of conjugacy classes of proper, non-trivial free factors of $\free$ where $[A]\leq [B]$ if for suitable representatives, one has $A\subseteq B$. We will call the order complex (geometric realisation) of this poset the \emph{complex of free factors} or \emph{free factor complex} and denote it also by $\FreeF$. Since a maximal nested chain of conjugacy classes of free factors has length $\rank-2$ (see Section~\ref{sec:posetprelim} for the notational conventions we use),
 $\FreeF$ is $(\rank-2)$-dimensional. Note that for $\rank =2$, our definition differs from the usual one: commonly, two conjugacy classes of free factors of $\free_2$ are connected by an edge in $\FreeF[2]$ if they have representatives that span a basis.
There is a natural action of the group of outer automorphisms of $\free$, denoted $\Out(\free)$, on $\FreeF$. The geometry of this complex has been studied very well in recent years and it was used to improve the understanding of $\Out(\free)$. Most notably, Bestvina and Feighn in \cite{BF:FreeFactorComplex} showed that $\FreeF$ is Gromov-hyperbolic, in analogy to Masur--Minsky's hyperbolicity result for the curve complex of a surface \cite{MM:CurveComplex}. In this paper, we investigate the topology of $\FreeF$. Our main result is as follows: 

\begin{thmAlph}\label{thm:FreeFactorComplex}
For $\rank \geq 2$, the free factor complex $\FreeF$ is homotopy equivalent to a countable infinite wedge of spheres of dimension $\rank-2$. 
\end{thmAlph}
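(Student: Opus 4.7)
The plan is to derive Theorem A from the other two results stated in the abstract (the homotopy type of $\FreeS \setminus \CV$ and the contractibility of relative free splitting complexes), combined with a poset-theoretic comparison of $\FreeF$ and the complex of free factor systems $\FFS$.

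First, I would establish $(\rank-3)$-connectivity of $\FreeF$. The natural ambient complex is $\FFS$, which contains $\FreeF$ as the subposet of singleton free factor systems and is $(\rank-2)$-connected by the second main result. To transfer this connectivity to $\FreeF$, I would analyze the inclusion $\FreeF \hookrightarrow \FFS$ poset-theoretically: for each free factor system $\mathcal{F}$ with more than one component, the subposet of $\FFS$ of refinements of $\mathcal{F}$ is, up to deformation retraction, a relative free splitting complex of the type treated in the third theorem, hence contractible. A cover or discrete Morse argument based on these contractible fibers, combined with Quillen's Theorem A or the nerve lemma, then yields the desired $(\rank-3)$-connectivity of $\FreeF$.

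Second, since $\FreeF$ has dimension $\rank - 2$ (maximal chains of nested proper free factors have length $\rank - 2$, as noted in the introduction) and is $(\rank-3)$-connected, the Hurewicz and Whitehead theorems imply $\FreeF \simeq \bigvee S^{\rank-2}$.

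Third, to see that the wedge has infinitely many summands, I would use the $\Out(\free)$-action on $\FreeF$: a carefully chosen explicit top-dimensional cycle (for example, one built from the lattice of free factors containing a fixed basis) has translates under an infinite family of outer automorphisms that remain linearly independent in $H_{\rank-2}(\FreeF;\mathbb{Q})$. For $\rank = 2$, this reduces to the fact that there are infinitely many conjugacy classes of primitive elements in $\free$.

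The main obstacle will be the connectivity step: identifying the subposet of refinements of a given free factor system with a relative free splitting complex up to a controllable homotopy, and then matching this contractibility with the hypotheses of a poset fiber theorem, is likely where the bulk of the technical work lies. A subtlety is that $\FreeF$ and $\FFS$ have comparable dimensions, so the comparison must be sensitive to the fine poset structure rather than being a mere dimension-shift argument.
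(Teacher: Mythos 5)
Your second and third steps (Hurewicz/Whitehead once $(\rank-3)$-connectivity is known, and exploiting the $\Out(\free)$-action plus explicit apartments to show the wedge is non-trivial and countably infinite) match the spirit of the paper's final corollary and are fine. The problem is the connectivity step: transferring $(\rank-2)$-connectivity from $\FFS$ to $\FreeF$ via the inclusion $i\colon \FreeF\hookrightarrow\FFS$ does not work, because the Quillen fibres of this map are badly disconnected. For $\ffa=\{[A_1],\ldots,[A_k]\}\in\FFS$ with $k\geq 2$, the downward fibre $i^{-1}(\FFS_{\leq\ffa})$ consists of all singletons $\{[B]\}$ with $B$ conjugate into some $A_j$; since a non-trivial subgroup cannot be conjugate into two distinct components of a free factor system, this is a disjoint union of the $k$ cones $\FreeF_{\leq[A_j]}$, hence not even $0$-connected. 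The upward fibres fare no better: if $\free=A_1\ast\cdots\ast A_k$ with $k\geq 2$, then $i^{-1}(\FFS_{\geq\ffa})$ is empty, since no proper free factor can contain conjugates of all the $A_j$. So Lemma~\ref{lem:QuillenFibres} cannot be applied to $i$ with any useful connectivity threshold, and the ``subposet of refinements of $\mathcal{F}$'' you refer to (which is $\FFS_{\leq\mathcal{F}}$, trivially a cone) is a subposet of $\FFS$ rather than a fibre over $\FreeF$, so its contractibility gives no information about $\FreeF$ either.

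This is precisely the difficulty the paper routes around. Rather than comparing $\FreeF$ to $\FFS$, the paper first replaces $\FreeF$ by the homotopy-equivalent subposet $\FreeSone\subset\FreeS$ of free splittings with exactly one non-trivial vertex group (Proposition~\ref{prop:HomotopyEquivalence}, using Theorem~\ref{thm:contractibilty of FreeSone}), and then introduces the thickened poset $\mcZ^1\subset L\times\FreeSone$ fibered over the spine $L$. The payoff is that the fibres of $q_1\colon\mcZ^1\to L$ are posets $\cCore(G)$ of proper \emph{connected} core subgraphs of a marked graph $G$, and Proposition~\ref{prop:cX(G) homotopy} shows these are $(\rank-2)$-spherical -- in particular $(\rank-3)$-connected -- so Lemma~\ref{lem:QuillenFibres} applies cleanly since $L$ is contractible. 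The ``connected'' restriction on the core subgraph is exactly what encodes the single-vertex-group condition of $\FreeSone$, and it is what replaces the disconnected fibres you would get from the naive inclusion into $\FFS$. If you want to salvage a derivation along your lines, you would need to find an intermediate object playing the role of $\FreeSone$ and $\mcZ^1$; the inclusion $\FreeF\hookrightarrow\FFS$ alone does not supply the fibre control that Quillen's lemma requires.
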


In \cite{HV:ComplexOfFreeFactors}, Hatcher and Vogtmann showed that  the geometric realisation of the poset of proper free factors in $\free$ is homotopy equivalent to a wedge of spheres of dimension $\rank-2$. Note that Hatcher and Vogtmann's complex is different from the free factor complex $\FreeF$ in that its vertices are proper free factors and not conjugacy classes of proper free factors. Since $\FreeF$ comes with a natural action of $\Out(\free)$ instead of $\Aut(\free)$, the focus has shifted more towards this version over the years.

\subsection*{Motivation} The motivation for describing the homotopy type of this and similar factor complexes comes from the analogy with the rational Tits building, $\Delta(n, \mathbb{Q})$, associated to $\operatorname{SL}_n(\mathbb{Z})$. The definition of these $\Out(\free)$-simplicial complexes is similar to $\Delta(n,\mathbb{Q})$. By the Solomon--Tits theorem (\cite{SolomonTits}), the rational Tits building is homotopy equivalent to a wedge of spheres of dimension $n-2$. In \cite{BorelSerre}, Borel and Serre used this to show that the dualising module of any torsion free finite index subgroup $\Gamma$ of $\operatorname{SL}_n(\mathbb{Z})$ is $\widetilde{H}_{n-2}(\Delta(n,\mathbb{Q}), \mathbb{Z})=:D$, that is, $$H^i(\Gamma,M)\cong H_{d-i}(\Gamma, M \otimes D)$$ for any $\Gamma$-module $M$, for all $i>0$, and $d$ equal to the virtual cohomological dimension of $\operatorname{SL}_n(\mathbb{Z})$. 

This relationship between $\operatorname{SL}_n(\mathbb{Z})$ and $\Delta(n,\mathbb{Q})$ has been successfully extended to the mapping class group of an orientable surface of genus $g$ and $p$ punctures $\op{MCG}(\Sigma_{g,p})$ and the associated curve complex $\mathcal{C}(\Sigma_{g,p})$. Harvey (\cite{Harvey}) defined the curve complex and showed that it can be seen as a boundary structure of Teichm\"uller space. Harer (\cite{H:VCD} for punctured surfaces) and Ivanov (\cite{Ivanov} for closed surfaces) showed that this complex is homotopy equivalent to a wedge of spheres. As a consequence, the top dimensional reduced homology of the curve complex is the dualising module for the mapping class group.

The rational Tits building and the curve complex capture the asymptotic geometry of the symmetric space for $\operatorname{SL}_n(\mathbb{Z})$ and Teichm\"{u}ller space for $\op{MCG}(\Sigma)$, respectively. In this paper, we also obtain some partial results about the asymptotic geometry of Culler--Vogtmann's Outer space $\CV$. 

\subsection*{Asymptotic geometry of Outer space}
Let $\CV$ be Culler--Vogtmann's Outer space and $L$ be its spine. We denote by $\CVred$ reduced Outer space which has spine $K$. Let $\FreeS$ be the free splitting complex. For the definitions, see Section~\ref{sec:prelimOuterspace}. We view $L, K$ and $\FreeS$ as partially ordered sets and use the same notation for both the poset and its order complex. As explained in Section~\ref{sec:prelimOuterspace}, $\FreeS$ is the simplicial completion of $\CV$ and $L$ is a subposet of $\FreeS$.  Analogously, there is a natural subposet $\FreeSred$ of $\FreeS$ which is the simplicial completion of $\CVred$ and contains $K$.  

Before we state our next theorem, we consider another poset $\FFS$ (defined in \cite{HM:RelativeFree}), whose order complex is called the \emph{complex of free factor systems} (also denoted by $\FFS$).  A \emph{free factor system} of $\free$ is a finite collection of the form $\ffa = \{[A_1], \ldots, [A_k] \}$, where $k> 0$, each $A_i$ is a proper, non-trivial free factor of $\free$, such that there exists a free factorisation $\free = A_1 \ast \cdots \ast A_k \ast F_N$. There is a partial ordering $\sqsubseteq$ on the set of free factor systems given as follows: $\ffa \sqsubseteq \ffa^{\prime}$ if for every $[A_i] \in \ffa$ there exists $[A_j^{\prime}] \in \ffa^{\prime}$ such that $A_i \subseteq A_j^{\prime}$ up to conjugation. This \emph{poset of free factor systems} is denoted by $\FFS$.  The poset $\FreeF$ of free factors is a subposet of $\FFS$. In fact, $\FreeF$ and $\FFS$ are quasi-isometric to each other by \cite[Proposition 6.3]{HM:RelativeFree}. However, Theorem~\ref{thm:FreeFactorComplex} and the following result show that they are not homotopy equivalent. 

\begin{thmAlph}
\label{thm:freefactorsystems}
 $\FreeS \setminus L$ and $\FFS$ are homotopy equivalent. Moreover, for $\rank \geq 2$, they are $(\rank-2)$-connected. 
\end{thmAlph}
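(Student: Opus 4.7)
My plan is to establish the homotopy equivalence by a Quillen fiber argument that uses the contractibility of relative free splitting complexes (the third theorem announced in the abstract) as a black box, and to derive the $(\rank-2)$-connectivity by exploiting the contractibility of $\FreeS$.

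For the homotopy equivalence, I would introduce the order-reversing poset map
\[
\pi : \FreeS \setminus L \longrightarrow \FFS, \qquad T \longmapsto \bigl\{[\Stab_T(v)] : v \in V(T),\ \Stab_T(v) \neq 1\bigr\},
\]
sending a non-Grushko free splitting $T$ to the free factor system formed by the conjugacy classes of its non-trivial vertex stabilizers. By Bass--Serre theory for free splittings (trivial edge stabilizers), this collection is indeed a free factor system, and the hypothesis $T \notin L$ ensures it is non-empty. To see that $\pi$ reverses order, suppose $T \leq T'$ in $\FreeS$, i.e., $T$ is obtained from $T'$ by collapsing some edge orbits. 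Each vertex of $T$ absorbs a connected subtree of $T'$; since the collapsed edges have trivial stabilizers, the stabilizer of such a vertex of $T$ is the free product of the stabilizers of the corresponding vertices of $T'$. Hence every element of $\pi(T')$ lies inside some element of $\pi(T)$, i.e., $\pi(T') \sqsubseteq \pi(T)$.

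To apply Quillen's Theorem~A, fix $\ffa \in \FFS$ and consider the upper fiber
\[
\pi^{-1}\bigl(\FFS_{\sqsupseteq \ffa}\bigr) \;=\; \bigl\{T \in \FreeS \setminus L : \ffa \sqsubseteq \pi(T)\bigr\},
\]
which is precisely the set of free splittings of $\free$ in which every representative of an element of $\ffa$ is elliptic. This is, by definition, the relative free splitting complex of $\free$ with respect to $\ffa$, and by the third theorem announced in the abstract it is contractible for every non-trivial $\ffa$. Quillen's Theorem~A, applied to $\pi$ viewed as an order-preserving map into $\FFS^{\mathrm{op}}$, then yields the desired homotopy equivalence $\FreeS \setminus L \simeq \FFS$.

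For the $(\rank-2)$-connectivity, I would use that $\FreeS$ is contractible (as the simplicial completion of the contractible Outer space $\CV$); by the long exact sequence of the pair $(\FreeS, \FreeS\setminus L)$, it suffices to show that the inclusion $\FreeS \setminus L \hookrightarrow \FreeS$ is $(\rank-1)$-connected. The plan is to prove this by a simplex-by-simplex push-off: given a disk $D$ of dimension $j \leq \rank-1$ mapped into $\FreeS$ with boundary in $\FreeS \setminus L$, iteratively replace the top-dimensional simplices of $D$ meeting $L$ by simplicial chains in $\FreeS \setminus L$ with matching boundary. The main obstacle, and the technical core of the argument, is establishing sufficient local connectivity of the relative links $\mathrm{Lk}_{\FreeS}(\sigma) \cap (\FreeS \setminus L)$ for simplices $\sigma \in L$. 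I would approach this by identifying these links with (relative) free splitting or free factor complexes of smaller-rank free groups and inducting on $\rank$, using the contractibility of relative free splitting complexes from the abstract to handle the base cases.
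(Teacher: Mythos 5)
Your first half, the homotopy equivalence $\FreeS \setminus L \simeq \FFS$, is exactly the argument in the paper's Proposition~\ref{prop:HomotopyEquivalence}: the vertex-group map $S\mapsto \V(S)$, viewed as an order-preserving map into $\FFS^{op}$, has upper fibres equal to the relative free splitting complexes $\FreeS(\ffa)$, which are contractible by Theorem~\ref{thm:FreeSplittingIntroduction}, and Quillen's fibre lemma does the rest.

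The connectivity half, however, has a genuine gap. You want to push a $(\rank-1)$-disk $D\to\FreeS$ with boundary in $\FreeS\setminus L$ off $L$ one simplex at a time, which requires, at the bottom of the induction (a vertex $G\in L$), nullhomotoping a $(\rank-2)$-sphere inside $\operatorname{Lk}_{\FreeS}(G)\cap(\FreeS\setminus L)$. But this relative link is not a free splitting or free factor complex of a smaller-rank group: since any splitting above $G$ in $\FreeS$ still has trivial vertex groups, the relative link is just $\FreeS_{<G}\cap(\FreeS\setminus L)$, which is in order-reversing bijection with the poset $\X(G)$ of proper subgraphs of $G$ with a non-tree component. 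By Proposition~\ref{prop:X(G) homotopy}, when $G$ has no separating edge this poset is a nontrivial wedge of $(\rank-2)$-spheres, hence only $(\rank-3)$-connected. So the push-off fails exactly at the last step for generic (trivalent, no-separating-edge) $G$, and your induction on $\rank$ would not rescue it because the obstruction lives at a fixed rank. The missing idea is precisely what the paper supplies: even though the $(\rank-2)$-spheres in the link of $G$ are not nullhomotopic \emph{in that link}, they become nullhomotopic in the ambient space, because they can be pushed into the star of a blow-up $G_i>G/\mathcal{E}_i$ that has a separating edge, whose link is contractible; this is Lemma~\ref{lem:contractible}, and it feeds into Lemma~\ref{lem:UpConnectivitiyByOne} to bump the connectivity from $\rank-3$ to $\rank-2$. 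The paper packages the fibre analysis through the thickened poset $\mcZ\subset L\times\bFreeS$ rather than through links, but the content is the same, and without the extra ``spheres die upstairs'' step your argument only reaches $(\rank-3)$-connectivity.
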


In order to establish the homotopy equivalence $\FreeS \setminus L \simeq \FFS$, we are led to study relative versions of the free splitting complex: Whenever one has a free splitting $S$ of $\free$, the set of conjugacy classes of non-trivial vertex stabilisers forms a free factor system $\V(S)$. Now for a free factor system $\ffa$ in $\free$, the \emph{poset of free splittings of $\free$ relative to $\ffa$}, denoted $\FreeS(\ffa)$, is the subposet of $\FreeS$ consisting of all free splittings $S\in \FreeS$ such that $\ffa\sqsubseteq \V(S)$. Its realisation is the relative free splitting complex studied in \cite{HM:RelativeFree}. Hatcher \cite{Hatcher:FSContractible} showed that $\FreeS$ is contractible. We extend this result to the relative setting and show:

\begin{thmAlph}
\label{thm:FreeSplittingIntroduction}
For any free factor system $\ffa$ of $\free$, the relative free splitting complex $\FreeS(\ffa)$ is contractible.
\end{thmAlph}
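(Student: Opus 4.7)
The strategy is to exhibit $\FreeS(\ffa)$ as deformation retracting onto the subposet of refinements of a fixed sphere system representing $\ffa$, a subposet that is trivially contractible because it has a minimum element. The retraction will be defined by sphere-system union, and the hard work sits in ensuring that the union stays in $\FreeS(\ffa)$. To set things up, identify $\free$ with $\pi_1(M_\rank)$ for $M_\rank=\#_\rank(S^1\times S^2)$, so that (as in~\cite{Hatcher:FSContractible}) $\FreeS$ is identified with the complex of isotopy classes of essential sphere systems in $M_\rank$. Realize $\ffa=\{[A_1],\ldots,[A_k]\}$ by a sphere system $\Sigma_\ffa\in\FreeS(\ffa)$ whose Bass--Serre decomposition has vertex groups precisely the $A_i$ plus trivial groups carrying the complementary $F_N$ factor; cutting $M_\rank$ along $\Sigma_\ffa$ then yields submanifolds $H_1,\ldots,H_k$ with $\pi_1(H_i)\cong A_i$, together with simply-connected ``free'' pieces.

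The key technical input is a relative normal form: every $\Sigma\in\FreeS(\ffa)$ admits an isotopy representative whose spheres are disjoint from $\Sigma_\ffa$ and avoid the interiors of all the $H_i$. The point is that $\ffa\sqsubseteq\V(\Sigma)$ forces each $A_i$ to be elliptic in the Bass--Serre tree of $\Sigma$, so the submanifold $H_i$ embeds up to isotopy into a single complementary region of $\Sigma$, and a Laudenbach-style normal-form argument then pushes $\Sigma$ out of each $H_i$ and off $\Sigma_\ffa$. Granted this, define $\psi\colon\FreeS(\ffa)\to\FreeS(\ffa)$ by $\psi(\Sigma):=\Sigma\cup\Sigma_\ffa$, taking $\Sigma$ in its normal-form representative. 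Since no sphere of $\Sigma$ enters any $H_i$, the union does not subdivide any $H_i$, so $[A_i]\in\V(\psi(\Sigma))$ for each $i$ and $\psi(\Sigma)\in\FreeS(\ffa)$. The map $\psi$ preserves containment (hence is a poset map), is idempotent with image $\FreeS(\ffa)^{\geq\Sigma_\ffa}:=\{\Sigma'\in\FreeS(\ffa):\Sigma'\geq\Sigma_\ffa\}$, and satisfies $\Sigma\leq\psi(\Sigma)$ for every $\Sigma$. The standard comparison lemma for poset maps (two poset maps comparable pointwise induce homotopic maps of order complexes) therefore yields a homotopy equivalence between $\FreeS(\ffa)$ and $\FreeS(\ffa)^{\geq\Sigma_\ffa}$.

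The subposet $\FreeS(\ffa)^{\geq\Sigma_\ffa}$ has $\Sigma_\ffa$ as its unique minimum, so its order complex is a cone with apex $\Sigma_\ffa$ and is contractible; hence so is $\FreeS(\ffa)$. The principal obstacle in this outline is the relative normal form: the claim that every relative free splitting admits a representative disjoint from $\Sigma_\ffa$ and lying outside all the $H_i$. The verification rests on translating the algebraic ellipticity condition $\ffa\sqsubseteq\V(\Sigma)$ into the topological statement that the submanifolds $H_i$ can be isotoped disjoint from $\Sigma$; once that is in place, the conclusion follows formally from the poset comparison lemma.
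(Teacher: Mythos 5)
Your proposal takes a genuinely different route from the paper: the paper works in the Guirardel--Levitt deformation space and restricts Skora's fold-path retraction to $\real{\FreeS(\ffa)}$, showing continuity in the weak topology, whereas you propose a simplicial retraction onto the star $\FreeS(\ffa)^{\geq\Sigma_\ffa}$ by taking unions of sphere systems. Unfortunately the proposal contains a genuine gap, namely the relative normal-form claim, which is false.

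The claim that every $\Sigma\in\FreeS(\ffa)$ can be isotoped to be disjoint from $\Sigma_\ffa$ is equivalent to saying that $\Sigma$ and $\Sigma_\ffa$ admit a common refinement as free splittings, i.e.\ that $\Sigma$ lies in the star of $\Sigma_\ffa$. But ellipticity of the $A_i$ in the Bass--Serre tree of $\Sigma$ is far weaker than compatibility of $\Sigma$ with $\Sigma_\ffa$. Concretely, take $\rank=2$, $\free_2=\langle a,b\rangle$, $\ffa=\{[\langle a\rangle]\}$, and choose $\Sigma_\ffa$ as you describe (say the two-sphere system whose Bass--Serre graph is a single edge joining a vertex with group $\langle a\rangle$ to a trivially-stabilised vertex carrying a loop labelled $b$). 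The free splitting $\Sigma=\langle a\rangle\ast\langle ab\rangle$ satisfies $[\langle a\rangle]\in\V(\Sigma)$, hence $\Sigma\in\FreeS(\ffa)$; but $\Sigma$ and $\Sigma_\ffa$ have no common refinement, since such a refinement would be a free splitting of $\free_2$ collapsing onto both $\langle a\rangle\ast\langle b\rangle$ and $\langle a\rangle\ast\langle ab\rangle$, and no such splitting exists (one can check directly that no graph-of-groups decomposition of $\free_2$ with one rank-one vertex group admits both as collapses, or, in the sphere picture, that the only essential spheres in $M_2$ disjoint from $\Sigma_\ffa$ give splittings with vertex groups $\emptyset$ or $\{[\langle b\rangle]\}$). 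So this $\Sigma$ cannot be isotoped off $\Sigma_\ffa$, and $\psi$ is not defined on it.

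The error in the argument is the jump from ``each $H_i$ embeds up to isotopy into a single complementary region of $\Sigma$'' to ``$\Sigma$ can be pushed off $\Sigma_\ffa$.'' Even granting the first statement, the spheres of $\Sigma_\ffa$ themselves and the simply-connected pieces of its complement are not controlled by the ellipticity hypothesis and will in general intersect $\Sigma$ essentially. This is the same phenomenon the paper encounters in Remark~5.8: even along a Skora path between two trees in a relative free-splitting complex, intermediate trees can have vertex group systems strictly larger than those at the endpoints, so a retraction onto a single star is the wrong target. A sphere-system proof in the spirit of Hatcher's original argument for $\FreeS$ would instead have to work with minimal-intersection normal form (which allows intersections with the fixed maximal system) and a surgery flow, verifying that the surgery process preserves the condition $\ffa\sqsubseteq\V(\cdot)$; that is a substantially different and more delicate argument than the union map you describe.
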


In \cite{BSV:Bordification}, Bux, Smillie and Vogtmann introduced an equivariant deformation retract of $\CVred$ called \emph{jewel space}, denoted by $\jewel$. They showed that $\jewel$ is homeomorphic to the bordification of Outer space defined by Bestvina and Feighn in \cite{BF:Bordification} and asked what the homotopy type of its boundary $\partial \jewel$ is. We mention the following result of Vogtmann to contrast the currently known results about the connectivity of the simplicial boundaries of unreduced and reduced Outer space and also because our methods for establishing Theorem~\ref{thm:FreeFactorComplex} and Theorem~\ref{thm:freefactorsystems} give an alternate proof (albeit longer) of the $(\rank-3)$-connectivity of $\FreeSred \setminus K$. 

\begin{thmAlph}\cite{V:Private}
\label{thm:BoundaryJewelSpace}
 $\FreeSred \setminus K$ and $\partial \jewel$ are homotopy equivalent. Moreover, for $\rank \geq 3$, they are $(\rank-3)$-connected. 
\end{thmAlph}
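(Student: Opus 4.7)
The plan is to adapt the strategy used for Theorem B to the reduced setting of Outer space, exploiting the analogy between the pairs $(\CV, \FreeS)$ and $(\CVred, \FreeSred)$ and between the boundaries $\FreeS \setminus L$ and $\FreeSred \setminus K$.

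First, I would establish the homotopy equivalence $\FreeSred \setminus K \simeq \partial \jewel$. Since $\jewel$ is equivariantly homeomorphic to the Bestvina--Feighn bordification of $\CVred$ and deformation retracts onto the spine $K$, the boundary $\partial \jewel$ records the limiting behaviour of reduced marked graphs in $\CVred$. The complement $\FreeSred \setminus K$ consists of reduced free splittings with non-trivial vertex stabilisers (the points of $\FreeSred$ not represented by free simplicial actions), which correspond precisely to the limiting configurations compiled into $\partial \jewel$. I would match the natural stratifications of the two spaces via an equivariant map and then use standard homotopy colimit (or regular neighbourhood) arguments to promote this combinatorial correspondence to a homotopy equivalence.

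For the $(\rank-3)$-connectivity, I would introduce the sub-poset $\FFS^r \subseteq \FFS$ of free factor systems arising as the vertex-stabiliser system $\V(S)$ of some reduced free splitting $S$, and consider the order-preserving poset map $\V \colon \FreeSred \setminus K \to \FFS^r$. Applying Quillen's fibre theorem, the fibre over $\ffa \in \FFS^r$ is (up to homotopy) the sub-complex of the relative free splitting complex $\FreeS(\ffa)$ consisting of those splittings that are reduced as free splittings of $\free$. A reduced variant of Theorem~\ref{thm:FreeSplittingIntroduction}, asserting contractibility of this sub-complex, would then give $\FreeSred \setminus K \simeq \FFS^r$. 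To finish, I would bound the connectivity of $\FFS^r$ by comparing it with $\FFS$, which is $(\rank-2)$-connected by Theorem~\ref{thm:freefactorsystems}. The inclusion $\FFS^r \hookrightarrow \FFS$ omits exactly those free factor systems that can only be realised by non-reduced splittings; these sit at the ``top'' of $\FFS$ and their removal should drop the connectivity by precisely one, yielding the desired $(\rank-3)$-connectivity.

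The main obstacle will be the fibre analysis, i.e.\ proving the reduced analogue of Theorem~\ref{thm:FreeSplittingIntroduction}. Reducedness is a global condition---it requires that no edge can be collapsed without changing the free factor system of vertex groups---and does not inherit cleanly from $\FreeS(\ffa)$ to a nice subposet. Unlike the unreduced case, a naive attempt to mimic Hatcher's argument breaks down because the blow-up/collapse operations used to contract $\FreeS(\ffa)$ may leave the reduced locus. Adapting these moves while preserving reducedness, and ensuring that one does not accidentally decrease the dimension beyond what is allowed, will require the most delicate combinatorial bookkeeping and is the step expected to lengthen the proof relative to Vogtmann's direct argument.
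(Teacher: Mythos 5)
Your route is genuinely different from the paper's, and as written it contains two gaps---one you flag, one you don't. You propose $\FreeSred\setminus K\simeq \FFS^r$ via the vertex-group-system map and Quillen's fibre lemma, which requires contractibility of $\FreeS(\ffa)\cap\FreeSred$ for each relevant $\ffa$. You correctly identify this as the hard step: the Skora-path machinery behind Theorem~\ref{thm:FreeSplittingIntroduction} does not obviously preserve reducedness (indeed Remark~\ref{rem:counterexample} already shows Skora paths escaping weaker combinatorial constraints), and the paper never proves nor needs such a reduced analogue. The second gap is the final step: from the correct observation that $\FFS^r$ is $\FFS$ with its maximal elements removed (the free factor systems not realised, even up to $\sqsubseteq$, by any reduced splitting are precisely those of corank zero, which are the maximal elements of $\FFS$), you assert that connectivity drops by exactly one. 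Removing the maximal elements of a poset does not in general change connectivity by a predictable amount; a genuine argument would have to control the links $(\FFS)_{<\ffa}$ of the removed vertices, and none is supplied. Since the goal is precisely $(\rank-3)$-connectivity of $\FreeSred\setminus K\simeq\FFS^r$, this step is near-circular without an independent argument for $\FFS^r$.

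The paper sidesteps both issues by never projecting to a reduced version of $\FFS$. In Section~\ref{sec:productspace} it uses the thickened poset $\mcZ^r\subset K\times\bFreeSred$: the fibres of $r_2\colon\mcZ^r\to\bFreeSred$ are deformation retracts of the unreduced fibres of $p_2$ (Propositions~\ref{prop:p_2^-1(S)FFS} and~\ref{prop:p_2^-1(S) to q_2^-1(S) and r_2^-1(S)}), hence contractible, so $\mcZ^r\simeq\bFreeSred$; and the fibres of $r_1\colon\mcZ^r\to K$ agree with those of $p_1$ (as $K$ is downwards-closed in $L$) and are homotopy equivalent to the core-subgraph posets $\Core(G)$, which are $(\rank-3)$-connected by Proposition~\ref{prop:X(G) homotopy}. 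Since $K$ is contractible, Lemma~\ref{lem:QuillenFibres} gives the connectivity. Vogtmann's short alternative recorded in Section~\ref{sec:spheresystems} is even cleaner: in the sphere-system model, any system of at most $\rank-1$ spheres yields a splitting with a non-trivial vertex stabiliser, so the $(\rank-2)$-skeleton of the contractible sphere complex lies entirely inside $\bFreeSred$, immediately killing $\pi_i$ for $i\leq\rank-3$. Either approach avoids the delicate reduced fibre analysis you anticipate.
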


The partial results in Theorem~\ref{thm:freefactorsystems} and \ref{thm:BoundaryJewelSpace} lead to the following question. 
\begin{question} 
\label{qes:HomotopyType?}
What are the homotopy types of $\FreeS \setminus L$, $\FFS$ and $\FreeSred \setminus K$? 
\end{question}
Unfortunately, we cannot answer this. The main difficulty we are faced with is that $\FFS$ is $(2 \rank-3)$-dimensional \cite[Proposition 6.1]{HM:RelativeFree} and our method cannot be pushed to get higher connectivity results or lower the dimension. Note that the curve complex of a closed surface of genus $g$ is $(3g-4)$-dimensional but nevertheless it is homotopy equivalent to a wedge of spheres of dimension $2g-2$. For further comments on this, see Section~\ref{sec:CV_2}.

\subsection*{Methods of proof}
Various methods have been used to determine the homotopy type of some of the complexes mentioned in this introduction: Shelling orders for $\Delta(n, \mathbb{Q})$, flow arguments for $\FreeS$, spectral sequences for Hatcher--Vogtmann's complex of free factors, Morse theory for the curve complex.
In this paper, we view all our simplicial complexes as order complexes of posets and use various Quillen type fibre lemmas (see Section~\ref{sec:posetprelim} for details) to get the desired results. In particular, the following poset version of the Vietoris--Begle theorem (see \cite[Corollary 2.4]{GV:VietorisBegle}) is the main tool we use.

\begin{restatable*}[{\cite[Proposition 1.6 \& 7.6]{Q:HomotopyProperties}}]{lemma}{QuillenFibres}
\label{lem:QuillenFibres}
Let $f:P\to Q$ be a poset map.
\begin{enumerate}
\item If for all $x\in Q$, the fibre $f^{-1}(Q_{\leq x})$ is contractible, then $f$ induces a homotopy equivalence on geometric realisations.
\item If for all $x\in Q$, the fibre $f^{-1}(Q_{\leq x})$ is $n$-connected , then $P$ is $n$-connected if and only if $Q$ is $n$-connected.
\end{enumerate}
\end{restatable*}

\subsection*{Paper Outline} In Section~\ref{sec:posetprelim}, we set the notation for posets, state the various fibre lemmas and mention some results from algebraic topology which will be used later. In Section~\ref{sec:prelimOuterspace}, we define (un-)\ reduced Culler--Vogtmann Outer space, its spine and the free splitting complex. We also explain the relationship between these spaces. Section~\ref{sec:graphposet} can be read independently of the rest of the paper. It establishes the homotopy type of posets of certain subgraphs of a fixed graph. In Section~\ref{sec:freesplittingposets}, we show contractibility of the relative free splitting complexes (Theorem~\ref{thm:FreeSplittingIntroduction}). This result is used in Section~\ref{sec:homeq} to prove the homotopy equivalence of $\FreeS \setminus L$ and $\FFS$ (the first statement of Theorem~\ref{thm:freefactorsystems}). Also in Section~\ref{sec:homeq}, we show that $\FreeF$ is homotopy equivalent to the subposet of $\FreeS$, denoted $\FreeSone$, given by free splittings with exactly one non-trivial vertex group. Finally, in Section~\ref{sec:productspace}, we prove Theorem~\ref{thm:freefactorsystems}, Theorem~\ref{thm:FreeFactorComplex} and the second statement of Theorem~\ref{thm:BoundaryJewelSpace}. We close this article in Section~\ref{sec:remarks} with some remarks concerning the complex of sphere systems and relative versions of our results and give an illustration of our considerations in the case where $\rank=2$.

\subsection*{Proof outline for Theorem~\ref{thm:FreeFactorComplex}} We now describe a brief outline for the proof of Theorem~\ref{thm:FreeFactorComplex}, which also sheds some light on the structure of the paper. See Section~\ref{sec:productspace} for the detailed proof. We first establish in Proposition~\ref{prop:HomotopyEquivalence} that $\FreeSone$ and $\FreeF$ are homotopy equivalent. Consider the pair of posets $(X, Y)$ where $X = L$ and $Y = \FreeSone$. In Section~\ref{sec:productspace}, we define a particular subposet $\mcZ$ of $X \times Y$ with projection maps $p_1 \colon \mcZ \to X$ and $p_2 \colon \mcZ \to Y$. We then show that the fibres of the map $p_2$ (in the sense of Lemma~\ref{lem:QuillenFibres}) are contractible and the fibres of $p_1$ are given by posets of subgraphs which are $(\rank-3)$-connected. Applying Lemma~\ref{lem:QuillenFibres} twice then gives us that $\FreeSone$, equivalently $\FreeF$, is $(\rank-3)$-connected. Since $\FreeF$ is $(\rank-2)$-dimensional, we obtain the desired result.

For the proof of the second statement of Theorem~\ref{thm:freefactorsystems} (resp. Theorem~\ref{thm:BoundaryJewelSpace}), we consider the pair $(X, Y) = (L, \FreeS \setminus L)$ (resp. $(X, Y) = (K, \FreeSred \setminus K)$).

\subsection*{Acknowledgements} This project has benefited greatly from the discussions the second author had with Mladen Bestvina while she was a graduate student under his supervision at the University of Utah. We would like to thank Thomas Goller, Nir Lazarovich and Derrick Wigglesworth for helpful conversations. We are also grateful to Karen Vogtmann for enlightening conversations and thank Kai-Uwe Bux for reading drafts of this paper and in particular for helping to improve the exposition. We thank the organizers of the conference `Geometry of outer space and outer automorphism groups', Warwick 2018 which gave us the opportunity to start this collaboration.
We would like to thank the anonymous referees for useful comments that helped to improve the article; in particular, these allowed to significantly shorten the proof of Theorem \ref{thm:FreeSplittingIntroduction}.

The first-named author was supported by the grant BU 1224/2-1 within the Priority Programme 2026 ``Geometry at infinity'' of the German Science Foundation (DFG).
The second-named author was supported by the Israel Science Foundation (grant 1026/15).

\section{Preliminaries on poset topology}\label{sec:posetprelim}
\label{sec:prelim_poset_topology}
Let $P=(P,\leq)$ be a poset (partially ordered set). If $x\in P$, the sets $P_{\leq x}$ and $P_{\geq x}$ are defined by
\begin{align*}
P_{\leq x}\coloneqq\set[y\in P\mid y\leq x],&& P_{\geq x} \coloneqq\set[y\in P\mid y\geq x].
\end{align*}
A \emph{chain of length $l$} in $P$ is a totally ordered subset $x_0<x_1<\ldots <x_l$.
For each poset $P=(P,\leq)$, one has an associated simplicial complex $\Delta(P)$ called the \emph{order complex} of $P$. Its vertices are the elements of $P$ and higher dimensional simplices are given by the chains of $P$. When we speak about the \emph{realisation of the poset $P$}, we mean the geometric realisations of its order complex and denote this space by $\real{ P } \coloneqq \real{ \Delta(P) }$.
With an abuse of notation, we will attribute topological properties (e.g. homotopy groups and connectivity properties) to a poset when we mean that its realisation has these properties.

A map $f:P\to Q$ between two posets is called a \emph{poset map} if $x\leq y$ implies $f(x)\leq f(y)$. Such a poset map induces a simplicial map from $\Delta(P)$ to $\Delta(Q)$ and hence a continuous map on the realisations of the posets. It will be denoted by $\real{ f }$ or just by $f$ if what is meant is clear from the context.

The \emph{direct product} $P \times Q$ of two posets $P$ and $Q$ is the poset whose underlying set is the Cartesian product $\{(p,q) \mid p \in P,\, q \in Q\}$ and whose order relation is given by
$$(p,q) \leq_{P \times Q} (p',q') \text{ if } p \leq_P p' \text{ and } q \leq_Q q'. $$

\subsection{Fibre theorems}
An important tool to study the topology of posets is given by so called fibre lemmas comparing the connectivity properties of posets $P$ and $Q$ by analysing the fibres of a poset map between them. The first such fibre theorem appeared in {\cite[Theorem A]{Q:HigherAlgebraic}} and is know as Quillen's fibre lemma. For this text, we need the following version of it:

\QuillenFibres

For a poset $P=(P,\leq)$, let $P^{op}=(P,\leq_{op})$ be the poset defined by $x\leq_{op} y :\Leftrightarrow y\leq x$.
Using the fact that one has a natural identification $\Delta(P)\cong \Delta(P^{op})$, one can draw the same conclusion as in the last lemma if one shows that $f^{-1}(Q_{\geq x})$ is $k$-connected for all $x\in Q$.

Another result that we will frequently use is:

\begin{lemma}[\cite{Q:HomotopyProperties}]
\label{lem:HomotopicPosetMaps}
If $f,g:P\to Q$ are poset maps such that $f(x)\leq g(x)$ for all $x\in P$, then they induce homotopic maps $\real{ f},\, \real{ g}$ on geometric realisations. In particular, if $f:P\to P$ is \emph{monotone}, i.e. $f(x)\leq x$ for all $x\in P$ or $f(x)\geq x$ for all $x\in P$, then $\real{ f}$ is homotopic to the identity.
\end{lemma}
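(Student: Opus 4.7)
The plan is to build a combinatorial homotopy using the product of $P$ with the two-element chain. Let $\mathbf{2}$ denote the poset $\{0<1\}$, so that $\|\mathbf{2}\|=[0,1]$. The key ingredient is the classical fact that the order complex of a product of posets is canonically a triangulation of the product of the order complexes; in particular, $\|P\times \mathbf{2}\|$ is naturally homeomorphic to $\|P\|\times[0,1]$, with the slice at $0$ given by $\|P\times\{0\}\|=\|P\|$ and the slice at $1$ by $\|P\times\{1\}\|=\|P\|$.

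Given this, I would define a map $H\colon P\times \mathbf{2}\to Q$ by
\[
H(x,0)=f(x),\qquad H(x,1)=g(x),
\]
and verify that $H$ is a poset map. The only nontrivial case is a relation of the form $(x,0)\leq(y,1)$ with $x\leq y$ in $P$; then
\[
H(x,0)=f(x)\leq g(x)\leq g(y)=H(y,1),
\]
using the hypothesis $f(x)\leq g(x)$ for the first inequality and the fact that $g$ is a poset map for the second. The other two cases ($i=j=0$ and $i=j=1$) follow immediately from $f$ and $g$ being poset maps. Taking realisations, $\|H\|\colon \|P\|\times[0,1]\to \|Q\|$ is then a continuous map which restricts to $\|f\|$ on $\|P\|\times\{0\}$ and to $\|g\|$ on $\|P\|\times\{1\}$, so it is the required homotopy between $\|f\|$ and $\|g\|$.

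The second assertion is an immediate consequence: if $f\colon P\to P$ satisfies $f(x)\leq x$ for all $x$, apply the first part to the pair $(f,\op{id}_P)$ to conclude $\|f\|\simeq \op{id}_{\|P\|}$; the case $f(x)\geq x$ is symmetric, obtained by applying the first part to $(\op{id}_P,f)$.

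The only genuine point to be careful about is the identification $\|P\times\mathbf{2}\|\cong \|P\|\times[0,1]$, which should be recalled (or cited) from standard poset-topology references. Once that identification is in hand, the construction of $H$ is the whole content of the argument, and I do not foresee any real obstacle.
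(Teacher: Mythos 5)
Your proof is correct and is the standard argument for this result (essentially Quillen's original one). Note that the paper does not prove this lemma at all --- it simply cites it from Quillen's \emph{Homotopy properties of the poset of nontrivial $p$-subgroups of a group} --- so there is no in-paper proof to compare against. The one point you flagged, the identification $\|P\times\mathbf{2}\|\cong\|P\|\times[0,1]$, is indeed a standard fact (the nerve of a product of posets triangulates the product of the realisations; since $\mathbf{2}$ is finite there are no point-set subtleties), and once it is in place the rest of the argument, including the case analysis showing $H$ is order-preserving, is complete and correct.
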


Usually, the connectivity results one can obtain using fibre lemmas is bounded above by the degree of connectivity of the fibre. The following lemma gives a sufficient condition for obtaining a slightly better degree of connectivity. 
We will make use of it in Section~\ref{sec:freefactorsystems}. 

\begin{lemma}\label{lem:UpConnectivitiyByOne} Let $f \colon P \to Q$ be a poset map where $Q$ is $(k+1)$-connected. Assume that for all $q \in Q$, the fibre $f^{-1}(Q_{\leq q})$ is $k$-connected and the map $g_{\ast} \colon \pi_{k+1}(f^{-1}(Q_{\leq q})) \to \pi_{k+1}(P)$ induced by the inclusion $g \colon f^{-1}(Q_{\leq q}) \hookrightarrow P$ is trivial. Then $P$ is $(k+1)$-connected. \end{lemma}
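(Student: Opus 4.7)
The plan is to reduce $(k+1)$-connectivity of $P$ to showing $\pi_{k+1}(P) = 0$, and then to kill an arbitrary class in $\pi_{k+1}(P)$ by an obstruction-theoretic extension argument that lifts a null-homotopy from $|Q|$ back to $|P|$ skeleton by skeleton. The $k$-connectivity of $P$ is immediate from Lemma~\ref{lem:QuillenFibres}(2): the fibres are $k$-connected and $Q$ is $(k+1)$-connected, hence in particular $k$-connected. So everything reduces to showing that any given $\alpha \colon S^{k+1} \to |P|$ is null-homotopic.

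After replacing $\alpha$ by a simplicial approximation, I would extend $f \circ \alpha$ to a map $D^{k+2} \to |Q|$ using $(k+1)$-connectivity of $Q$, and then (after further subdivision if necessary) realise this extension as a simplicial map $H \colon |K| \to |Q|$ for some triangulation $K$ of $D^{k+2}$ restricting to the given one on $\partial D^{k+2}$. For each simplex $\sigma$ of $K$, write $q_\sigma$ for the maximum of the chain $H(\sigma) \subseteq Q$; note that $\sigma' \subseteq \sigma$ implies $f^{-1}(Q_{\leq q_{\sigma'}}) \subseteq f^{-1}(Q_{\leq q_\sigma})$, which is what allows the inductive bookkeeping below.

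Now I would construct a continuous extension $\tilde H \colon D^{k+2} \to |P|$ of $\alpha$ by induction on the skeleta of $K$, maintaining the side condition $\tilde H(\sigma) \subseteq |f^{-1}(Q_{\leq q_\sigma})|$ for every simplex $\sigma$ of dimension at most $k+1$. On the boundary this already holds because $\alpha$ is simplicial and $f$ is a poset map. For interior vertices I would pick any point of the non-empty fibre $f^{-1}(Q_{\leq H(v)})$. For an $i$-simplex $\sigma$ with $1 \leq i \leq k+1$, the inductively defined restriction $\tilde H|_{\partial \sigma} \colon S^{i-1} \to |f^{-1}(Q_{\leq q_\sigma})|$ lands in a $k$-connected space with $i-1 \leq k$, so it extends across $\sigma$ inside the fibre.

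The decisive step concerns the $(k+2)$-dimensional simplices, where the extension cannot be carried out inside the fibre since $f^{-1}(Q_{\leq q_\sigma})$ is only $k$-connected. This is precisely where the triviality hypothesis on $g_\ast$ is used: the already-defined $\tilde H|_{\partial \sigma} \colon S^{k+1} \to |f^{-1}(Q_{\leq q_\sigma})|$ represents a class whose image under $g_\ast$ in $\pi_{k+1}(|P|)$ is zero, so the composite with the inclusion into $|P|$ is null-homotopic in $|P|$ and therefore extends over $\sigma$ as a map into $|P|$. Assembling these pieces yields the desired $\tilde H$, so $[\alpha] = 0$ in $\pi_{k+1}(P)$. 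The main technical obstacle is the bookkeeping needed to make $\alpha$, $H$, and the fibre-valued inductive lift all compatible under one triangulation; once this is set up correctly, the decisive content of the argument is exactly the single input from the hypothesis on $g_\ast$ at top dimension, and basepoint issues are harmless since $P$ is already path-connected.
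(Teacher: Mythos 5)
Your proposal is correct and follows essentially the same approach as the paper's proof: reduce to showing $\pi_{k+1}(P)=0$, simplicially approximate, extend $f\circ\alpha$ over $B^{k+2}$ in $|Q|$ using $(k+1)$-connectivity of $Q$, lift skeleton by skeleton keeping each $(\leq k+1)$-simplex inside the fibre over its maximum, and use the triviality of $g_*$ only at the $(k+2)$-cells. The bookkeeping and the single point where the $g_*$ hypothesis enters are exactly as in the paper.
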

\begin{proof}
Applying Lemma~\ref{lem:QuillenFibres}, one gets that $P$ is $k$-connected.

We now show that $\pi_{k+1}(P)$ also vanishes, which implies that $P$ is in fact $(k+1)$-connected. Consider a map $i \colon S^{k+1} \to \real{P}$ from the $(k+1)$-sphere to $P$. Using simplicial approximation (see e.g. \cite[Chapter 3.4]{Spanier:AlgebraicTopology}) we can (after possibly precomposing with a homotopy) assume that $i$ is simplicial with respect to a simplicial structure $\tau$ on $S^{k+1}$. We wish to show that $i$ extends to a map $\hat{i} \colon B^{k+2} \to \real{P}$, where $B^{k+2}$ is the $(k+2)$-ball and $\hat{i}|_{\partial B^{k+2}} = i$. 

Consider the simplicial map $h:= f \circ i \colon S^{k+1} \to \real{Q}$. Since $Q$ is $(k+1)$-connected, it extends to a map $\hat{h} \colon B^{k+2} \to \real{Q}$ such that $\hat{h}|_{\partial B^{k+2}} = h$. 
Simplicial approximation applied to the pair $(B^{k+2}, S^{k+1})$ allows us to assume that $\hat{h}$ is simplicial with respect to a simplicial structure $\tau'$ on $B^{k+2}$ such that $\tau'$ agrees with $\tau$ on $\partial B^{k+2}= S^{k+1}$. For this, we might need to do barycentric subdivision and replace $i$ by a homotopic map again.
We now show that $\hat{h}$ lifts to a map $\tilde{h} \colon B^{k+2} \to \real{P}$ such that $\tilde{h}|_{\partial B^{k+2}} = i$ by defining $\hat{h}$ inductively on the simplices of $\tau'$. 

We do induction on the skeleta of $\tau'$. To start, let $v$ be a vertex of $\tau'$. If $v \in \tau$, then $\tilde{h}(v) \coloneqq i(v)$; otherwise set $\tilde{h}(v)$ to be any vertex in $f^{-1}(\hat{h}(v))$. 
Now assume that for $m \leq k+1$, the map $\tilde{h}$ has been defined on the $(m-1)$-skeleton such that $\tilde{h}$ restricts to $i$ on $\tau$ and for every $(m-1)$-simplex $\sigma_{m-1}$ of $\tau'$, we have
\begin{equation*}
\tilde{h}(\sigma_{m-1}) \subseteq \real{f^{-1}(Q_{\leq \max{\sigma_{m-1}}})},
\end{equation*}
where $\max{\sigma_{m-1}}$ is the largest vertex in $\hat{h}(\sigma_{m-1})$. Let $\sigma_m$ be an $m$-simplex of $\tau'$. Clearly, we have $\max \sigma_m \geq \max\sigma_{m-1}$ for every $(m-1)$-face $\sigma_{m-1}$ of $\sigma_m$. Hence, $\tilde{h}(\partial \sigma_m) \subseteq f^{-1}(Q_{\leq \max \sigma_m})$ and by assumption, $f^{-1}(Q_{\leq \max \sigma_m})$ is $k$-connected. Thus $\tilde{h}$ extends to $\sigma_m$ such that $\tilde{h}(\sigma_m) \subseteq f^{-1}(Q_{\leq \max \sigma_m})$. Finally, for a $(k+2)$-simplex $\sigma$, we have $\tilde{h}(\partial \sigma) \subseteq f^{-1}(Q_{\leq \max\sigma})$. Since the image $g_{\ast}(\pi_{k+1}(f^{-1}(Q_{\leq \max\sigma})))$ in $\pi_{k+1}(P)$ is trivial, the map $\tilde{h}$ extends to $\sigma$. 
Thus we have shown that $P$ is $(k+1)$-connected. 
\end{proof}

\subsection{The nerve of a covering}
The \emph{nerve} of a family of sets $(X_i)_{i\in I}$ is the simplicial complex $\mathcal{N}(X_i)_{i\in I}$ that has vertex set $I$ and where a finite subset $\sigma\subseteq I$ forms a simplex if and only if $\bigcap_{i\in\sigma} X_i\not=\emptyset$.
The \emph{Nerve Theorem} is another standard tool which exists in various versions. For simplicial complexes, it can be stated as follows:
\begin{lemma}[{\cite[Theorem 10.6]{Bjorner:TopologicalMethods}}]
\label{lem:NerveTheorem}
Let $X$ be a simplicial complex and $(X_i)_{i\in I}$ a family of subcomplexes such that $X=\bigcup_{i\in I} X_i$. Suppose that every non-empty finite intersection $X_{i_1}\cap \ldots \cap X_{i_k}$ is contractible. Then $X$ is homotopy equivalent to the nerve $\mathcal{N}((X_i)_{i\in I})$.
\end{lemma}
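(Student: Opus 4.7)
My plan is to apply Quillen's fibre lemma (Lemma~\ref{lem:QuillenFibres}) twice via an auxiliary \emph{blow-up} poset $P$ that projects to the face posets of both $X$ and of the nerve $\mathcal{N} := \mathcal{N}((X_i)_{i \in I})$. Specifically, I would introduce
\[
P := \{(\sigma,\tau) : \sigma \in \mathcal{F}(X),\ \tau \in \mathcal{F}(\mathcal{N}),\ \sigma \subseteq \textstyle\bigcap_{i \in \tau} X_i\},
\]
where $\mathcal{F}(\cdot)$ denotes the face poset, ordered componentwise by inclusion in each coordinate, and show that both projections $p_1 \colon P \to \mathcal{F}(X)$ and $p_2 \colon P \to \mathcal{F}(\mathcal{N})$ induce homotopy equivalences. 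This will give $\real{X} \simeq \real{\mathcal{F}(X)} \simeq \real{P} \simeq \real{\mathcal{F}(\mathcal{N})} \simeq \real{\mathcal{N}}$ as desired.

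For $p_2$: fix $\tau_0 \in \mathcal{F}(\mathcal{N})$ and set $Y := \bigcap_{i \in \tau_0} X_i$, which is contractible by hypothesis. The fibre $p_2^{-1}(\mathcal{F}(\mathcal{N})_{\geq \tau_0}) = \{(\sigma,\tau) \in P : \tau \supseteq \tau_0\}$ consists of pairs with $\sigma \subseteq \bigcap_{i \in \tau} X_i \subseteq Y$, so I would use the monotone self-map $(\sigma,\tau) \mapsto (\sigma,\tau_0)$, which is $\leq$ the identity, together with Lemma~\ref{lem:HomotopicPosetMaps} to conclude that the fibre retracts onto the subposet $\{(\sigma,\tau_0) : \sigma \in \mathcal{F}(Y)\} \cong \mathcal{F}(Y)$. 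Its realisation is $\real{Y}$, which is contractible.

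For $p_1$: fix a simplex $\sigma_0$ of $X$ and let $\tau^* := \{i \in I : \sigma_0 \subseteq X_i\}$, a non-empty simplex of $\mathcal{N}$ since the $X_i$ are subcomplexes covering $X$. Every element $(\sigma,\tau)$ of the fibre $p_1^{-1}(\mathcal{F}(X)_{\geq \sigma_0}) = \{(\sigma,\tau) \in P : \sigma \supseteq \sigma_0\}$ automatically satisfies $\tau \subseteq \tau^*$, because $\sigma_0 \subseteq \sigma \subseteq \bigcap_{i \in \tau} X_i$ forces each $i \in \tau$ to contain $\sigma_0$. I would then chain two monotone self-maps of this fibre, $(\sigma,\tau) \mapsto (\sigma_0,\tau) \mapsto (\sigma_0,\tau^*)$, the first of which lies $\leq$ the identity and the second of which lies $\geq$ its input. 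Two applications of Lemma~\ref{lem:HomotopicPosetMaps} then show that the identity is homotopic to the constant map at $(\sigma_0,\tau^*)$, so the fibre is contractible.

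Combining both computations via Lemma~\ref{lem:QuillenFibres} in its $\geq$-variant (as remarked in the paper just after that lemma) gives the desired homotopy equivalence $\real{X} \simeq \real{\mathcal{N}}$. The main conceptual step is guessing the right blow-up poset $P$ that simultaneously encodes the combinatorics of the cover and of the complex; once $P$ is in place, the fibre arguments are essentially forced, and the only genuine geometric input---contractibility of the intersections---enters exactly in the analysis of the fibres of $p_2$.
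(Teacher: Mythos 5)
The paper itself gives no proof of this lemma; it simply cites Bj\"orner \cite{Bjorner:TopologicalMethods}, so you are supplying the argument rather than reproducing one. Your approach---a blow-up poset $P$ sandwiched between the face posets of $X$ and of the nerve, with both projections analysed via Quillen's fibre lemma---is a clean and standard poset-theoretic route to the Nerve Theorem, and the overall structure of your argument is sound.

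There is, however, a genuine gap in your analysis of $p_1$. You set $\tau^* := \{i\in I : \sigma_0 \subseteq X_i\}$ and cone the fibre off at $(\sigma_0,\tau^*)$; for this you need $\tau^*$ to be a simplex of $\mathcal{N}$, hence a \emph{finite} subset of $I$ (simplices of the nerve are, by the paper's definition, finite). When $I$ is infinite this can fail: a fixed simplex $\sigma_0$ may lie in infinitely many of the $X_i$. This is exactly the situation in the paper's one application of the Nerve Theorem, Theorem~\ref{thm:contractibilty of FreeSone}, where the cover of $\FreeSone([A])$ is indexed by all chains of free factors containing $A$ and a given simplex is contained in infinitely many of the sets $X(A,A_1,\ldots,A_l:\free)$. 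The fix is short: after your first retraction $(\sigma,\tau)\mapsto(\sigma_0,\tau)$, the image is isomorphic to the poset of non-empty finite subsets of $\tau^*$, and you can cone this off at any singleton $\{i_0\}$ with $i_0\in\tau^*$ via the pair of monotone maps $\tau\mapsto\tau\cup\{i_0\}\mapsto\{i_0\}$, the first lying $\geq$ the identity and the second $\leq$ the identity; both stay inside the fibre because $\tau\cup\{i_0\}$ is finite and $\sigma_0\subseteq\bigcap_{i\in\tau\cup\{i_0\}}X_i\neq\emptyset$. You should also spell out why $\tau^*\neq\emptyset$: that every simplex of $X$ lies entirely in some $X_i$ uses that the $X_i$ are \emph{subcomplexes} covering $X$ (a point of $\inter(\sigma_0)$ lies in some $X_i$, and disjointness of open simplices then forces $\sigma_0\subseteq X_i$); you gesture at this, but it deserves a sentence.
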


\subsection{Alexander duality and the Whitehead theorem}
Alexander duality allows one to compute homology groups of compact subspaces of spheres by looking at the homology of their complement. We will need the following poset version of it which is due to Stanley.
\begin{lemma}[\cite{S:SomeAspects}, {\cite[Theorem 5.1.1]{W:PosetTopology}}]
\label{lem:AlexanderDuality}
Let $P$ be a poset such that $\real{P}$ is homeomorphic to an $n$-sphere and let $Q\subset P$ be a subposet. Then for all $i$, one has
\begin{equation*}
\tilde{H}_i(\real{Q};\mathbb{Z})\cong \tilde{H}^{n-i-1}(\real{P\bsl Q};\mathbb{Z}).
\end{equation*}
\end{lemma}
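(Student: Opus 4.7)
The plan is to reduce the statement to the classical Alexander duality theorem for compact subspaces of the sphere. Since $\real{P}\cong S^n$, the subcomplex $\real{Q}\subseteq \real{P}$ is a compact ANR, so the classical theorem applies and immediately gives
\begin{equation*}
\tilde{H}_i(\real{Q};\mathbb{Z}) \cong \tilde{H}^{n-i-1}(\real{P}\bsl \real{Q};\mathbb{Z}).
\end{equation*}
The whole task thus reduces to proving the homotopy equivalence $\real{P}\bsl \real{Q}\simeq \real{P\bsl Q}$ between the open complement of $\real{Q}$ inside the sphere and the order complex of the complementary subposet.

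I would establish this equivalence by constructing an explicit deformation retraction. First, $\real{P\bsl Q}$ sits inside $\real{P}$ as a subcomplex contained in $\real{P}\bsl \real{Q}$, since any open simplex coming from a chain in $P\bsl Q$ has vertex set disjoint from $Q$. For a point $p\in \real{P}\bsl \real{Q}$, let $c(p)\subseteq P$ be the unique chain such that $p$ lies in the corresponding open simplex, and write $p = \sum_{x_i\in c(p)} t_i\, x_i$ in barycentric coordinates. The condition $p\notin \real{Q}$ is precisely $c(p)\cap (P\bsl Q)\neq \emptyset$, so setting $s := \sum_{x_i\in c(p)\cap (P\bsl Q)} t_i > 0$, the rescaling
\begin{equation*}
r(p) \,:=\, \frac{1}{s}\sum_{x_i\in c(p)\cap (P\bsl Q)} t_i\, x_i
\end{equation*}
lies in the face spanned by $c(p)\cap (P\bsl Q)$, which is a simplex of $\real{P\bsl Q}$. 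The straight-line homotopy $H(p,u) := (1-u)p + u\cdot r(p)$ stays inside the closed simplex of $c(p)$; for $u<1$ every coordinate of $c(p)$ remains strictly positive, so $H(p,u)\in \real{P}\bsl \real{Q}$, and at $u=1$ the $Q$-coordinates vanish, placing $r(p)$ in $\real{P\bsl Q}$. By construction, $r$ fixes $\real{P\bsl Q}$ pointwise.

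The main obstacle is verifying continuity of $r$ (equivalently, of $H$) across the boundaries between open simplices of $\real{P}$. If $p_n\to p$ in $\real{P}\bsl \real{Q}$, then, by the weak topology on the simplicial complex, for large $n$ the chain $c(p_n)$ contains $c(p)$ as a subchain with the barycentric coordinates of the extra vertices tending to $0$. A direct comparison of the formulas defining $r(p_n)$ and $r(p)$ then shows that these extra coordinates vanish in the limit and the remaining ratios converge, so $r(p_n)\to r(p)$. Once continuity is established, combining the resulting homotopy equivalence with the classical Alexander duality isomorphism above yields the stated isomorphism $\tilde{H}_i(\real{Q};\mathbb{Z})\cong \tilde{H}^{n-i-1}(\real{P\bsl Q};\mathbb{Z})$.
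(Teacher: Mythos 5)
The paper does not prove this lemma itself; it cites it from Stanley and from Wachs's survey. So your proposal can only be compared against the standard proof in the literature, and the strategy you chose is exactly that one: apply classical Alexander duality for compact subspaces of $S^n$, then identify the open complement $\real{P}\setminus\real{Q}$ with $\real{P\setminus Q}$ up to homotopy via a linear deformation retraction. Both $\Delta(Q)$ and $\Delta(P\setminus Q)$ are \emph{full} subcomplexes of $\Delta(P)$ (a chain of $P$ lying entirely in $Q$ is a chain of $Q$, and likewise for $P\setminus Q$), and your map $r$ is precisely the standard ``push off a full subcomplex'' retraction. The continuity discussion is informal but sound: since $\real{P}\cong S^n$, the complex $\Delta(P)$ is finite, so the weak topology is metric, and $r$ is given on each closed simplex by a formula continuous in the barycentric coordinates, consistent across faces.

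The one genuine slip is in your opening display. The classical theorem (Hatcher, Theorem~3.44) for a compact, locally contractible proper subspace $K\subsetneq S^n$ reads $\tilde{H}_i(S^n\setminus K;\mathbb{Z})\cong\tilde{H}^{n-i-1}(K;\mathbb{Z})$; it does not directly give $\tilde{H}_i(K)\cong\tilde{H}^{n-i-1}(S^n\setminus K)$, and one cannot simply swap homology and cohomology on a space (the universal coefficient theorem shifts torsion by a degree). With $K=\real{Q}$, your retraction therefore yields $\tilde{H}_i(\real{P\setminus Q})\cong\tilde{H}^{n-i-1}(\real{Q})$, which is the transposed statement rather than the lemma. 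The fix is minor: apply the classical duality to the \emph{other} compact subcomplex, $K=\real{P\setminus Q}$, to get $\tilde{H}_i\bigl(S^n\setminus\real{P\setminus Q}\bigr)\cong\tilde{H}^{n-i-1}(\real{P\setminus Q})$, and then run your deformation retraction argument verbatim with the roles of $Q$ and $P\setminus Q$ interchanged (both are full subcomplexes, so the construction is symmetric) to conclude $S^n\setminus\real{P\setminus Q}\simeq\real{Q}$. This produces exactly $\tilde{H}_i(\real{Q};\mathbb{Z})\cong\tilde{H}^{n-i-1}(\real{P\setminus Q};\mathbb{Z})$.
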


In order to deduce information about the homotopy type of a space from its homology groups, we need a corollary of the theorems of Whitehead and Hurewicz.

\begin{theorem}[Hurewicz theorem {\cite[Theorem 4.32]{Hatcher:Book}}] If a space $X$ is $(n-1)$-connected, $n \geq 2$, then $\tilde{H}_i(X) = 0$ for all $0<i<n$ and $\pi_n(X)$ is isomorphic to $H_n(X)$.   \end{theorem}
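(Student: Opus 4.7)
The plan is to prove the Hurewicz theorem by a CW-theoretic reduction to the case of a wedge of $n$-spheres. First, by CW approximation I would replace $X$ by a homotopy equivalent CW complex, since both $\pi_i$ and $H_i$ are homotopy invariants. Second, using that $X$ is $(n-1)$-connected, I would iteratively attach cells to construct a homotopy equivalent CW complex $Y$ whose $(n-1)$-skeleton is a single point: any map $S^k\to X$ with $k<n$ is nullhomotopic and can therefore be coned off by a $(k+1)$-cell, and Whitehead's theorem then identifies the enlarged complex with a model having the desired normal form. In this normal form, the cellular chain complex of $Y$ vanishes in degrees $1$ through $n-1$, which immediately yields $\tilde{H}_i(X)=0$ for $0<i<n$.

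Next, since $Y^{(n-1)}$ is a point, the $n$-skeleton is a wedge of $n$-spheres $\bigvee_\alpha S^n_\alpha$. I would define the Hurewicz homomorphism $h\colon \pi_n(Y)\to H_n(Y)$ by $h([f])=f_*([S^n])$, where $[S^n]$ is the fundamental class. By cellular approximation, the inclusion $Y^{(n)}\hookrightarrow Y$ induces a surjection on $\pi_n$; analyzing the long exact sequence of the pair $(Y^{(n+1)},Y^{(n)})$, one further identifies the kernel of $\pi_n(Y^{(n)})\twoheadrightarrow \pi_n(Y)$ with the subgroup generated by the homotopy classes of attaching maps of $(n+1)$-cells. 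The parallel statement on the homology side is visible from the cellular chain complex and identifies $H_n(Y)$ with the cokernel of the same map of abelian groups. Since the Hurewicz map for a wedge of $n$-spheres ($n\geq 2$) is manifestly the identity isomorphism $\bigoplus_\alpha\mathbb{Z}\to \bigoplus_\alpha\mathbb{Z}$, passing to the cokernel gives the desired isomorphism $\pi_n(Y)\cong H_n(Y)$, and naturality of $h$ ensures this agrees with the map on $X$.

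The main obstacle is the identification of the kernel of $\pi_n(Y^{(n)})\to \pi_n(Y)$ with the subgroup generated by attaching maps; this requires the compression lemma, or equivalently cellular approximation for homotopies, to show that a class in this kernel admits a representative which factors through the boundaries of $(n+1)$-cells. The hypothesis $n\geq 2$ is essential: it ensures that $\pi_n$ is abelian so that the quotient descriptions above make sense, and it is also needed for the computation $\pi_n\bigl(\bigvee_\alpha S^n_\alpha\bigr)\cong \bigoplus_\alpha \mathbb{Z}$. The excluded case $n=1$ is instead the classical identification of $H_1$ with the abelianization of $\pi_1$, which must be treated separately.
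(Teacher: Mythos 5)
The paper states the Hurewicz theorem purely as a citation to Hatcher's book (Theorem~4.32) and gives no proof of its own, so there is no in-paper argument to compare against. Your sketch is correct and is essentially the standard CW-theoretic proof found in the cited source: reduce (via CW approximation and the fact that an $(n-1)$-connected complex has a CW model with trivial $(n-1)$-skeleton) to a complex $Y$ whose cellular chain groups vanish in degrees $1,\dots,n-1$, observe that $Y^{(n)}$ is then a wedge of $n$-spheres where the Hurewicz map is visibly an isomorphism $\bigoplus_\alpha\mathbb{Z}\to\bigoplus_\alpha\mathbb{Z}$, and match the two cokernel descriptions of $\pi_n(Y)$ and $H_n(Y)$ coming from the attaching maps of the $(n+1)$-cells. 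You correctly isolate the one nontrivial input — that the kernel of $\pi_n(Y^{(n)})\twoheadrightarrow\pi_n(Y)$ is generated by the classes of the $(n+1)$-cell attaching maps, which needs the compression lemma (or, equivalently, homotopy excision/Blakers--Massey) — and you correctly flag where $n\geq 2$ is used (abelianness of $\pi_n$, triviality of the $\pi_1$-action so ``subgroup'' and ``normal subgroup'' coincide, and the computation of $\pi_n$ of a wedge of $n$-spheres). The only soft spot is the phrase invoking Whitehead's theorem to pass to the ``normal form'' model; what is really used there is the standard CW-approximation argument (Hatcher Corollary~4.16/Proposition~4.21) building a homotopy-equivalent complex with no cells in dimensions $1$ through $n-1$, rather than an application of Whitehead's theorem per se, but this is a presentational point and does not affect the validity of the argument.
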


\begin{theorem}[Whitehead theorem, {\cite[Corollary 4.33]{Hatcher:Book}}]
\label{thm:Whitehead}
A map $f\colon X \to Y$ between simply-connected CW-complexes is a homotopy equivalence if $f_{\ast} \colon H_k(X) \to H_k(Y)$ is an isomorphism for each $k$. \end{theorem}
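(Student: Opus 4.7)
The plan is to reduce the homology hypothesis to a statement about homotopy groups, then invoke the more primitive form of Whitehead's theorem which asserts that a weak homotopy equivalence between CW complexes is a genuine homotopy equivalence.

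First I would replace $f\colon X\to Y$ by the inclusion $X\hookrightarrow M_f$ of $X$ into the mapping cylinder. Since $M_f$ deformation retracts onto $Y$, this does not change anything up to homotopy, so we may assume that $f$ is the inclusion of a CW subcomplex $X\subseteq Y$. Then, from the long exact sequence of the pair $(Y,X)$ in singular homology and the hypothesis that $f_*\colon H_k(X)\to H_k(Y)$ is an isomorphism for every $k$, one reads off that $H_k(Y,X)=0$ for all $k$.

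Next I would upgrade this vanishing of relative homology to a vanishing of relative homotopy. Because $X$ and $Y$ are simply-connected, a standard argument (using that $\pi_1(Y,X)$ is built from the kernel and cokernel data of $\pi_1(X)\to\pi_1(Y)$) shows that the pair $(Y,X)$ is $1$-connected, i.e.\ $\pi_1(Y,X)=0$. The relative Hurewicz theorem now applies: the first non-zero relative homotopy group of $(Y,X)$ is isomorphic to the first non-zero relative homology group. Since all relative homology groups vanish, an induction on $n$ combined with the relative Hurewicz theorem forces $\pi_n(Y,X)=0$ for every $n\geq 1$.

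Plugging this into the long exact sequence of homotopy groups of the pair $(Y,X)$ gives that the inclusion induces isomorphisms $\pi_n(X)\xrightarrow{\cong}\pi_n(Y)$ for all $n$, i.e.\ $f$ is a weak homotopy equivalence. At this point I would quote the classical Whitehead theorem (in the form that a weak equivalence between CW complexes is a homotopy equivalence) to conclude. The main technical hurdle is the bootstrapping step based on the relative Hurewicz theorem, where one must justify that $(Y,X)$ is in fact simply-connected as a pair; this is precisely the place where the hypothesis that both $X$ and $Y$ are simply-connected is essential, and it is what prevents the statement from holding without that hypothesis.
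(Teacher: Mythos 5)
The paper does not prove this statement; it simply cites it as Corollary 4.33 of Hatcher's textbook. Your argument is correct and is in fact essentially Hatcher's own proof of that corollary: replace $f$ by the inclusion into the mapping cylinder, use the long exact sequence to kill $H_*(Y,X)$, bootstrap via the relative Hurewicz theorem (where the simple-connectivity hypothesis enters) to kill $\pi_*(Y,X)$, and finish with the classical form of the Whitehead theorem for CW complexes. So there is nothing to compare against in the paper itself, but your write-up reproduces the standard proof correctly.
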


\begin{corollary}
\label{cor:Whitehead}
Let $X$ be a simply-connected CW-complex such that
\begin{equation*}
\tilde{H}_i(X)=
\begin{cases}
\mathbb{Z}^\lambda	&,\, i=n,\\
0					&,\, \text{otherwise.}
\end{cases}
\end{equation*}
Then $X$ is homotopy equivalent to a wedge of $\lambda$ spheres of dimension $n$.
\end{corollary}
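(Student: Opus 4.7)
The plan is to build an explicit map $f \colon Y \to X$ from a wedge $Y \coloneqq \bigvee_{k=1}^\lambda S^n$ and apply the Whitehead theorem (Theorem~\ref{thm:Whitehead}). Since $\tilde{H}_i(Y) \cong \mathbb{Z}^\lambda$ in degree $n$ and vanishes otherwise, matching the homology of $X$, the whole game reduces to producing $f$ that realises an isomorphism on $H_n$ after a suitable choice of bases.

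The first step is to upgrade the hypothesis ``simply-connected with vanishing reduced homology in degrees $0 < i < n$'' to the stronger statement that $X$ is $(n-1)$-connected. Starting from $\pi_1(X) = 0$, iterated application of the Hurewicz theorem gives $\pi_k(X) \cong H_k(X) = 0$ for each $1 < k < n$, since at every stage $X$ is already $(k-1)$-connected and $\tilde{H}_k(X) = 0$ by assumption. The Hurewicz theorem in degree $n$ then delivers an isomorphism $\pi_n(X) \xrightarrow{\cong} H_n(X) \cong \mathbb{Z}^\lambda$. (The degenerate case $n=1$ is consistent: simple-connectedness together with the Hurewicz theorem force $\lambda = 0$, and Whitehead then gives $X$ contractible, matching the empty wedge.)

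With this in hand, I would choose a $\mathbb{Z}$-basis $\{e_k\}_{k}$ of $H_n(X)$, lift each $e_k$ via the Hurewicz isomorphism to a map $\alpha_k \colon S^n \to X$, and assemble these into a single map $f \colon Y \to X$ on the wedge. By construction, $f_*$ sends the canonical basis of $H_n(Y)$ bijectively onto $\{e_k\}$ and so is an isomorphism in degree $n$; in every other positive degree both groups vanish, so $f_*$ is trivially an isomorphism there. Both $X$ and $Y$ are simply-connected CW complexes (the latter because $n \geq 2$ in the non-trivial case), so the Whitehead theorem upgrades the homology isomorphism to a genuine homotopy equivalence.

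I do not expect any real obstacle; the only point that needs mild care is allowing $\lambda$ to be infinite (as will be required for the application to Theorem~\ref{thm:FreeFactorComplex}). Both the wedge $\bigvee_{k} S^n$ and the simultaneous choice of lifts $\alpha_k$ make sense for an arbitrary index set, and the Whitehead theorem does not require finiteness, so the argument is cardinality-independent.
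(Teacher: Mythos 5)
Your proposal is correct and matches the paper's proof essentially step for step: both upgrade the hypotheses to $(n-1)$-connectedness via Hurewicz, choose generating spheres for $\pi_n(X)\cong \tilde H_n(X)$, assemble them into a map from a wedge, and close with the Whitehead theorem. Your extra remarks about the degenerate case $n=1$ and about infinite $\lambda$ are welcome elaborations but not a different argument.
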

\begin{proof}
By the Hurewicz theorem, $X$ is in fact $(n-1)$-connected and $\pi_n(X)\cong \tilde{H}_n(X)=\mathbb{Z}^\lambda$. Now take a disjoint union $\bigsqcup_{\mu\leq \lambda} S_\mu$ of $n$-spheres.
For each $\mu\leq \lambda$, choose a generator $S_\mu\to X$ of the $\mu$-th summand of $\pi_n(X)$.
This gives rise to a map $f:Y\to X$ where $Y$ is the space obtained by wedging together the $S_\mu$ along their base points. This induces an isomorphism $f_*$ on all homology groups, so the claim follows from the Whitehead theorem.
\end{proof}

\begin{remark}
A CW complex is \emph{$n$-spherical} if it is homotopy equivalent to a wedge of $n$-spheres. By the preceding theorems, an $n$-dimensional complex $X$ is $n$-spherical if and only if $\pi_i(X)$ is trivial for all $i<n$.
\end{remark}

\section{Outer space and its relatives}\label{sec:prelimOuterspace}

Throughout this section, let $\free$ be a free group of finite rank $\rank \geq 2$.

\subsection{Outer Space, its spine and the free splitting complex}

Identify $\free$ with $\pi_1(\mathcal{R}, \ast)$ where $\mathcal{R}$ is a rose with $\rank$ petals. A \emph{marked graph} $G$ is a graph of rank $\rank$ equipped with a homotopy equivalence $m : \mathcal{R}\to G$ called a \emph{marking}. The marking determines an identification of $\free$ with $\pi_1(G,m(\ast))$.

(Unreduced) Culler--Vogtmann \emph{Outer space} $\CV$, defined in \cite{CV:OuterSpace}, is the space of equivalence classes of marked metric graphs $G$ of volume one such that every vertex of $G$ has valence at least three. Outer space can be decomposed into a disjoint union of open simplices, where the missing faces are thought of as ``sitting at infinity". There is a natural \emph{simplicial completion} obtained by adding the missing faces at infinity. The subspace of this completion consisting of all the open faces sitting at infinity is called the \emph{simplicial boundary} $\CVbound$ of Outer space. 

A \emph{free splitting} $S$ of $\free$ is a non-trivial, minimal, simplicial $\free$-tree with trivial edge stabilisers.
The \emph{vertex group system} of a free splitting $S$ is the (finite) set of conjugacy classes of its vertex stabilisers. 
 Two free splittings $S$ and $S'$ are equivalent if they are equivariantly isomorphic. We say that $S'$ collapses to $S$ if there is a \emph{collapse map} $S'\to S$ which collapses an $\free$-invariant set of edges. The \emph{poset of free splittings} $\FreeS$ is given by the set of all equivalence classes of free splittings of $\free$ where $S\leq S'$ if $S'$ collapses to $S$. The \emph{free splitting complex} is the order complex $\Delta(\FreeS)$ of the poset of free splittings.
Outer space naturally embeds as a subspace of $\real{\FreeS}$. In fact, the free splitting complex is naturally identified with the barycentric subdivision of the simplicial completion of $\CV$.
Each free splitting $S$ can equivalently be seen as a graph of groups decomposition of $\free$ with trivial edge groups by taking the quotient $S/\free$. We will often adopt this point of view in later sections without further notice.

The \emph{spine} $L$ of $\CV$ is given by the subposet of $\FreeS$ consisting of all free splittings that have trivial vertex stabilisers. We can interpret $\real{L}$ as a subspace of $\CV$. It consists of all marked metric graphs $G$ satisfying the following property:
The subgraph spanned by the set of all edges of $G$ not having maximal length forms a forest.
In \cite{CV:OuterSpace}, Culler and Vogtmann showed that $L$ is a contractible deformation retract of $\CV$.
By the definitions above, we have a homeomorphism
\begin{equation*}
\CVbound \cong \real{\FreeS\bsl L}.
\end{equation*}

An edge $e$ of a graph $G$ is called a \emph{separating edge} if removing it from $G$ results in a disconnected graph. The subspace of $\CV$ consisting of all marked graphs that do not contain separating edges is called \emph{reduced Outer space}, denoted $\CVred$. It is an equivariant deformation retract of $\CV$. Similarly to the unreduced cased, there is a poset $K$ such that $\CVred$ retracts to $\real{K}$. It is the subposet of $L$ consisting of all marked graphs having no separating edges and is called the \emph{spine (of reduced Outer space)}.

The barycentric subdivision of the simplicial closure of reduced Outer space is given by the order complex (see Section \ref{sec:prelim_poset_topology}) of the poset $\FreeSred$ consisting of all those free splittings $S\in \FreeS$ such that the quotient $S/\free$ does not have any separating edges.
Just as in the unreduced case, we have
\begin{equation*}
\partial_s\CVred \cong \real{\FreeSred\bsl K}.
\end{equation*}

\subsection{Relative Outer space and its spine} In \cite{GL:OuterSpaceFreeProduct}, Guirardel and Levitt define relative Outer space for a countable group that splits as a free product $G = G_1 \ast \ldots \ast G_k \ast F_N$ where $N+k \geq 2$. They also prove contractibility of relative Outer space. We will later on consider the case where $G = \free$ splits as $\free = A_1 \ast \ldots \ast A_k \ast F_N$ for $k> 0$. Let $\ffa = \{[A_1], \ldots, [A_k]\}$ be the associated free factor system of $\free$.

Subgroups of $\free$ that are conjugate into a free factor in $\ffa$ are called \emph{peripheral} subgroups. An $(\free, \ffa)$-tree is an $\mathbb{R}$-tree with an isometric action of $\free$, in which every peripheral subgroup fixes a unique point. Two $(\free, \ffa)$-trees are equivalent if there exists an $\free$-equivariant isometry between them.  A \emph{Grushko} $(\free,\ffa)$-graph is the quotient by $\free$ of a minimal, simplicial metric $(\free, \ffa)$-tree, whose set of point stabilisers is the free factor system $\ffa$ and edge stabilisers are trivial.
 \emph{Relative Outer space} is the space of homothety classes of equivalence classes of Grushko $(\free, \ffa)$-graphs.
The \emph{spine of relative Outer space}, denoted by $L(\free, \ffa)$, is the subposet of $\FreeS$ consisting of all free splittings whose system of vertex stabilisers is given by $\ffa$. Its realisation can be seen as a subspace of relative Outer space.
Since relative Outer space deformation retracts onto its spine, $L(\free, \ffa)$ is contractible.

\section{Posets of graphs}\label{sec:graphposet}
In this section, we study (finite) posets of subgraphs of a given graph $G$.
For the combinatorial arguments we use, let us set up the following notation:

In what follows, all graphs are assumed to be finite. They are allowed to have loops and multiple edges. For a graph $G$, we denote the set of its vertices by $V(G)$ and the set of its edges by $E(G)$.
If $e\in E(G)$ is an edge, then $G-e$ is defined to be the graph obtained from $G$ by removing $e$ and $G/e$ is obtained by collapsing $e$ and identifying its two endpoints to a new vertex $v_e$.
A graph is called a \emph{tree} if it is contractible. It is called a \emph{forest} if it is a disjoint union of trees.

Throughout this section, we will only care about \emph{edge-induced subgraphs}, i.e. when we talk about a ``subgraph $H$ of $G$'', we will always assume that $H$ is possibly disconnected but does not contain any isolated vertices. Hence, we can interpret any subgraph of $G$ as a subset of $E(G)$.

\begin{definition}
A \emph{core subgraph} $H$ of a graph $G$ is a proper subgraph such that the fundamental group of each connected component of $H$ is non-trivial and no vertex of $H$ has valence one in $H$.
Every graph $G$ contains a unique maximal core subgraph that we will refer to as the \emph{core of $G$}, denoted by $\mathring{G}$.
\end{definition}
Note that, in contrast to the convention introduced in \cite{BF:Bordification}, our core subgraphs are allowed to have separating edges.

\subsection{The poset of all core subgraphs}

\begin{definition}
Let $G$ be a graph. We define the following posets of subgraphs of $G$; all of them are ordered by inclusion:
\begin{enumerate}
\item $\Sub(G)$ is the poset of all proper subgraphs of $G$ that are non-empty. Equivalently, $\Sub(G)$ can be seen as the poset of all proper, non-empty subsets of $E(G)$.
\item $\Forest(G)$ denotes the poset of all proper, non-empty subgraphs of $G$ that are forests.
\item $\X(G)$ is defined to be the poset of proper subgraphs of $G$ that are non-empty and where at least one connected component has non-trivial fundamental group.
\item $\Core(G)$ is the poset of all proper core subgraphs of $G$.
\end{enumerate}
\end{definition}

Clearly one has:
\begin{equation*}
\Core(G)\subseteq\X(G)\subseteq\Sub(G)
\end{equation*}
and
\begin{equation*}
\X(G)=\Sub(G)\bsl\Forest(G).
\end{equation*}
Examples of the realisation of $\X(G)$ can be found in the Appendix, see Figure~\ref{fig:X(G) 4 edges}.
 
The proof of the following lemma is fairly standard and we will use the argument several times throughout this article. For the sake of completeness, here we will spell it out once.
\begin{lemma}
\label{lem:SubgraphstoCores}
$\X(G)$ deformation retracts to $\Core(G)$.
\end{lemma}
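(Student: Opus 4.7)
The plan is to construct an explicit retraction $r : \X(G) \to \Core(G)$ sending each subgraph to its core, and then appeal to Lemma~\ref{lem:HomotopicPosetMaps} to conclude that the inclusion $\Core(G) \hookrightarrow \X(G)$ induces a homotopy equivalence on realisations, which by standard arguments upgrades to a deformation retract on the level of geometric realisations.

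First I would verify that $r(H) \coloneqq \mathring{H}$ is well-defined as a map into $\Core(G)$. Given $H \in \X(G)$, the core $\mathring{H}$ can be described as the union of all subgraphs $K \subseteq H$ in which every vertex has valence at least two in $K$, or equivalently as the result of iteratively deleting valence-one vertices (together with their incident edges) and then discarding any remaining tree components. Because $H$ has at least one component with non-trivial fundamental group, this process leaves a non-empty subgraph, and since $\mathring{H} \subseteq H \subsetneq G$ and every component of $\mathring{H}$ has non-trivial $\pi_1$ and no valence-one vertices, $\mathring{H}$ is a proper core subgraph of $G$.

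Next I would check that $r$ is a poset map. Using the characterisation of $\mathring{H}$ as the union of subgraphs of $H$ with minimum valence $\geq 2$, any such $K \subseteq H \subseteq H'$ is also a subgraph of $H'$ with minimum valence $\geq 2$, hence contained in $\mathring{H'}$; thus $H \subseteq H'$ forces $\mathring{H} \subseteq \mathring{H'}$. Moreover $r$ restricted to $\Core(G)$ is the identity, since a core subgraph equals its own core. This step, verifying monotonicity of $H \mapsto \mathring{H}$, is the main (and essentially only) technical point; once the ``subgraph with minimum valence $\geq 2$'' description is in hand, it is immediate, but without it one has to argue by tracking the pruning procedure carefully, which is where I expect any subtlety to lie.

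Finally, composing with the inclusion $i \colon \Core(G) \hookrightarrow \X(G)$, we obtain a poset map $i \circ r \colon \X(G) \to \X(G)$ satisfying $i \circ r (H) = \mathring{H} \subseteq H$ for every $H$. By Lemma~\ref{lem:HomotopicPosetMaps}, $\real{i \circ r}$ is homotopic to the identity on $\real{\X(G)}$, while $r \circ i$ is literally the identity on $\Core(G)$. This exhibits $\real{\Core(G)}$ as a deformation retract of $\real{\X(G)}$, as claimed.
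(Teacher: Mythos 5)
Your proof is correct and takes essentially the same route as the paper: define the poset map $H \mapsto \mathring{H}$, verify it is monotone and restricts to the identity on $\Core(G)$, and invoke Lemma~\ref{lem:HomotopicPosetMaps}. You spell out the monotonicity of $H \mapsto \mathring{H}$ (via the characterisation of the core as the maximal subgraph with minimum valence at least two) where the paper simply asserts it, but otherwise the arguments coincide.
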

\begin{proof}
Every subgraph $H\in \X(G)$ contains a unique maximal core subgraph $\mathring{H}$ and if $H_1\subseteq H_2$, one has $\mathring{H_1}\subseteq \mathring{H_2}$. Hence, sending each $H$ to this core subgraph $\mathring{H}$ defines a poset map $f:\nolinebreak\X(G)\to \Core(G)$ restricting to the identity on $\Core(G)$. Let $\iota$ denote the inclusion $\Core(G)\hookrightarrow \X(G)$. Then the composition $\iota\circ f:\X(G)\to\nolinebreak\X(G)$ clearly satisfies $\iota\circ f(H)\leq H$ for all $H\in \X(G)$ which by Lemma~\ref{lem:HomotopicPosetMaps} implies that it is homotopic to the identity. As $\iota\circ f|_{\Core(G)}\equiv\operatorname{id}$, this finishes the proof.
\end{proof}

\begin{proposition}
\label{prop:X(G) homotopy}
Let $G$ be a finite connected graph whose fundamental group has rank $\rank\geq 2$ and assume that every vertex of $G$ has valence at least $3$.
Then $\X(G)$ is contractible if and only if $G$ has a separating edge. If $G$ does not have a separating edge, then $\X(G)$ is homotopy equivalent to a wedge of spheres of dimension $\rank-2$.
\end{proposition}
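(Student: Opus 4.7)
By Lemma~\ref{lem:SubgraphstoCores}, it suffices to determine the homotopy type of $\Core(G)$. When $G$ has a separating edge $e$, no cycle of $G$ uses $e$, so for every $H \in \Core(G)$ the subgraph $\mathring{H - e}$ is non-empty. This defines a monotone poset map $f \colon \Core(G) \to \Core(G)$, $H \mapsto \mathring{H - e}$, restricting to the identity on the subposet $\Core(G)_{\not\ni e}$ of core subgraphs avoiding $e$. By Lemma~\ref{lem:HomotopicPosetMaps}, $\Core(G)$ deformation retracts onto $\Core(G)_{\not\ni e}$. The valence $\geq 3$ hypothesis ensures that $G - e$ is itself a core subgraph of $G$ (both endpoints of $e$ retain valence $\geq 2$) and hence the maximum element of $\Core(G)_{\not\ni e}$, so this subposet is a cone and thus contractible.

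When $G$ has no separating edge, I would apply Alexander duality (Lemma~\ref{lem:AlexanderDuality}). The realisation $\real{\Sub(G)}$ is the boundary of the $(|E|-1)$-simplex and hence homeomorphic to $S^{|E|-2}$, and $\X(G)$ is its complement $\Sub(G) \setminus \Forest(G)$. The realisation of $\Forest(G)$ is, up to barycentric subdivision, the independence complex $\mathcal{I}(M(G))$ of the graphic matroid of $G$; by Bj\"orner's classical theorem on shellability of matroid independence complexes, this is homotopy equivalent to a wedge of $(|V|-2)$-spheres, with positive count precisely when the matroid has no coloop, i.e.\ when $G$ has no separating edge. Alexander duality then yields
\[
\tilde H_i(\X(G)) \cong \tilde H^{|E|-i-3}(\Forest(G)),
\]
which is free abelian of some positive rank $\mu$ in degree $i = \rank - 2 = |E|-|V|-1$ and vanishes in all other degrees.

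To conclude the homotopy equivalence $\X(G) \simeq \bigvee_\mu S^{\rank-2}$, the remaining task is to show that $\X(G)$ is $(\rank - 3)$-connected; combined with the homology computation, Hurewicz together with Corollary~\ref{cor:Whitehead} then yields the homotopy type when $\rank \geq 4$. For $\rank = 2$, the valence condition forces $G$ to be either the figure-eight or the theta graph, and one verifies directly that $\X(G)$ is a discrete set of $\mu + 1$ points. The main obstacle I expect to tackle is the connectivity statement for $\rank \geq 3$; a plausible approach is induction on $|V(G)|$, using a non-loop edge $e$ (so that $G/e$ has the same rank, one fewer vertex, no separating edge, and still satisfies the valence condition) together with a fibre argument based on Lemma~\ref{lem:QuillenFibres} or its sharpening Lemma~\ref{lem:UpConnectivitiyByOne}. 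A subtle point is that the natural candidate map $H \mapsto H/e$ fails to land in $\Core(G/e)$ precisely when $H = G - e$, so the setup must be adjusted to handle this boundary case, and the $\rank = 3$ case needs extra care since $\X(G)$ is then not simply-connected and Corollary~\ref{cor:Whitehead} does not directly apply.
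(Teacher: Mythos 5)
Your direct poset retraction for the separating-edge case is a genuine simplification: the paper handles that case via Alexander duality plus simple-connectedness and Corollary~\ref{cor:Whitehead}, whereas your cone argument is immediate. (One small point worth spelling out: showing $G-e\in\Core(G)$ needs both that its vertices have valence $\geq 2$, which you note, and that each component has non-trivial fundamental group; the latter holds because a non-trivial finite graph with no valence-$1$ vertex cannot be a tree.) Your Alexander duality computation in the non-separating case is correct and is exactly the paper's approach; Bj\"orner's matroid shellability theorem is interchangeable with the Vogtmann result the paper cites.

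The genuine gap is that you do not establish the simple-connectedness needed to pass from the homology computation to the homotopy type. You flag this as ``the main obstacle'' and sketch an induction via $H\mapsto H/e$ and Lemma~\ref{lem:QuillenFibres}, but you leave it unexecuted, including the boundary case $H=G-e$, the bookkeeping for disconnected core subgraphs, and the $\rank=3$ case which you correctly observe is not simply connected so Corollary~\ref{cor:Whitehead} cannot close it. The paper instead gives a short elementary argument for $\rank\geq 4$: removing at most three edges from $G$ still leaves non-trivial $\pi_1$, so the poset of subgraphs missing at most three edges is contained in $\X(G)$, its realisation is the $2$-skeleton of the simplex on $E(G)$ and hence simply connected, and any closed edge path in $\real{\X(G)}$ can be homotoped into this subcomplex by two elementary moves. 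For $\rank\in\{2,3\}$ the paper falls back on a finite case-check (Appendix). Either your induction must be carried through in full or an argument of the above kind must be supplied; as written, the proposal is an outline of the final step rather than a proof of it.
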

\begin{proof}
Note that $\Sub(G)$ can be seen as the poset of all proper faces of a simplex with vertex set $E(G)$. Hence, its realisation $\real{\Sub(G)}$ is homeomorphic to a sphere of dimension $|E(G)|-2$.

By \cite[Proposition 2.2]{V:LocalStructures}, the poset $\Forest(G)$ is contractible if and only if $G$ has a separating edge and is homotopy equivalent to a wedge of $(|V(G)|-2)$-spheres if it does not contain a separating edge.
We want to use Alexander duality as stated in Lemma~\ref{lem:AlexanderDuality} to describe the homology groups of $\X(G)=\Sub(G)\bsl\Forest(G)$.

If $G$ has a separating edge, it immediately follows from Alexander duality that all reduced homology groups of $\X(G)$ vanish.
If on the other hand $G$ does not have a separating edge, then the only non-trivial homology group of $\X(G)$ appears in dimension
\begin{equation*}
(|E(G)|-2)-1-(|V(G)|-2) = \rank-2
\end{equation*}
where it is given by a direct sum of $\mathbb{Z}$'s.

We next want to show that for $\rank \geq 4$, the realisation of $\X(G)$ is simply-connected in order to apply the Whitehead theorem.

Denote by $\Sub(G)^{(k)}$ the subposet of $\Sub(G)$ given by those subgraphs having precisely $(|E(G)|-k)$ edges. As $\rank\geq 4$, removing at most three edges from $G$ results in a graph with non-trivial fundamental group. Hence, we have $\Sub(G)^{(k)}\subset \X(G)$ for $k=1,2,3$. The realisation of
\begin{equation*}
\Sub(G)^{(\leq 3)}\coloneqq\Sub(G)^{(1)}\cup\Sub(G)^{(2)}\cup\Sub(G)^{(3)}
\end{equation*}
forms a subspace of $\real{\X(G)}$ that is homeomorphic to the $2$-skeleton of an $(|E(G)|-2)$-simplex. In particular, it is simply-connected.

\begin{figure}
\begin{center}
\includegraphics[scale=1]{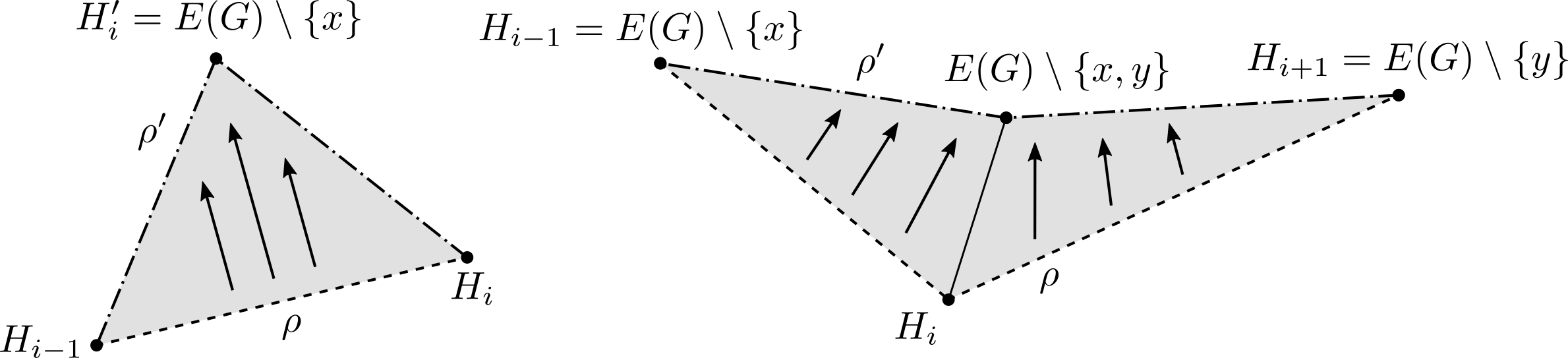}
\end{center}
\caption{Simple connectedness of $\X(G)$.}
\label{fig:Xsimply-connected}
\end{figure}

Now let $\rho$ be a closed edge path in $\real{\X(G)}$ given by the sequence of vertices $(H=H_1,H_2,...,H_k=H)$. We want to show that it can be homotoped to a path in $\real{\Sub(G)^{(1)}\cup \Sub(G)^{(2)}}$.
Whenever we have an edge $(H_{i-1}\subset H_{i})$ such that $H_{i}$ has at least two edges less than $G$, there is a subgraph $H'_{i} \in \Sub(G)^{(1)}$ containing $H_i$. As the chain $(H_{i-1}\subset H_{i}\subset H'_{i})$ forms a simplex in $\X(G)$, we can replace the segment $(H_{i-1}, H_{i})$ by $(H_{i-1},H'_i, H_{i})$ and hence assume that every second vertex crossed by $\rho$ lies in $\Sub(G)^{(1)}$ (see the left hand side of Figure~\ref{fig:Xsimply-connected}).
Next take a segment $(H_{i-1}\supset H_i \subset H_{i+1})$ where $H_{i-1}=E(G)\bsl\set[x]$ and $H_{i+1}=E(G)\bsl\set[y]$ lie in $\Sub(G)^{(1)}$. In this situation, the two chains $(H_i\subseteq E(G)\bsl\set[x,y]\subset H_{i-1})$ and $(H_i\subseteq E(G)\bsl\set[x,y]\subset H_{i+1})$ form simplices contained in $\X(G)$. It follows that we can perform a homotopy in order to replace $(H_{i-1}\supset H_i \subset H_{i+1})$ by $(H_{i-1}\supset E(G)\bsl\set[x,y] \subset H_{i+1})$.

This argument shows that every closed path can be homotoped to a path that lies in $\real{\Sub(G)^{(\leq 3)}}$. As this is a simply-connected subset of $\real{\X(G)}$, it follows that $\X(G)$ itself is simply-connected for $\rank\geq 4$. Applying Corollary~\ref{cor:Whitehead} yields the result.

The only cases that remain are those where $\rank=2$ or $3$. However, as we assumed that every vertex of $G$ has valence at least $3$, there are only finitely many such graphs. Using Lemma~\ref{lem:SubgraphstoCores}, it is not hard to verify the claim using a case-by-case analysis. For completeness, the proof for $\rank=3$ can be found in the Appendix~\ref{proof for rank=3}.
\end{proof}

\begin{remark}
Assuming that each vertex of $G$ has valence at least $3$ does not impose any restrictions for the considerations in this article as every graph in Outer space satisfies this condition.
However, note that we only used this assumption in the case where $\rank=2$ or $3$ and there it only shortened the argument and could easily be dropped.
\end{remark}

\subsection{The poset of connected core subgraphs}\label{sec:connectedcoresubgraphs}
\begin{definition}
For a graph $G$, we define $\cX(G)$ to be the poset of all proper connected subgraphs of $G$ that are not trees
\begin{equation*}
\cX(G)=\set[H \text{ subgraph of }G\mid G\not=H \text{ connected, } \pi_1(H)\not=\set[1]]
\end{equation*}
ordered by inclusion.
Let $\cCore(G)$ by the poset of all proper connected core subgraphs of $G$.
\end{definition}

Later on, we will be interested in the homotopy type of $\cCore(G)$ as it occurs as the fibre of a map we will use to study higher connectivity of $\FreeF$.
However, it is easier to describe the structure of $\cX(G)$, so we set up the following Lemma.

\begin{lemma}
\label{lem:cSubgraphstocCores}
$\cX(G)$ deformation retracts to $\cCore(G)$.
\end{lemma}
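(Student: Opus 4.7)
The plan is to imitate the proof of Lemma~\ref{lem:SubgraphstoCores} essentially verbatim, with one additional check: that the ``take the core'' operation preserves connectedness, so that it defines a poset map into $\cCore(G)$ rather than merely into $\Core(G)$.

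Concretely, I would define $f\colon \cX(G)\to \cCore(G)$ by $H\mapsto \mathring{H}$, the unique maximal core subgraph of $H$. Since $H\in \cX(G)$ is connected with $\pi_1(H)\neq 1$, the graph $\mathring{H}$ is non-empty. I would then verify connectedness of $\mathring{H}$ using the standard iterative description of the core: $\mathring{H}$ is obtained from $H$ by repeatedly removing vertices of valence one together with their incident edges. Each such removal preserves connectedness of a connected graph, and the process does not terminate with the empty graph because $\pi_1(H)\neq 1$ forces some cycle to survive. Hence $\mathring{H}\in \cCore(G)$ and $f$ is well-defined.

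Next I would check that $f$ is a poset map. If $H_1\subseteq H_2$ in $\cX(G)$, monotonicity of the iterative valence-one pruning implies $\mathring{H_1}\subseteq \mathring{H_2}$. By construction, $f$ restricts to the identity on $\cCore(G)$. Writing $\iota\colon \cCore(G)\hookrightarrow \cX(G)$ for the inclusion, the composition $\iota\circ f\colon \cX(G)\to \cX(G)$ satisfies $\iota\circ f(H)=\mathring{H}\leq H$ for every $H$, so Lemma~\ref{lem:HomotopicPosetMaps} yields $\real{\iota\circ f}\simeq \operatorname{id}_{\real{\cX(G)}}$. Together with $f\circ \iota = \operatorname{id}_{\cCore(G)}$, this exhibits $\cCore(G)$ as a deformation retract of $\cX(G)$.

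The only step that requires any thought beyond what was done in Lemma~\ref{lem:SubgraphstoCores} is the connectedness of $\mathring{H}$, and this is the main (very mild) obstacle; everything else is a direct transcription of the earlier argument.
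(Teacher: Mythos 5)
Your proof is correct and takes essentially the same route as the paper: define $f(H)=\mathring{H}$, observe that the core of a connected non-tree subgraph is connected (the paper phrases this as a consequence of allowing cores to have separating edges, while you spell out the valence-one pruning argument, but it is the same observation), and then invoke Lemma~\ref{lem:HomotopicPosetMaps} exactly as in Lemma~\ref{lem:SubgraphstoCores}.
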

\begin{proof}
As we allow our core subgraphs to have separating edges, the unique maximal core subgraph $\mathring{H}$ contained in a \emph{connected} non-tree subgraph $H$ is connected as well. Hence, sending $H$ to $\mathring{H}$ defines a monotone poset map $\cX(G)\to \cCore(G)$. Using Lemma~\ref{lem:HomotopicPosetMaps} as in the proof of Lemma~\ref{lem:SubgraphstoCores}, the claim follows.
\end{proof}

\begin{definition}[Valence-2-homotopy]
\label{definition valence-2-homotopy}
Let $G$ be a finite connected graph and $v\in V(G)$ be a vertex of valence two with adjacent edges $e_1\not=e_2$. We define $G^v$ to be the graph obtained from $G$ by replacing the segment $e_1ve_2$ by a new edge $e_v$; i.e. 
\begin{align*}
V(G^v)=V(G)\bsl\set[v],&& E(G^v)=\set[e_v]\cup E(G)\bsl\set[e_1,e_2]
\end{align*}
and $e_v$ connects the endpoints of $e_1$ and $e_2$ that are not equal to $v$.
\end{definition}

The graphs we want to study have no vertices of valence smaller than $3$. In order to preserve this property throughout the induction procedure used in the proof of Proposition~\ref{prop:cX(G) homotopy}, we need the following: 

\begin{lemma}
\label{lemma valence-2}
Let $G$ be as in Definition~\ref{definition valence-2-homotopy}. Then $\real{\cX(G)}\simeq \real{\cX(G^v)}$.
\end{lemma}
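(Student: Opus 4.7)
The plan is to construct two poset maps $f\colon \cX(G) \to \cX(G^v)$ and $g\colon \cX(G^v) \to \cX(G)$ which ``contract'' the valence-2 vertex $v$ and ``expand'' the edge $e_v$ respectively, and then use Lemma~\ref{lem:HomotopicPosetMaps} to show they induce mutually inverse homotopy equivalences on realisations. Concretely, identify $E(G^v) = (E(G) \setminus \{e_1,e_2\}) \cup \{e_v\}$ and define
\begin{equation*}
f(H) = \begin{cases}
(H \setminus \{e_1, e_2\}) \cup \{e_v\} & \text{if } e_1, e_2 \in H, \\
H \setminus \{e_1, e_2\} & \text{if exactly one of } e_1, e_2 \text{ lies in } H, \\
H & \text{if } e_1, e_2 \notin H,
\end{cases}
\end{equation*}
while $g(H')$ replaces $e_v$ by the pair $\{e_1,e_2\}$ when $e_v \in H'$ and is the identity otherwise.

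Next I would verify that $f$ and $g$ actually land in $\cX(G^v)$ and $\cX(G)$ respectively. The only nontrivial case is when $H \in \cX(G)$ contains exactly one of $e_1, e_2$, say $e_1$. Since $v$ has valence $2$ in $G$, it has valence $1$ in $H$, so $e_1$ is a leaf of $H$; removing it preserves connectivity and $\pi_1$, and since $H \neq \{e_1\}$ (the single edge being a tree) one obtains a non-empty, connected, non-tree subgraph. For $g$, subdividing an edge never affects connectivity or the fundamental group, so $g(H')$ is automatically connected and non-tree. Properness in both cases is immediate from counting edges. Checking that $f$ and $g$ are order-preserving is a routine case analysis on whether $e_1, e_2$ (respectively $e_v$) lie in $H_1, H_2$.

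Finally, a direct computation yields $f \circ g = \operatorname{id}_{\cX(G^v)}$, since $g$ produces subgraphs containing either both or neither of $e_1, e_2$, on which $f$ inverts it exactly. For $g \circ f$, the only case where one does not recover $H$ is precisely when $H$ contains exactly one of $e_1, e_2$: there $g \circ f(H) = H \setminus \{e_i\} \subsetneq H$. In all cases $g \circ f(H) \leq H$, so $g \circ f$ is monotone and Lemma~\ref{lem:HomotopicPosetMaps} gives $\|g\circ f\| \simeq \operatorname{id}_{\|\cX(G)\|}$. Combined with $\|f \circ g\| = \operatorname{id}_{\|\cX(G^v)\|}$, this shows that $\|f\|$ is a homotopy equivalence.

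The only real obstacle is the bookkeeping in the ``exactly one edge'' case: one must carefully justify both that $f(H)$ still lies in $\cX(G)$ (which uses the valence-2 hypothesis on $v$ to ensure $e_1$ is a leaf in $H$) and that this forces strict inequality in $g\circ f(H) \leq H$ — which is exactly why Lemma~\ref{lem:HomotopicPosetMaps} is needed rather than a direct equality of compositions.
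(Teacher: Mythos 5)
Your proposal is correct and uses exactly the same pair of maps as the paper (your $f$ and $g$ are the paper's $\phi$ and $\psi$, with the ``exactly one of $e_1,e_2$'' cases written uniformly as $H\setminus\{e_1,e_2\}$), followed by the same application of Lemma~\ref{lem:HomotopicPosetMaps} via $f\circ g=\operatorname{id}$ and $g\circ f\leq\operatorname{id}$. The extra verifications you include (that $e_1$ is a non-loop leaf of $H$ in the ``exactly one'' case, and that connectivity, $\pi_1$, non-emptiness and properness are preserved) are the details the paper leaves to the reader, so the two arguments agree.
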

\begin{proof}
Define poset maps $\phi:\cX(G)\to \cX(G^v)$ and $\psi:\cX(G^v)\to \cX(G)$ as follows:
\begin{equation*}
\phi(H)\coloneqq
\begin{cases}
H\bsl\set[e_1]&,\,e_1\in H,\, e_2\not\in H,\\
H\bsl\set[e_2]&,\,e_1\not\in H,\, e_2\in H,\\
\set[e_v]\cup H\bsl\set[e_1,e_2]&,\,e_1\in H\text{ and } e_2\in H,\\
H& \text{, else}.
\end{cases}
\end{equation*}
\begin{equation*}
\psi(K)\coloneqq
\begin{cases}
\set[e_1,e_2]\cup K\bsl\set[e_v]&,\,e_v\in K,\\
K& \text{, else}.
\end{cases}
\end{equation*}
By definition, one has $\psi\circ\phi(H)\subseteq H$ and $\phi\circ\psi(K)=K$, hence by Lemma~\ref{lem:HomotopicPosetMaps}, the maps $\phi$ and $\psi$ induce inverse homotopy equivalences on geometric realisations.
\end{proof}

\begin{proposition}
\label{prop:cX(G) homotopy}
Let $G$ be a finite connected graph whose fundamental group has rank $\rank\geq 2$ and assume that every vertex of $G$ has valence at least $3$. Then $\real{\cX(G)}$ is homotopy equivalent to a wedge of $(\rank-2)$-spheres.
\end{proposition}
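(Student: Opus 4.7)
The plan is to induct on $|E(G)|$. The base case is when $G$ is the rose with $\rank$ petals: every non-empty proper subset of the petals forms a connected subgraph (all petals meet at the central vertex) with non-trivial $\pi_1$ (each petal is a loop), so $\cX(G)$ is poset-isomorphic to the poset of proper non-empty subsets of an $\rank$-element set, whose realisation is the barycentric subdivision of $\partial\Delta^{\rank-1}\cong S^{\rank-2}$.

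For the inductive step, suppose $G$ is not a rose, so $G$ has some non-loop edge $e$ with endpoints $u\neq v$. The quotient $G/e$ has the same rank, one fewer edge, and still satisfies the valence-$\geq 3$ hypothesis (the merged vertex has valence $\deg_G(u)+\deg_G(v)-2\geq 4$). I would consider the poset map $\phi\colon\cX(G)\setminus\{G-e\}\to\cX(G/e)$ sending $H$ to $H/e$ when $e\in H$, and to $H$ (same edge set, now viewed in $G/e$) when $e\notin H$; the element $G-e$ is only removed from the domain when it actually lies in $\cX(G)$, i.e.\ when $e$ is non-separating, and in that case it is a maximal element of $\cX(G)$, so $\phi$ is a well-defined poset map. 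For $K\in\cX(G/e)$, writing $K':=K\cup\{e\}\subseteq G$, the fibre $\phi^{-1}(\cX(G/e)_{\leq K})$ is the set of $H\in\cX(G)\setminus\{G-e\}$ with $H\subseteq K'$. If $K'$ is connected in $G$ (equivalently, $K$ meets the merged vertex $w$), it is itself the maximum of this fibre; otherwise $K'$ splits as the disjoint union $K\sqcup\{e\}$, and since $\{e\}$ is a tree (as $e$ is non-loop), every connected non-tree subgraph of $K'$ lies inside $K$, with $K$ as maximum. Either way the fibre is contractible, so Lemma~\ref{lem:QuillenFibres} yields $\cX(G)\setminus\{G-e\}\simeq\cX(G/e)$, which by the inductive hypothesis is a wedge of $(\rank-2)$-spheres.

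To finish, I would reattach $G-e$ when $e$ is non-separating (the separating case needs no reattachment, giving $\cX(G)\simeq\cX(G/e)$ directly). As $G-e$ is maximal in $\cX(G)$, its star is a cone on its link, and that link is exactly $\cX(G-e)$. Hence $\cX(G)$ is, up to homotopy, the mapping cone of the inclusion $\iota\colon\cX(G-e)\hookrightarrow\cX(G)\setminus\{G-e\}\simeq\cX(G/e)$. Using Lemma~\ref{lemma valence-2} to unsubdivide the valence-$2$ vertices of $G-e$ (the only vertices of valence below $3$ in $G-e$, since $e$ is non-loop), the inductive hypothesis yields $\cX(G-e)\simeq\bigvee S^{\rank-3}$, which should be read as empty when $\rank=2$ and $G-e$ reduces to a pure cycle. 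Since $\cX(G/e)$ is $(\rank-3)$-connected, every map from a wedge of $(\rank-3)$-spheres into it is null-homotopic, so $\iota$ is null-homotopic and the mapping cone splits as $\cX(G/e)\vee\Sigma\cX(G-e)$, which is again a wedge of $(\rank-2)$-spheres. The main obstacle will be the careful case analysis in the fibre computation (especially when $K'$ is disconnected) and the clean identification of the reattachment of $G-e$ as the mapping cone of $\iota$, both of which hinge on tracking how connectedness in $G$ interacts with the collapse to $G/e$.
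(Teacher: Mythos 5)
Your proof is correct and follows essentially the same strategy as the paper: collapse a non-loop edge $e$ via the same map to relate $\cX(G)$ to $\cX(G/e)$, reattach the maximal element $G-e$ when $e$ is non-separating (identifying its link with $\cX(G-e)$ and controlling valences via Lemma~\ref{lemma valence-2}), and conclude with the standard gluing / mapping-cone step. The only tactical difference is that you establish $\cX(G)\setminus\{G-e\}\simeq\cX(G/e)$ via Quillen's fibre lemma by locating a maximum in each fibre, whereas the paper exhibits an explicit monotone inverse $\psi$ and invokes Lemma~\ref{lem:HomotopicPosetMaps}; both routes are valid.
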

\begin{proof}
We do induction on $\rank$. If $\rank=2$, there are exactly three graphs having only vertices of valence at least $3$. It is easy to check that for all of them, the poset of proper connected core subgraphs is a disjoint union of two or three points, i.e. a wedge of $0$-spheres.
Now assume $G$ is a graph whose fundamental group has rank $\rank>2$. If every edge of $G$ is a loop, $G$ is a rose with $\rank$ petals and every proper non-empty subset of $E(G)$ is an element of $\cX(G)$. Hence, the order complex of $\cX(G)$ is given by the set of all proper faces of a simplex of dimension $\rank-1$ whose vertices are in one-to-one correspondence with the edges of $G$.

Now assume that $G$ has an edge $e$ that is not a loop. Whenever $H\in \cX(G)$, the set $H\bsl\set[e]$ can be seen as a connected non-tree subgraph of $G/e$. If $H$ is not equal to $G-e$, then $H\bsl\set[e]$ is a proper subgraph of $G/e$. Consequently, we get a poset map
\begin{align*}
\phi: \cX(G)\bsl \set[G-e]&\to \cX(G/e)\\
H&\mapsto H\bsl \set[e].
\end{align*}

On the other hand, if we take a subgraph $K\in \cX(G/e)$ that contains the vertex $v_e$ to which $e$ was collapsed, it is easy to see that $K\cup\set[e]$ is an element of $\cX(G)\bsl \set[G-e]$.
This way, we can define a poset map
\begin{align*}
\label{map undoing a collapse}
\psi: \cX(G/e)&\to \cX(G)\bsl \set[G-e]\\
K&\mapsto
\begin{cases}
K\cup\set[e]&,\,v_e\in V(K),\\
K&\text{, else.}
\end{cases}
\end{align*}
One has $\psi\circ\phi(H)\supseteq H$ and $\phi\circ\psi(K)=K$, so using Lemma~\ref{lem:HomotopicPosetMaps}, the realisations of these two posets are homotopy equivalent.

When $e$ is a separating edge, the graph $G-e$ is not connected so in particular not an element of $\cX(G)$. It follows that $\real{\cX(G)}$ is homotopy equivalent to $\real{\cX(G/e)}$. As $G/e$ has one edge less than $G$ and every vertex in $G/e$ has valence at least $3$, we can apply induction.

If on the other hand $e$ is not a separating edge, $G-e$ is a connected graph having the same number of vertices as $G$ and one edge less. This implies that $rk(\pi_1(G-e))=\rank-1$. After possibly applying Lemma~\ref{lemma valence-2}, we may assume that each vertex in $G-e$ has valence at least $3$.

$\real{\cX(G)}$ is obtained from $\real{\cX(G)\bsl \set[G-e]}$ by attaching the star of $G-e$ along its link. The link of $G-e$ in $\real{\cX(G)}$ is naturally isomorphic to $\real{\cX(G-e)}$ which is by induction homotopy equivalent to a wedge of $(\rank-3)$-spheres. The star of a vertex is always contractible and gluing a contractible set to an $(\rank-2)$-spherical complex along an $(\rank-3)$-spherical subcomplex results in an $(\rank-2)$-spherical complex, so the claim follows (see e.g. \cite[Lemma 6.2]{BSV:Bordification}).
\end{proof}

\section{Contractibility of relative free splitting complexes}\label{sec:freesplittingposets}
\label{sec:posets of free splittings}
Throughout this section, let $\rank\geq 2$.
For a free splitting $S$, let $\V(S)$ denote its vertex group system.
Given a free factor system $\ffa$ in $\free$, the \emph{poset of free splittings of $\free$ relative to $\ffa$}, denoted $\FreeS(\ffa)$, is the subposet of $\FreeS$ consisting of all free splittings $S\in \FreeS$ such that $\ffa\sqsubseteq \V(S)$. Its realisation is the relative free splitting complex studied in \cite{HM:RelativeFree}, where the authors showed that it is non-empty, connected and hyperbolic. For a proper free factor $A$ in $\free$, let $\FreeSone_\rank([A])$ be the subposet of $\FreeS([A])$ consisting of all free splittings having exactly one non-trivial vertex group $[B]\geq [A]$. 
The aim of this section is to show that both $\FreeS(\ffa)$ and $\FreeSone([A])$ are contractible (Theorem~\ref{thm:contractibilty of relative free splitting} and Theorem~\ref{thm:contractibilty of FreeSone}).

The geometric realisation of $\FreeS(\ffa)$, denoted $\real{\FreeS(\ffa)}$, can also be described as the simplicial closure of relative outer space for $\ffa$, as defined in \cite{GL:OuterSpaceFreeProduct}. We view the relative outer space of $\ffa$ as a projectivised deformation space $\mcP\D$ and use results of \cite{GL:OuterSpaceFreeProduct} and \cite{GL:DeformationSpace} to obtain contractibility of $\FreeS(\ffa)$. We also use this technique to prove contractibility of another family of subcomplexes defined in Section~\ref{sec:FreeSone contractibility} as a step to prove Theorem~\ref{thm:contractibilty of FreeSone}. 
We now briefly recall the set up and outline for proving contractibility of deformation spaces \`{a} la Skora. We refer the reader to \cite{GL:OuterSpaceFreeProduct}, \cite{GL:DeformationSpace}, \cite{Clay:DeformationSpace} for details.  

\subsection{Contractibility \`{a} la Skora}\label{sec:Skora}

Let $\mathcal{T}$ be the set of all non-trivial minimal metric simplicial $\free$-trees. It can be given two different types of topologies, the Gromov--Hausdorff topology and the weak topology. Let $\D$ be unprojectivised relative outer space for $\ffa$. Then $\D$ is a subset of $\mathcal{T}$ and can also be given the two topologies. We denote by $\bar{\D}$ the closure of $\D$ in the space of non-trivial minimal $\mathbb{R}$-trees with action of $\free$, equipped with the Gromov--Hausdorff topology. 
In what follows, we will change views between these spaces and their projectivised versions. This is justified by the fact that in both the Gromov--Hausdorff and the weak topology, $\mathcal{T}$ is homeomorphic to $\mcP \mathcal{T}\times \mathbb{R}$, which allows to transfer results from one setting to the other (see the comments in \cite[p. 152]{GL:DeformationSpace}).

In \cite[Theorem~6.1]{GL:DeformationSpace}, it is shown that 
$\bar{\D}$ with the Gromov--Hausdorff topology is contractible. The key technique for proving contractibility is Skora's idea of deforming morphisms between metric trees: For $T, T' \in \mathcal{T}$, a map $f:T'\to T$ is called a \emph{morphism} if it is $\free$-equivariant and every edge of $T'$ can be written as a finite union of subsegments, each of which is mapped bijectively onto a segment in $T$. Given a morphism $f \colon T_0 \to T$, there is a canonical way of constructing \emph{intermediate trees} $T_t$ for $0 \leq t \leq \infty$ with $T_{\infty} = T$ (see \cite{ Sko:Deformationslengthfunctions}, \cite{GL:OuterSpaceFreeProduct}). The tree $T_t$ depends continuously on $f$ and $t$ in the Gromov--Hausdorff topology. There exist morphisms $\phi_t\colon T_0 \to T_t$ and $\psi_t \colon T_t \to T$ with $\psi_t \circ \phi_t = f$. In particular, this implies that if $T_0$ and $T$ are in $\D$, then so is $T_t$.

In order to prove contractibility of $\mcP\bar{\D}$, Guirardel--Levitt define a map $\rho \colon \bar{\D} \times [0, \infty] \to \bar{\D}$ as follows: 
Fix $T_0 \in \D$ with a minimal number of edge orbits---this means that every vertex has non-trivial stabiliser---and let $\mcC_0$ be the closed simplex in $\bar{\D}$ containing $T_0$. Then $\mcC_0$ is contractible. They associate to $T \in \bar{\D}$ a morphism $f_T \colon T_0(T) \to T$, where $T_0(T)$ is a metric tree in $\mcC_0$. Skora's deformation provides intermediate trees $T_t(T)$. Set $\rho(T, t) := T_t(T)$. Then $\rho(T, \infty) = T$ and $\rho(\bar{\D} \times \{0\}) \subseteq \mcC_0$.  Guirardel--Levitt show that under the choices they make, $\rho$ is continuous with respect to the Gromov--Hausdorff topology.

\subsection{Contractibility of $\FreeS(\ffa)$}

The relative free splitting complex $\real{\FreeS(\ffa)}$ can be seen as a subset of $\mcP \mathcal{T}$. The usual simplicial topology on $\real{\FreeS(\ffa)}$ agrees with the subspace topology it inherits from $\mcP\mathcal{T}$, equipped with the weak topology. This weak topology on $\real{\FreeS(\ffa)}$ restricts to the weak topology on $\mcP \D$. 

To prove that $\FreeS(\ffa)$ is contractible, we now show that $\rho$ restricts to a map on $\real{\FreeS(\ffa)}$ and that the restriction is continuous in the weak topology. 

\begin{theorem}
\label{thm:contractibilty of relative free splitting}
Let $\rank \geq 2$.
For all free factor systems $\ffa$ in $\free$, the poset of free splittings $\FreeS(\ffa)$ of $\free$ relative to $\ffa$ is contractible.
\end{theorem}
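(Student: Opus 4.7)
The plan is to adapt the Skora-style deformation argument of Guirardel--Levitt outlined in Section~\ref{sec:Skora} to the simplicial closure $\real{\FreeS(\ffa)}$. Since $\real{\FreeS(\ffa)}$ is exactly the simplicial closure of projectivised relative Outer space $\mcP\D$, it sits inside $\mcP\bar{\D}$, and the task is to restrict the map $\rho:\bar{\D}\times[0,\infty]\to \bar{\D}$ to it. I would choose the basepoint $T_0\in \mcC_0$ so that its vertex group system is exactly $\ffa$, i.e.\ every vertex of $T_0$ has non-trivial stabiliser. This is precisely what it means for $T_0$ to have a minimal number of edge orbits among trees in $\D$, and it places $T_0$ in the interior of a top-dimensional simplex $\mcC_0$ of $\real{\FreeS(\ffa)}$. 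In particular $\mcC_0$ is a closed simplex of $\real{\FreeS(\ffa)}$ and hence contractible.

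The first step is to verify that $\rho$ restricts to a map $\real{\FreeS(\ffa)}\times[0,\infty]\to \real{\FreeS(\ffa)}$. Given $T\in \real{\FreeS(\ffa)}$, the morphism $f_T:T_0(T)\to T$ has domain in $\mcC_0$ and target a simplicial $\free$-tree with trivial edge stabilisers. Skora's intermediate tree $T_t(T)$ is built by equivariantly identifying fibres of $f_T$ up to ``time'' $t$ through a sequence of folds. By the standard folding dictionary, $T_t(T)$ is a simplicial $\free$-tree whose edge stabilisers remain trivial (folding does not enlarge trivial edge stabilisers) and whose point stabilisers contain those of $T_0(T)$. Consequently $\ffa\sqsubseteq\V(T_0(T))\sqsubseteq\V(T_t(T))$, so $T_t(T)\in \real{\FreeS(\ffa)}$.

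The main obstacle is upgrading the continuity of $\rho$ from the Gromov--Hausdorff topology---in which \cite{GL:DeformationSpace} establishes it---to the weak (simplicial) topology on $\real{\FreeS(\ffa)}$. The strategy is local: a map out of a CW complex is continuous if and only if its restriction to every closed cell is continuous, and on each closed simplex of $\real{\FreeS(\ffa)}$ the weak and Gromov--Hausdorff topologies coincide. Fixing a closed simplex $\bar{\sigma}\subseteq \real{\FreeS(\ffa)}$, the restriction $\rho|_{\bar{\sigma}\times[0,\infty]}$ is Gromov--Hausdorff continuous by the previous step. Its image lies in the union of those simplices whose combinatorial type arises from Skora folds of the morphisms $f_T$ for $T\in \bar{\sigma}$. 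Exploiting compactness of $\bar{\sigma}\times[0,\infty]$ together with the fact that each fold changes the combinatorial type by a codimension-one collapse, one should be able to argue that only finitely many such simplices appear and that the transitions through common faces are continuous in the simplicial topology. This yields continuity of $\rho$ in the weak topology.

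Combining these steps, $\rho$ is a continuous homotopy on $\real{\FreeS(\ffa)}$ from the identity at $t=\infty$ to a map with image in $\mcC_0$ at $t=0$. Composing with a straight-line contraction inside the simplex $\mcC_0$ produces a contraction of $\real{\FreeS(\ffa)}$ to a point, establishing the desired contractibility. The only non-routine input is the continuity upgrade in paragraph three; everything else is a direct translation of Guirardel--Levitt's argument to the simplicial setting.
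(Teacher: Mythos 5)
Your overall strategy matches the paper's: restrict the Guirardel--Levitt Skora deformation $\rho$ to $\real{\FreeS(\ffa)}$, verify that the intermediate trees $T_t(T)$ stay in $\real{\FreeS(\ffa)}$, and upgrade continuity from the Gromov--Hausdorff topology to the weak topology by working simplex by simplex. The verification that $T_t(T)\in\real{\FreeS(\ffa)}$ is essentially what the paper does, though your stated reason for trivial edge stabilisers (``folding does not enlarge trivial edge stabilisers'') is not quite a correct general principle --- folds can create edge stabilisers. What actually forces triviality is that the morphism $\psi_t\colon T_t(T)\to T$ exists and $T$ has trivial arc stabilisers; this is the reason the paper gives, and also how it invokes \cite[Lemma~4.3]{GL:OuterSpaceFreeProduct} to see that $T_t(T)$ is simplicial.

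The genuine gap is in your continuity upgrade. You correctly identify that one must show the image of $\rho|_{\bar\sigma\times[0,\infty]}$ is contained in finitely many simplices, but the compactness argument you sketch does not establish this: a Gromov--Hausdorff-compact subset of $\mcP\bar\D$ can a priori meet infinitely many open simplices, and the claim that ``each fold changes the combinatorial type by a codimension-one collapse'' is not accurate for Skora paths (the combinatorial type can jump by several edges at once and can also pass to higher-dimensional simplices). The paper closes this gap by citing \cite[Lemma~4.3]{GL:OuterSpaceFreeProduct}, which directly yields that the intermediate trees $T_t(T)$, over all $t$ and all $T$ in a closed simplex, realise only finitely many equivariant homeomorphism types; combined with \cite[Proposition~5.2]{GL:DeformationSpace} (weak and Gromov--Hausdorff topologies agree on finite unions of simplices), one gets weak continuity. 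Without that input your step three remains a plausibility argument rather than a proof.
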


\begin{proof}
Up to projectivising, $\FreeS(\ffa)$ is a subset of $\bar{\D}$. Let  $\rho \colon \real{\FreeS(\ffa)} \times [0, \infty] \to \bar{\D}$ be the restriction of the map defined in \cite{GL:DeformationSpace}. We will show that the image of $\rho$ is contained in $\real{\FreeS(\ffa)}$ and that the map is continuous with respect to the weak topology on $\real{\FreeS(\ffa)}$. 
The weak topology agrees with the simplicial topology on $\real{\FreeS(\ffa)}$, so it is sufficient to prove the continuity of the restrictions $\rho_{\sigma} \colon \sigma \times [0, \infty] \to \real{\FreeS(\ffa)}$, where $\sigma$ is a closed simplex of $\real{\FreeS(\ffa)}$. 
 
We first show that for $T \in \real{\FreeS(\ffa)}$, the intermediate trees $T_t(T)$ are also in $\real{\FreeS(\ffa)}$. 
Let $f_T \colon T_0(T) \to T$ be the morphism between $T_0(T), T \in \real{\FreeS(\ffa)}$ as defined in \cite{GL:DeformationSpace}. We have morphisms $\phi_t\colon T_0(T) \to T_t$ and $\psi_t \colon T_t \to T$ with $\psi_t \circ \phi_t = f_T$, so \cite[Lemma~4.3]{GL:OuterSpaceFreeProduct} implies that $T_t(T)$ is a simplicial tree. Since $\phi_t$ is a morphism, in particular equivariant, we have that $\ffa \sqsubseteq \V(T_t(T))$. Since $\psi_t$ is also a morphism and $T$ has trivial edge stabilisers, the same is true for $T_t(T)$. Therefore, $T_t(T) \in \real{\FreeS(\ffa)}$.    

Another consequence of \cite[Lemma~4.3]{GL:OuterSpaceFreeProduct} is that there are only finitely many possibilities for the intermediate tree $T_t(T)$, up to equivariant homeomorphism. This implies that for any closed simplex $\sigma$ in $\real{\FreeS(\ffa)}$, the set of intermediate trees $T_t(T) = \rho(T,t)$, for $t \geq 0$ and $T \in \sigma$ is contained in a finite union of simplices of $\real{\FreeS(\ffa)}$.  
On any finite union of simplices in $\real{\FreeS(\ffa)}$, a set is open with respect to the Gromov--Hausdorff topology if and only if it is open with respect to the weak topology (see \cite[Proposition~5.2]{GL:DeformationSpace} and the remark below it). Hence, the continuity of $\rho_{\sigma}$ with respect to the weak topology on $\real{\FreeS(\ffa)}$ follows from the fact that $\rho_{\sigma}$ is continuous in the Gromov--Hausdorff topology (\cite[Corollary~6.3]{GL:DeformationSpace} applied to $\real{\FreeS(\ffa)}$).
\end{proof}

\subsection{Contractibility of $\FreeSone([A])$}
\label{sec:FreeSone contractibility}
A similar argument as in the proof of Theorem~\ref{thm:contractibilty of relative free splitting} does not quite work to prove contractibility of $\FreeSone([A])$ (see Remark~\ref{rem:counterexample}). Instead, in this section we write $\FreeSone([A])$ as a union of subposets $X(A, A_1, \ldots, A_m : \free)$, defined below, each of which is contractible. We then use the nerve of this covering to prove contractibility of $\FreeSone([A])$.

Let $G$ be a graph in $L$, the spine of $\CV$, and $B$ a finitely generated subgroup of $\free$. 

\begin{definition}[$B|G$]
In \cite{BF:FreeFactorComplex}, Bestvina and Feighn define $B|G$ to be the core of the covering space of $G$ corresponding to $[B]$. There is a canonical immersion from $B|G$ into $G$ which gives $B|G$ a marking. We say \emph{$G$ has a subgraph with fundamental group $B$} if $B|G \hookrightarrow G$ is an embedding. \end{definition}
For example, take $G$ to be a rose with 3 petals and labels $a,\, b,\, c$. Consider the subgroups $B_1 = \la abc \ra$ and $B_2 = \la a, b \ra$ of $\free_3 = \la a, b, c\ra$. Then $B_2|G \hookrightarrow G$ is an embedding and we say $G$ has a subgraph with fundamental group $B_2$. But $B_1|G \hookrightarrow G$ is an immersion that is not an embedding. See Figure~\ref{fig:B|G}. 

\begin{figure}[h]
  \centering{ 
   \def\svgscale{.8}
    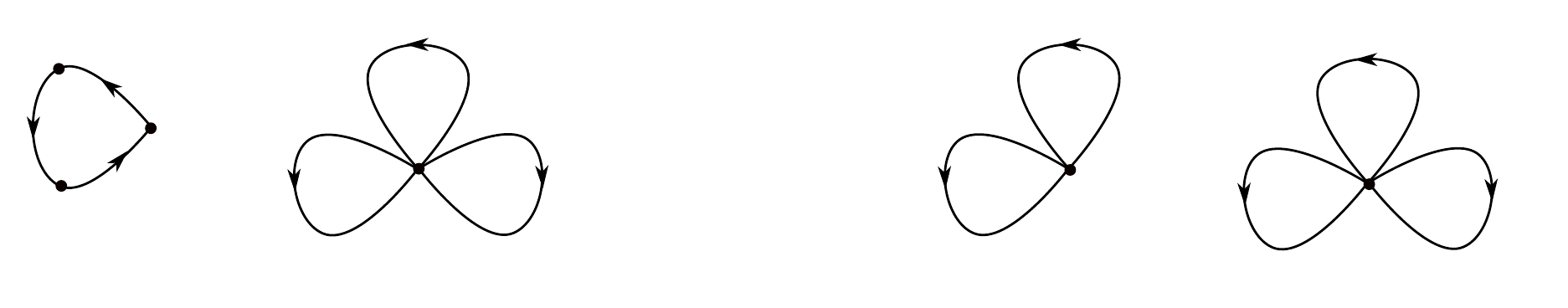
    \caption{Examples of $B|G$ for $G \in L$. Here $\free_3 = \la a, b, c \ra$, $B_1 = \la abc \ra$ and $B_2= \la a,b \ra$.}
    \label{fig:B|G}}
\end{figure}

We extend the above definition to any free splitting $S \in \FreeS$. 

\begin{definition}[$B|S$] \label{defn:B|S} Let $B$ be a proper free factor of $\free$ such that for every $V \in \V(S)$, the intersection $[B] \cap V$ is either trivial or equal to $V$.  Consider a blow-up $\hat{S} \in L$ of $S$ obtained by blowing up all the vertex groups of $S$ to roses. Then there is an immersion $B|\hat{S} \hookrightarrow \hat{S}$. We say \emph{$S$ has a subgraph with fundamental group $B$} or \emph{$B|S$ is a subgraph of $S$} if $B|\hat{S} \hookrightarrow \hat{S}$ is an embedding for (some such) $\hat{S} \in L$. Define $B|S$ to be the graph obtained by collapsing in $B|\hat{S}$ the roses corresponding to each $V \in \V(S)$ contained in $[B]$.  \end{definition}  

In order to see that $B|S$ is well-defined, consider two splittings $\hat{S}$ and $\hat{S}'$ defined as in Definition~\ref{defn:B|S}. Such splittings can only differ in the roses corresponding to the vertex stabilisers of $S$. Thus collapsing the roses for $V \in \V(S)$ in $B|\hat{S}$ and $B|\hat{S}'$ yields the same graph $B|S$. See Figure~\ref{fig:B|S} for some examples of $B|S$. 

\begin{figure}[h]
  \centering{ 
   \def\svgscale{1}
    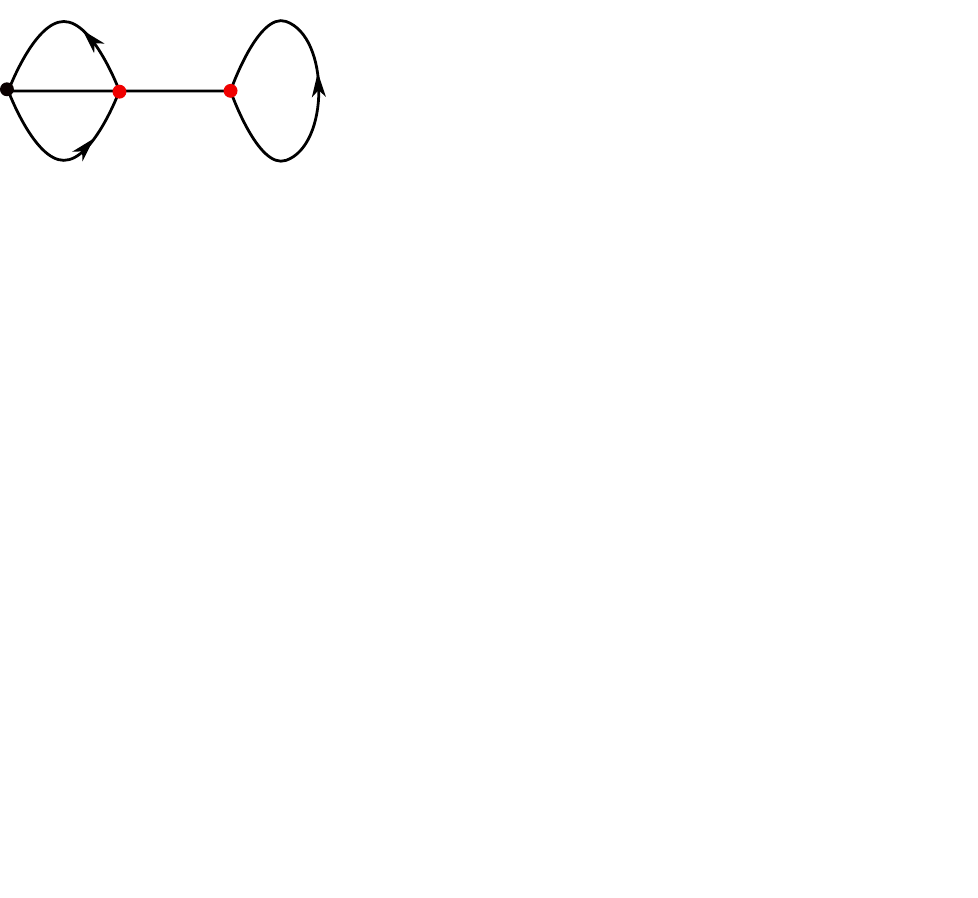
    \caption{Examples of $B|S$ for $S \in \FreeS$. Here $\free_7 = \la a, b, c, d, e, f \ra$, $B_1 = \la d, a, b \ra$, $B_2= \la d, c \ra$ and $B_3 = \la d, f \ra$.}
    \label{fig:B|S}}
\end{figure}

\begin{definition}\label{defn:Poset for free factor systems} For a chain of free factors of $\free$ given by $ A \subset A_1 \subset \ldots \subset A_l \subset B_{0} \subset \ldots \subset B_m  $,
\begin{itemize}
 \item let \emph{$X(A : B_{0}, \ldots, B_m)$} be the poset of all free splittings $S$ such that $\V(S)$ is equal to $[A]$ and $B_i|S$ is a subgraph of $S$ for every $0 \leq i \leq m$;
 \item let \emph{$X(A, A_1, \ldots, A_l : B_{0}, \ldots, B_m)$} be the poset of all free splittings $S$ such that one has $\V(S) \in\nolinebreak \{[A], [A_0], \ldots, [A_l]\}$ and $B_i|S$ is a subgraph of $S$ for every $0 \leq i \leq m$.
\end{itemize}
\end{definition}

\begin{lemma}\label{lem:X(A:A1toAm)}
$X(A : B_0, \ldots, B_m)$ is contractible. 
\end{lemma}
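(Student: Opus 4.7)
The plan is to adapt the Skora-deformation argument from the proof of Theorem~\ref{thm:contractibilty of relative free splitting} to the further constrained subposet $X \coloneqq X(A : B_0, \ldots, B_m)$. I would view $\real{X}$ as a subset of the Gromov--Hausdorff closure $\bar{\D}$ of the relative outer space for $(\free, \{[A]\})$ and apply the Guirardel--Levitt contraction $\rho \colon \bar{\D} \times [0,\infty] \to \bar{\D}$ from \cite{GL:DeformationSpace} with a basepoint $T_0 \in X$. Non-emptiness of $X$ is established by choosing a basis $\{a_1, \ldots, a_\rank\}$ of $\free$ compatible with the chain, so that $A = \la a_1, \ldots, a_{\rk(A)}\ra$ and $B_i = \la a_1, \ldots, a_{\rk(B_i)}\ra$ for each $i$. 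The rose splitting $T_0$ with a single vertex $v_A$ of group $A$ and petals $a_{\rk(A)+1}, \ldots, a_\rank$ at $v_A$ then lies in $X$, with each $B_i|T_0$ realised as an embedded subrose at $v_A$.

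The key step is to show that $\rho(X \times [0,\infty]) \subseteq X$. Preservation of the vertex group system $\V(T_t) = \{[A]\}$ along Skora intermediate trees $T_t$ is verified exactly as in the proof of Theorem~\ref{thm:contractibilty of relative free splitting}. For each $i$, the Guirardel--Levitt morphism $f_T \colon T_0(T) \to T$ restricts to a $B_i$-equivariant morphism between the minimal $B_i$-invariant subtrees $T_0(T)^{B_i} \to T^{B_i}$, and the intermediate trees of Skora's deformation applied to this restriction can be identified with the $B_i$-invariant subtrees $T_t(T)^{B_i}$ of the global intermediate trees. The $\free$-equivariance of Skora's folds then ensures that the condition ``distinct $\free$-translates of $T^{B_i}$ meet in at most one point'', which holds at $t = 0$ and $t = \infty$ by the embedding hypothesis on $T_0$ and $T$, is preserved throughout the deformation. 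This translates to the embedding of $B_i|T_t$ in $T_t/\free$ for every intermediate $t$, so $T_t \in X$.

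Continuity of the restricted $\rho$ in the weak topology on $\real{X}$ then follows from the same comparison argument as in the proof of Theorem~\ref{thm:contractibilty of relative free splitting}: the image of any closed simplex of $X$ under $\rho$ lies in only finitely many simplices of $\real{X}$, on which the Gromov--Hausdorff and weak topologies agree, and $\rho$ is Gromov--Hausdorff continuous by \cite[Corollary~6.3]{GL:DeformationSpace}. Combined with $\rho(\cdot, 0) \subseteq \mcC_0$, where $\mcC_0$ is the closed simplex of $X$ containing $T_0$, this exhibits $X$ as deformation retracting onto the contractible set $\mcC_0$, establishing the result.

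The main obstacle is the verification, in the second step, that Skora's folds simultaneously preserve the embedding conditions for all the $B_i$. One must ensure that $f_T$ can be chosen compatibly with the whole nested chain of $B_i$-invariant subtrees in $T_0(T)$ and $T$, and that $\free$-equivariance of the folds suffices to prevent identifications across distinct $\free$-orbits of any $B_i$-subtree.
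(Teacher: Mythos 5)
Your proposal follows essentially the same route as the paper's own proof: restrict the Guirardel--Levitt/Skora contraction $\rho$ to $\real{X(A : B_0, \ldots, B_m)}$ with a minimal basepoint $T_0$ in that poset, check that the intermediate trees $T_t(T)$ stay in the poset, and carry over the continuity argument from Theorem~\ref{thm:contractibilty of relative free splitting}. The verification you flag as the main obstacle is resolved in the paper precisely by the observation that $f_T$ automatically restricts, by equivariance, to a morphism $B_i|T_0(T) \to B_i|T$, so that the Skora folds (of paths up to length $t$) preserve an embedded subgraph with fundamental group $B_i$ at every intermediate time.
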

\begin{proof}
Since $\real{X(A : B_0, \ldots, B_m)}$ is a subcomplex of $\mcP\D$, the projectivised Outer space relative to $[A]$, we can consider the restriction $\rho\colon \real{X(A : B_0, \ldots, B_m)} \times [0, \infty] \to \mcP\D$ of the map $\rho$ defined in Section~\ref{sec:Skora}. Let $T_0 \in X(A : B_0, \ldots, B_m)$ be a tree with minimal number of edge orbits. For any $T \in X(A : B_0, \ldots, B_m)$, let $f_T\colon T_0(T) \to T$ be the morphism as defined in \cite{GL:OuterSpaceFreeProduct}.  
As in the proof of Theorem~\ref{thm:contractibilty of relative free splitting}, we only need to show that the intermediate trees $T_t(T)$ are also in $X(A : B_0, \ldots, B_m)$. 
For $0 \leq i \leq m$, both $T_0(T)$ and $T$ have a subgraph with fundamental group $B_i$. By equivariance, the morphism $f_T$ restricts to a morphism between the lifts of $B_i|T_0(T)$ in $T_0(T)$ and $B_i|T$ in $T$.  
The map $f_T$ folds paths in $T_0(T)$ that have the same image in $T$. By definition, $T_t(T)$ is obtained by performing only folds of paths up to length $t$ (see \cite[Section 3.1]{GL:OuterSpaceFreeProduct}). As $f_T$ maps $B_i|T_0(T)$ to $B_i|T$, it follows that $T_t(T)$ also has a subgraph with fundamental group $B_i$. Thus $T_t(T) \in X(A : B_0, \ldots, B_m)$.    
\end{proof}

\begin{lemma}
\label{lem:X(B0toBl:A0toAm) contractible} 
$X(A, A_1, \ldots, A_l : B_0, \ldots, B_m)$ is contractible. In particular, $X(A, A_1, \ldots, A_l : \free)$ is contractible. 
\end{lemma}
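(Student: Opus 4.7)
My plan is to show that the inclusion
\[
\iota \colon X(A : B_0, \ldots, B_m) \hookrightarrow X(A, A_1, \ldots, A_l : B_0, \ldots, B_m)
\]
is a homotopy equivalence; since the domain is contractible by Lemma~\ref{lem:X(A:A1toAm)}, this gives the result, and the ``in particular'' assertion falls out by taking $m = 0$ and $B_0 = \free$ (so $\free|S = S$ is a subgraph trivially). The strategy is to apply Lemma~\ref{lem:QuillenFibres} in its $P_{\geq x}$-variant, which requires that the fibre $\iota^{-1}(P_{\geq x}) = \{y \in X(A : B_0, \ldots, B_m) : y \geq x\}$ be contractible for every $x$ in the larger poset.

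When $x$ already lies in $X(A : B_0, \ldots, B_m)$, the fibre is a cone on its minimum $x$ and hence contractible. The substantive case is $x \in X(A_j : B_0, \ldots, B_m)$ for some $1 \leq j \leq l$. Here $x/\free$ has a unique non-trivial vertex $v$ with stabiliser $A_j$, and any refinement $y \geq x$ with $\V(y) = \{[A]\}$ must place its $A$-vertex inside the preimage of $v$ under the collapse $y/\free \to x/\free$. Unwinding this, $y$ is obtained from $x$ by blowing up $v$ with some free splitting $T$ of the free group $A_j$ whose only non-trivial vertex group is $A$, possibly combined with further refinement at the other (trivial-stabiliser) vertices of $x$.

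To isolate the relevant part of the fibre $F_x$, I would pass to the subposet $F_{x,v} \subseteq F_x$ consisting of those $y$ that modify $x$ only at $v$, together with the poset map $\pi \colon F_x \to F_{x,v}$ that collapses every refinement outside the blow-up at $v$. Since $\pi(y) \leq y$, Lemma~\ref{lem:HomotopicPosetMaps} makes $\pi$ and the inclusion $F_{x,v} \hookrightarrow F_x$ mutually inverse homotopy equivalences, so $F_x \simeq F_{x,v}$. The subposet $F_{x,v}$ is in turn in natural poset bijection with the analogous poset of free splittings of the free group $A_j$ with vertex group $[A]$, attached to the length-one chain $A \subset A_j$; the subgraph condition is vacuous as $A_j|T = T$. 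Applying Lemma~\ref{lem:X(A:A1toAm)} with $A_j$ in place of $\free$ then gives contractibility of this poset, completing the fibre analysis.

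The main obstacle I anticipate is verifying that blowing up $v$ automatically preserves each subgraph condition $B_k|y$: since the chain provides $A_j \subseteq A_l \subseteq B_k$, the core $B_k|y$ in $y/\free$ should simply replace the vertex $v$ of $B_k|x$ by the full blown-up subgraph $T/A_j$ and remain embedded, because $B_k|x$ already was. A secondary point needing care is that Lemma~\ref{lem:X(A:A1toAm)} is stated for $\free$, but its proof (via Skora's deformation in the relevant relative Outer space) goes through verbatim for any finitely generated free group, which is precisely what legitimises the appeal to it with the free group $A_j$ in the fibre computation.
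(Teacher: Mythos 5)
Your overall strategy---apply Quillen's fibre lemma (in the $Q_{\geq x}$ form) to the inclusion $\iota\colon X(A:B_0,\ldots,B_m)\hookrightarrow X(A,A_1,\ldots,A_l:B_0,\ldots,B_m)$ and reduce each fibre to a ``blow-up only at the $A_j$-vertex'' subposet $F_{x,v}$---is a genuinely different route from the paper's. The paper instead inducts on $l$ and pastes: writing $X(A,\ldots,A_l:\vec{B})=X_{l-1}\cup X_l$ and exhibiting a monotone retraction $X_{l-1,l}\cup X_l\to X_l$, $S\mapsto S/(A_l|S)$, so that the whole thing is contractible by the Mayer--Vietoris/pasting argument applied to three contractible pieces. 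The case $x\in X(A:\vec B)$ in your proof is fine (the fibre is a cone on $x$), and your monotone retraction $F_x\to F_{x,v}$ is also fine: collapsing the forests over the trivial vertices of $x$ preserves all the defining conditions, including the subgraph conditions $B_k|{-}$.

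The gap is in the identification of $F_{x,v}$. An element of $F_{x,v}$ is not just a free splitting $T$ of $A_j$ with vertex group $[A]$; it also records how the half-edges of $x$ incident to $v$ are re-attached to $T$. Concretely, each $A_j$-orbit of directions at $\tilde v$ must be assigned ($A_j$-equivariantly) a point of $T$, and different assignments yield non-isomorphic refinements $y$ of $x$ that are \emph{incomparable} in $F_{x,v}$ (a collapse between two blow-ups with the same $T$ is forced to be the identity). So the forgetful map $F_{x,v}\to\{A_j\text{-splittings with vertex group }[A]\}$ has discrete, non-singleton fibres and is not a poset bijection, nor in general a homotopy equivalence. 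In particular, Lemma~\ref{lem:X(A:A1toAm)} applied ``with $A_j$ in place of $\free$'' does not give what you need. What is actually required is contractibility of the poset of free splittings of $A_j$ relative to $[A]$ \emph{with marked points} (one per half-edge at $v$). This is almost certainly true and can presumably be proved by a Skora-path argument in the style of Proposition~\ref{prop:p_2^-1(S)FFS}---whose setup (blow-ups of a fixed splitting at its non-trivial vertices, with the attaching data tracked implicitly) is exactly the relevant one---but that is a non-trivial extra step which your sketch omits by asserting the bijection. The paper's induction-and-pasting argument sidesteps the issue entirely, since the retraction lands in $X_l$, which is literally the poset that Lemma~\ref{lem:X(A:A1toAm)} handles, with no marked-point bookkeeping.
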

\begin{proof}
The proof is by induction on $l$. By Lemma~\ref{lem:X(A:A1toAm)}, the claim holds true for all $m$ if $l=0$. Now assume that it holds true up to $l-1$.

Then in particular, the posets 
\begin{gather*}
X_{l-1} \coloneqq X(A, A_1, \ldots, A_{l-1} : B_0, \ldots, B_m),\hspace{0.5 cm}  X_{l-1,l} \coloneqq X(A, A_1, \ldots, A_{l-1} : A_l, B_0, \ldots, B_m)\\
 \text{and} \hspace{0.5 cm} X_l \coloneqq X(A_l:B_0, \ldots, B_m)
\end{gather*}
are contractible. By definition $X_{l-1,l}$ is the subposet of $X_{l-1}$ consisting of all those $S \in X_{l-1}$ that collapse to some free splitting in $X_l$. For each such $S \in X_{l-1}$, there is a unique maximal splitting $S' \in X_l$, on which $S$ collapses, namely the subgraph $S' = S/(A_l|S)$ obtained by collapsing $A_l|S$.  Hence, the map
\begin{align*}
X_l\cup X_{l-1,l} &\to X_l\\
S&\mapsto
\begin{cases}
S' &,\, S\in X_{l-1,l},\\
S &,\, S\in X_l,
\end{cases}
\end{align*}
induces a deformation retraction $\real{X_{l-1,l}\cup X_l}\to \real{X_l}$.

It follows that $\real{X(A, A_1, \ldots, A_l : B_0, \ldots, B_m)}  = \real{X_{l-1} \cup X_l}$ is obtained by gluing together $\real{X_{l-1}}$ and $\real{X_l}$ along $\real{X_{l-1,l}}$. Now $\real{X_{l-1}}, \real{X_{l-1,l}}$ and $\real{X_l}$ are contractible by assumption, whence the claim follows.
\end{proof}

\begin{remark}\label{rem:counterexample}
We would like to remark that Lemma~\ref{lem:X(B0toBl:A0toAm) contractible} cannot be proved using Skora paths as was done in the proof of Lemma~\ref{lem:X(A:A1toAm)}. It is possible to construct Skora paths between $T_0, T \in X(A, A_1:\free)$ such that there exists an intermediate tree $T_t$ with $\V(T_t) \notin \{[A], [A_1]\}$. For the same reason, Skora paths are not useful to show contractibilty of $\FreeSone([A])$ directly. 
\end{remark}

We are now ready to prove that $\FreeSone([A])$ is contractible. 

\begin{theorem}
\label{thm:contractibilty of FreeSone}
Let $\rank \geq 2$.
For all proper free factors $A$ in $\free$, the poset $\FreeSone_\rank([A])$ consisting of all free splittings having exactly one non-trivial vertex group $[B]\geq [A]$ is contractible.
\end{theorem}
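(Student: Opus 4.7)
The plan is to apply the Nerve Theorem (Lemma~\ref{lem:NerveTheorem}) using a covering of $\FreeSone([A])$ by subposets of the form given by Lemma~\ref{lem:X(B0toBl:A0toAm) contractible}. For each finite (possibly empty) chain $\mathcal{F} = ([C_1] \subsetneq \cdots \subsetneq [C_l])$ of proper free factors of $\free$ strictly containing $[A]$, I define $\mathcal{U}_\mathcal{F} := X(A, C_1, \ldots, C_l : \free)$, with the convention $\mathcal{U}_\emptyset := X(A : \free)$. By Lemma~\ref{lem:X(A:A1toAm)} and Lemma~\ref{lem:X(B0toBl:A0toAm) contractible}, each $\mathcal{U}_\mathcal{F}$ is contractible, and its order complex is a subcomplex of $\Delta(\FreeSone([A]))$.

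Step one is to verify that the family $\{\Delta(\mathcal{U}_\mathcal{F})\}$ covers $\Delta(\FreeSone([A]))$. Any simplex of $\Delta(\FreeSone([A]))$ is a chain $S_0 < S_1 < \cdots < S_k$ of splittings, each $S_i$ having a unique non-trivial vertex group $[B_i] \supseteq [A]$. Since $S_{i+1}$ collapses to $S_i$ and collapsing combines adjacent vertex stabilisers, one has $[B_0] \supseteq [B_1] \supseteq \cdots \supseteq [B_k] \supseteq [A]$. Letting $\mathcal{F}$ be the chain formed by the distinct $[B_i]$ that are strictly above $[A]$, the whole simplex lies in $\mathcal{U}_\mathcal{F}$. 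Step two is to show that finite intersections are contractible: for chains $\mathcal{F}_1, \ldots, \mathcal{F}_r$, one has $\bigcap_i \mathcal{U}_{\mathcal{F}_i} = X\bigl(A, \bigcap_i \mathcal{F}_i : \free\bigr)$, where $\bigcap_i \mathcal{F}_i$ is a subset of each chain $\mathcal{F}_i$ and hence itself a chain. Lemma~\ref{lem:X(B0toBl:A0toAm) contractible} again gives contractibility.

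To conclude, observe that $\mathcal{U}_\emptyset \subseteq \mathcal{U}_\mathcal{F}$ for every $\mathcal{F}$, so every finite subfamily of the covering has non-empty common intersection (containing $\mathcal{U}_\emptyset$). Hence the vertex $\emptyset$ is a cone apex of the nerve $\mathcal{N}$, so $\mathcal{N}$ is contractible. The Nerve Theorem then yields $\real{\FreeSone([A])} \simeq \mathcal{N}$, and the theorem follows. The main technical ingredient is Lemma~\ref{lem:X(B0toBl:A0toAm) contractible}, for which the Skora-path argument of Lemma~\ref{lem:X(A:A1toAm)} does not suffice (cf.\ Remark~\ref{rem:counterexample}). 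The main conceptual point one has to verify carefully is the monotonicity of vertex groups under collapse, which secures the covering property in step one.
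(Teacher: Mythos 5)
Your proof is correct and follows essentially the same strategy as the paper: cover $\FreeSone([A])$ by the subposets $X(A, A_1, \ldots, A_l : \free)$ indexed by chains of free factors above $A$, invoke Lemma~\ref{lem:X(B0toBl:A0toAm) contractible} for contractibility of these pieces and of all finite intersections, note that every piece contains $X(A:\free)$ so the nerve is a cone, and conclude via the Nerve Theorem. The only cosmetic difference is that you phrase the intersection as the set-theoretic intersection $\bigcap_i \mathcal{F}_i$ of the index chains, whereas the paper speaks of the ``longest common subchain''; these coincide since a common subchain of chains in a fixed poset is simply their intersection as sets.
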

\begin{proof}
Each simplex $\sigma$ in the order complex $\Delta(\FreeSone(A))$ is of the form $S_0\to \ldots \to S_k$ where each $S_i$ is a free splitting of $\free$ collapsing to $S_{i+1}$. Furthermore, the vertex group systems of these free splittings form a chain $\V(S_0) \leq \ldots \leq \V(S_k)$ of free factor systems such that $[A] \leq \V(S_i)$ for all $i$.
It follows that $\sigma$ is contained in $\Delta(X(A, \V(S_0), \ldots  , \V(S_k):\free))$. Hence the realisation $\real{\FreeSone([A])}$ can be written as a union
\begin{equation*}
\real{\FreeSone([A])}=\bigcup_{A \subset A_1 \subset \ldots  \subset A_l} \real{X(A, A_1, \ldots  , A_l :\free)}.
\end{equation*}

By Lemma~\ref{lem:X(B0toBl:A0toAm) contractible}, each $\real{X(A, A_1, \ldots  , A_l :\free)}$ is contractible. Furthermore, one has
\begin{equation*}
\real{X(A, A_1, \ldots  , A_l :\free)} \cap \real{X(A, B_1, \ldots  , B_m :\free)}= \real{X(A, C_1, \ldots  , C_k :\free)}
\end{equation*}
where $[A] < [C_1] < \ldots  < [C_k]$ is the longest common subchain of $[A] < [A_1]<  \ldots  < [A_l]$ and $[A]< [B_1]<  \ldots  <[B_m]$. Consequently, all intersections of these sets are contractible and Lemma~\ref{lem:NerveTheorem} implies that $\real{\FreeSone([A])}$ is homotopy equivalent to the nerve of this covering. However, as all of these sets contain $\real{X(A:\free)}$, they intersect non-trivially, so this nerve complex is contractible.
\end{proof} 

\section{Factor complexes at infinity}\label{sec:homeq}
\label{sec:complexes at infinitiy}

In this short section, we connect the factor complexes considered so far to subposets of $\FreeS$ which sit at the boundary of the simplicial completion of $\CV$. Let $\FreeSone_{\rank}$ be the subposet of $\FreeS$ given by free splittings that have exactly one non-trivial vertex group.
For fixed $\rank\geq 2$, we define
\begin{align*}
\bFreeS\coloneqq \FreeS \bsl L,&& \FreeSone\coloneqq\FreeSone_{\rank}, && \bFreeSred\coloneqq \FreeSred \bsl K.
\end{align*}

The next proposition follows almost immediately from the contractibility of the relative free splitting complexes established in the preceding section.

\begin{proposition}\label{prop:HomotopyEquivalence} 
Let $\rank \geq 2$.
\begin{enumerate} 
\item $\bFreeS$ is homotopy equivalent to $\FFS$. 
\item $\FreeSone$ is homotopy equivalent to $\FreeF$. 
\end{enumerate}
\end{proposition}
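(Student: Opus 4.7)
The plan is to apply the Quillen fibre lemma (Lemma~\ref{lem:QuillenFibres}) to the vertex-group-system map
\[ \V \colon S \mapsto \V(S), \]
which is the common geometric ingredient for both statements. For part (1) this gives a map $\bFreeS \to \FFS$, and for part (2) its restriction is a map $\FreeSone \to \FreeF$ (well-defined because every $S \in \FreeSone$ has a single non-trivial vertex group, which is a proper free factor since $S$ has at least one edge).

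The first check is that $\V$ is compatible with the poset structures. Recall that $S \leq S'$ in $\FreeS$ means $S'$ collapses to $S$, and collapsing edges only amalgamates vertex stabilisers into larger ones. Hence $\V(S') \sqsubseteq \V(S)$, so $\V$ is in fact order-\emph{reversing}. I would therefore view $\V$ as a poset map $\bFreeS \to \FFS^{op}$ (resp.\ $\FreeSone \to \FreeF^{op}$) and invoke the version of Lemma~\ref{lem:QuillenFibres} with fibres over $Q_{\geq x}$, as pointed out in the remark following that lemma. Using $\Delta(P) \cong \Delta(P^{op})$, a homotopy equivalence with $\FFS^{op}$ is the same as a homotopy equivalence with $\FFS$.

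The key step is to identify the fibres. For $\ffa \in \FFS$, the fibre is
\[ \V^{-1}\!\left(\FFS_{\geq \ffa}\right) = \{\, S \in \bFreeS \mid \ffa \sqsubseteq \V(S) \,\} = \FreeS(\ffa), \]
where the last equality holds because $\ffa$ is non-empty and consists of non-trivial free factors, so the condition $\ffa \sqsubseteq \V(S)$ forces $S \notin L$ automatically. Theorem~\ref{thm:FreeSplittingIntroduction} says precisely that each $\FreeS(\ffa)$ is contractible, and Lemma~\ref{lem:QuillenFibres} then gives $\real{\bFreeS} \simeq \real{\FFS}$. The argument for (2) is entirely parallel: for $[A] \in \FreeF$, the fibre is
\[ \V^{-1}\!\left(\FreeF_{\geq [A]}\right) = \{\, S \in \FreeSone \mid [A] \leq \V(S) \,\} = \FreeSone([A]), \]
which is contractible by Theorem~\ref{thm:contractibilty of FreeSone}, yielding $\real{\FreeSone} \simeq \real{\FreeF}$.

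No serious obstacle is anticipated: the heavy lifting has already been done in Section~\ref{sec:freesplittingposets}, where the contractibility of the relative free splitting complexes and of the complexes $\FreeSone([A])$ was established via Skora-path/nerve arguments. The only points requiring care here are the (routine) verification that $\V$ is order-reversing and the identification of its fibres with the posets studied in the previous section; both are essentially unpacking definitions.
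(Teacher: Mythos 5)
Your proof is correct and follows essentially the same route as the paper: both define the vertex-group-system map $\V$, view it as a poset map to $\FFS^{op}$ (invoking $\Delta(P)\cong\Delta(P^{op})$ and the $Q_{\geq x}$ variant of the fibre lemma), and identify the fibres with $\FreeS(\ffa)$ resp.\ $\FreeSone([A])$, whose contractibility was established in Section~\ref{sec:freesplittingposets}. The small checks you flag (order-reversal, fibres landing outside $L$) are exactly the right ones.
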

\begin{proof}
Assigning to each splitting $S \in \bFreeS$ the free factor system $\V(S)$ given by its non-trivial vertex stabilisers defines a poset map $f\colon \bFreeS \to \FFS^{op}$. For any poset $P$, there is a natural isomorphism of the order complexes $\Delta P$ and $\Delta P^{op}$. Hence, $\Delta(\FFS^{op})\cong \Delta(\FFS)$ which allows us to interpret $f$ as an order-inverting map $f\colon \bFreeS \to\nolinebreak \FFS$.

For any free factor system $\ffa$ in $\free$, the fibre $f^{-1}((\FFS)_{\geq \ffa})$ is equal to the poset $\FreeS(\ffa)$ of free splittings relative to $\ffa$. This poset is contractible by Theorem~\ref{thm:contractibilty of relative free splitting}.

The image $f(\FreeSone)$ is equal to $\FreeF$, so we can consider its restriction $g:\FreeSone\to \FreeF$. Now for any conjugacy class $[A]$ of free factors in $\free$, the preimage $g^{-1}((\FreeF)_{\geq [A]})$ is given by the intersection $\FreeS([A]) \cap \FreeSone = \FreeSone_\rank([A])$, so Theorem~\ref{thm:contractibilty of FreeSone} finishes the proof.
\end{proof}

\begin{remark}
The map $f\colon \bFreeS \to \FFS$ defined in the proof of Proposition~\ref{prop:HomotopyEquivalence} has already been used to study the geometry of the complexes in question:

In \cite[Section 6.2]{HM:RelativeFree}, the authors define ``projection maps'' $\pi:\FreeS\to\FFS$ and show that these maps are Lipschitz with respect to the metrics on the 1-skeleta of $\FreeS$ and $\FFS$ assigning length $1$ to each edge. The map $f$ can be seen as the restriction of such a projection map to $\bFreeS$ and hence is Lipschitz as well.

Using the language of sphere systems (see Section~\ref{sec:spheresystems}), Hilion and Horbez in \cite[Section 8]{HH:SurgeryPaths} consider the poset $\FreeS^c\subset\FreeSone$ of all free splittings whose corresponding graph of groups is a rose with non-trivial vertex group, i.e. those free splittings of $\FreeSone$ having only one orbit of vertices. They show that the inclusion $\FreeS^c\subset\FreeSone$ defines a quasi-isometry of the 1-skeleta and that the restriction $f:\FreeS^c\to \FreeF$ has quasi-convex fibres. This is used to deduce hyperbolicity of $\FreeF$.
\end{remark}

\section{Higher connectivity of factor complexes}\label{sec:productspace}

In this section, we will combine the results obtained so far in order to establish higher connectivity properties of the various complexes defined in the introduction. Fix $\rank\geq 2$ and define $\bFreeS$, $\FreeSone$ and $\bFreeSred$ as in Section~\ref{sec:complexes at infinitiy}.

\label{sec:freefactorsystems}
Let $L\times\bFreeS$ denote the product poset of the spine of (unreduced) Outer space and its simplicial boundary.
We define $\mcZ$ to be the subposet of $L\times\bFreeS$ given by all pairs $(G,S)$ such that $G\in L$  and $S=G/H$ is obtained by collapsing a proper core subgraph $H\subset G$. Let $p_1:\mcZ\to L$ and $p_2:\mcZ\to \bFreeS$ be the natural projection maps.

We want to use $\mcZ$ to study the connectivity properties of $\bFreeS$. The methods we use for this can also be applied to understand the topology of the free factor complex and the boundary of jewel space. So we will in fact prove connectivity results for all these complexes at the same time. For this we need to introduce two subposets of $\mcZ$:

For the first one, we set $\mcZ^1$ to be the subposet of $L\times\FreeSone$ given by all pairs $(G,S)$ such that $S=G/H$ is obtained by collapsing a proper \emph{connected} core subgraph $H\subset G$. Let $q_1:\mcZ^1\to L$ and $q_2:\mcZ^1\to \FreeSone$ be the natural projection maps. The poset $\mcZ^1$ is a subposet of $\mcZ$ and $q_1$ and $q_2$ are the restrictions of the projections $p_1$ and $p_2$. 

Secondly, we define $\mcZ^r$ to be the subposet of $K\times\bFreeSred$ given by all pairs $(G,S)$ such that $S=G/H$ is obtained by collapsing a proper core subgraph $H\subset G$. 
Note that if a graph $G$ does not contain a separating edge, neither does $G/H$ for any subgraph $H\subset G$. It follows that $\mcZ^r$ is the subposet of $\mcZ$ consisting of all $(G,S)$ such that $G\in K$. 
The natural projection maps $r_1:\mcZ^r\to K$ and $r_2:\mcZ^r\to \bFreeSred$ are obtained by restricting the maps $p_1$ and $p_2$.

We think of $\mcZ$, $\mcZ^1$ and $\mcZ^r$ as \emph{thickened versions} of $\bFreeS$, $\FreeSone$ and $\bFreeSred$, respectively.
In order to deduce connectivity results about these three complexes, we proceed in two steps: First we show that the projections $p_2$, $q_2$ and $r_2$ to the second factors define homotopy equivalences; then we apply the results of Section~\ref{sec:graphposet} to understand the fibres of the projections $p_1$, $q_1$ and $r_1$. 

\subsection{Projections to the second factor}
We first deformation retract the fibres of $p_2$, $q_2$ and $r_2$ to simpler subposets:

\begin{lemma}
\label{lem:p_2^-1FFS}
\leavevmode
\begin{enumerate}
\item For all $S\in \bFreeS$, the fibre $p_2^{-1}(\bFreeS_{\geq S})$ deformation retracts to $p_2^{-1}(S)$.
\item For all $S\in \FreeSone$, the fibre $q_2^{-1}(\FreeSone_{\geq S})$ deformation retracts to $q_2^{-1}(S)$.
\item For all $S\in \bFreeSred$, the fibre $r_2^{-1}(\bFreeSred_{\geq S})$ deformation retracts to $r_2^{-1}(S)$.
\end{enumerate}
\end{lemma}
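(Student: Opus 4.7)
My approach is to apply Quillen's fibre lemma (Lemma~\ref{lem:QuillenFibres}) to the inclusion $\iota$ of $p_2^{-1}(S)$ into $p_2^{-1}(\bFreeS_{\geq S})$, and analogously for the maps $q_2$ and $r_2$, by showing each relevant fibre has a maximum element and is therefore contractible. Combined with the fact that the inclusion of $p_2^{-1}(S)$ as a subposet makes its order complex a subcomplex (hence the inclusion is a cofibration), the resulting homotopy equivalence will upgrade to a deformation retract.

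For part~(1), fix $(G, T) \in p_2^{-1}(\bFreeS_{\geq S})$ and write $T = G/H$ with $H \subset G$ a core subgraph. Let $K \subset T$ be the subgraph collapsed by the map $T \to S$, and let $\tilde{K} \subset G$ denote its lift. Set $\hat{H} := H \cup \tilde{K}$, let $\mathring{\hat{H}}$ be its maximal core subgraph, and let $F := \hat{H} \setminus \mathring{\hat{H}}$ be the forest of trimmed leaves (which is a forest because any cycle inside it would prevent its edges from being trimmed). Since $G \in L$ has all valences at least $3$, the forest collapse yields $G_S := G/F \in L$, and $\mathring{\hat{H}}$ descends to a proper core subgraph $H'' \subset G_S$ with $G_S/H'' = S$, so $(G_S, S) \in p_2^{-1}(S)$.

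I claim that $(G_S, S)$ is the maximum of the fibre
\[
\iota^{-1}\bigl((p_2^{-1}(\bFreeS_{\geq S}))_{\leq (G, T)}\bigr) = \{(G', S) \in p_2^{-1}(S) : G' \leq G\}.
\]
Any such $(G', S)$ has $G' = G/E'$ for some forest $E' \subset G$ (since $G' \leq G$ in $L$) and $S = G'/H'$ for a core subgraph $H' \subset G'$, so composing gives a decomposition $\hat{H} = E' \sqcup \widetilde{H'}$, where $\widetilde{H'}$ denotes the lift of $H'$ in $G$. A careful analysis of the leaf-trimming of $\hat{H}$ shows that the initial valence-one vertices are precisely the leaves of $E'$-trees carrying no incident $\widetilde{H'}$-edge, and that the trimming cascade only ever removes $E'$-edges --- the reason is that the core property of $H'$ rules out any configuration in which the cascade could reach a $\widetilde{H'}$-edge (such a reach would force some vertex $\bar v$ of $H'$ in $G'$ to have $H'$-valence less than $2$). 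Hence $\widetilde{H'} \subseteq \mathring{\hat{H}}$, which yields $F \subseteq E'$ and therefore $G' \leq G_S$. So the fibre has a maximum and is contractible; Lemma~\ref{lem:QuillenFibres} then gives that $\iota$ induces a homotopy equivalence on realisations.

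For parts~(2) and~(3) the same construction works with minor additional checks. In part~(2), one verifies that $(G_S, S) \in \mcZ^1$: the hypothesis $T \in \FreeSone$ implies $\tilde{K}$ has at most one non-trivial component, and that component meets the connected $H$ at the lifts of the unique non-trivial vertex of $T$, so $\mathring{\hat{H}}$ is connected. In part~(3), since collapsing a forest in a graph with no separating edges cannot create any, one has $G_S$ with no separating edges and $(G_S, S) \in \mcZ^r$. The main technical obstacle, which I expect to require the most care, is the verification of the inclusion $\widetilde{H'} \subseteq \mathring{\hat{H}}$ on which the maximality of $(G_S, S)$ rests.
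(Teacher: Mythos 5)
Your argument is correct and rests on the same underlying construction as the paper's proof. Given $(G,T)\in p_2^{-1}(\bFreeS_{\geq S})$, concatenate $G\to T\to S$ to write $S=G/\hat{H}$, decompose $\hat{H}$ into its core $\mathring{\hat{H}}$ plus the complementary forest $F$, and form $(G/F,S)\in p_2^{-1}(S)$; this is exactly the paper's assignment $(G',S')\mapsto (G'/T',S)$. Where you diverge is in the poset topology run afterwards. The paper shows this assignment is a monotone poset map restricting to the identity on $p_2^{-1}(S)$, verifies order-preservation via a commuting diagram of collapse maps, and invokes Lemma~\ref{lem:HomotopicPosetMaps}, which yields the deformation retraction in one step. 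You instead run Quillen's fibre lemma (Lemma~\ref{lem:QuillenFibres}) on the inclusion $p_2^{-1}(S)\hookrightarrow p_2^{-1}(\bFreeS_{\geq S})$, show that each fibre has $(G/F,S)$ as a (nonempty) maximum, hence is contractible, and then upgrade the resulting homotopy equivalence to a deformation retraction using the cofibration property of subcomplex inclusions. Both proofs come down to the same combinatorial lemma --- that the lift $\widetilde{H'}$ of the core subgraph $H'\subset G'$ lies inside $\mathring{\hat{H}}$, equivalently $F\subseteq E'$ --- and you have rightly flagged it as the crux. Your leaf-pruning sketch identifies the correct reason (a pruned $\widetilde{H'}$-edge would produce an $H'$-valence-one vertex of $G'$), but the assertion that the cascade never reaches a $\widetilde{H'}$-edge needs an induction along the pruning order (if an $E'$-edge $e'=vw$ is pruned at leaf $w$, the $E'$-subtree on the $w$-side was already pruned and carried no $\widetilde{H'}$-edges, so the $E'$-component of the offending vertex meets $\widetilde{H'}$ in a single edge), which is not spelled out; the paper reaches the same point via uniqueness of collapse maps and the observation that $\hat{H}\to H'$ respects cores. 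Your route is valid, and the maximality viewpoint makes the geometry transparent, but the paper's monotone-retraction packaging is marginally more economical since it bypasses the cofibration step and the nonemptiness check.
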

\begin{proof}
We define a map $f:p_2^{-1}(\bFreeS_{\geq S})\to p_2^{-1}(S)$ as follows: If $(G',S')\in p_2^{-1}(\bFreeS_{\geq S})$, then there are collapse maps $G'\to S'$ and $S'\to S$. Concatenating these maps, we see that $S$ is obtained from $G'$ by collapsing a subgraph $H'\subset G'$. As $S\in \bFreeS =\FreeS\bsl L$, the graph $H'$ has non-trivial fundamental group and can be written as the union of a (possibly trivial) forest $T'$ and its (non-trivial) core $\mathring{H}'$. We define a map 
\begin{align*}
f\colon p_2^{-1}(\bFreeS_{\geq S}) &\to p_2^{-1}(S)\\
(G',S') &\mapsto (G'/T',S).
\end{align*} 
As $S=(G'/T')/\mathring{H}'$, the image $(G'/T',S)$ is indeed an element of $p_2^{-1}(S)$.

Now assume we have $(G'',S'')\geq (G',S')$ in $p_2^{-1}(\bFreeS_{\geq S})$. We obtain a diagram of collapse maps
\begin{equation*}
\xy
\xymatrix{
 G'' \ar[d]_{c} \ar[r]
&  S'' \ar[d] \ar[rd] \\
 G' \ar[r] & S' \ar[r] & S \, .}
\endxy 
\end{equation*}
In general, if there are free splittings $S_1, S_2 \in \FreeS$ such that there is a collapse map $S_1 \to S_2$, then this map is unique up to equivariant isomorphism. Hence, the diagram above commutes and the collapse map $c:G''\to G'$ restricts to a surjection $H''\to H'$. The graph $H'$ is obtained from $H''$ by collapsing a forest in $H''$ and identifying end points of edges of $G''$ that are collapsed by $G''\to G'$. It follows that $H''\to H'$ restricts to a map $\mathring{H''}\to\mathring{H'}$. Hence, we have $c(T'')\supseteq T' $ which implies $G''/T'' \geq G'/T'$. Consequently $f:p_2^{-1}(\bFreeS_{\geq S})\to p_2^{-1}(S)$ is a well-defined, monotone poset map restricting to the identity on $p_2^{-1}(S)$. Now one can use Lemma~\ref{lem:HomotopicPosetMaps} and the usual arguments to see that it defines a deformation retraction.

For (2), if $(G',S')\in q_2^{-1}(\FreeSone_{\geq S})$, then the splitting $S'$ is obtained from $G'$ by collapsing a connected core subgraph and $S$ is obtained from $S'$ by collapsing a subgraph. Concatenating these two collapse maps, one sees that $S=G'/H'$ for a subgraph $H'\subset G'$. This subgraph may be disconnected, but only one of its components has non-trivial fundamental group. It follows that its core $\mathring{H'}$ is connected.
Using this observation, the map $f:p_2^{-1}(\bFreeS_{\geq S})\to p_2^{-1}(S)$  restricts to a monotone poset map $q_2^{-1}(\FreeSone_{\geq S})\to q_2^{-1}(S)$. So the second claim follows from Lemma~\ref{lem:HomotopicPosetMaps} as well.

For (3), recall that if a graph $G$ does not contain a separating edge, then neither does $G/H$ for any subgraph $H\subset G$.
It follows that for all $S\in \bFreeSred$, the map $f$ also restricts to a deformation retraction $r_2^{-1}(\bFreeSred_{\geq S})\to r_2^{-1}(S)$.
\end{proof}

Hence, instead of studying arbitrary fibres, it suffices to consider the preimages of single vertices. We start by using the methods from Section~\ref{sec:posets of free splittings} to show:

\begin{proposition}
\label{prop:p_2^-1(S)FFS}
For all $S\in \bFreeS$, the preimage $p_2^{-1}(S)$ is contractible.
\end{proposition}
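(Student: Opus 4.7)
The plan is to identify $p_2^{-1}(S)$ with a direct product of spines of Culler--Vogtmann Outer spaces of the vertex stabilizers of $S$, and then conclude via iterated applications of Lemma~\ref{lem:QuillenFibres}.

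First, I would set up notation. Let $[A_1], \ldots, [A_k]$ denote the elements of $\V(S)$; fix representatives $A_i$ as stabilizers of chosen lifts $v_i \in S$. Each $A_i$ is a free group; let $L_i$ denote the spine of Culler--Vogtmann Outer space of $A_i$, which is contractible by \cite{CV:OuterSpace} (and is a single point if $\rk A_i = 1$).

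Next, I would identify $p_2^{-1}(S)$ with the product $\prod_{i=1}^k L_i$ as a poset. Given $(G, S) \in p_2^{-1}(S)$, the collapse map $G \to S$ (unique up to equivariant isomorphism) folds an $\free$-invariant core subgraph $H \subset G$ onto the non-trivial vertices of $S$, and the connected component $G_i$ of $H$ mapping to $v_i$ is a marked core graph with $\pi_1(G_i) = A_i$, defining an element of $L_i$. Conversely, any tuple $(G_1, \ldots, G_k) \in \prod_i L_i$ assembles equivariantly into a blow-up of $S$ that lies in $L$ and collapses onto $S$ via a core subgraph. To check the order relation, suppose $G \leq G'$ in $L$, so that $G' \to G$ collapses some forest $F \subset G'$. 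Since collapse maps in $\FreeS$ are unique up to equivariant isomorphism, the composition $G' \to G \to S$ agrees with $G' \to S$; hence any edge of $G'$ mapping to an edge of $S$ is not collapsed. Thus $F$ lies in the preimage of the non-trivial vertices of $S$, decomposing as $F = \bigsqcup F_i$ with $F_i \subset G'_i$, which gives $G_i \leq G'_i$ in $L_i$. The converse inclusion is immediate, and this establishes the poset isomorphism.

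Finally, I would conclude by induction on $k$ that $\prod_{i=1}^k L_i$ is contractible. The base case $k = 1$ is just the contractibility of $L_1$. For the inductive step, consider the projection $\pi \colon \prod_i L_i \to L_k$. For every $G_k \in L_k$, the fibre $\pi^{-1}((L_k)_{\leq G_k}) = \prod_{i<k} L_i \times (L_k)_{\leq G_k}$ admits the monotone poset self-map $\mathrm{id} \times c_{G_k}$, where $c_{G_k}$ sends each element of $(L_k)_{\leq G_k}$ to its maximum $G_k$. This map dominates the identity pointwise, so by Lemma~\ref{lem:HomotopicPosetMaps} it is homotopic to the identity; its image is isomorphic to $\prod_{i<k} L_i$, which is contractible by the inductive hypothesis, so each fibre of $\pi$ is contractible. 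A final application of Lemma~\ref{lem:QuillenFibres} to $\pi$ then yields $\prod_{i=1}^k L_i \simeq L_k$, which is contractible. The main subtlety is the coordinate-wise decomposition of collapses in the poset isomorphism, which rests on the uniqueness (up to equivariant isomorphism) of collapse maps between free splittings, a fact already exploited in the proof of Lemma~\ref{lem:p_2^-1FFS}.
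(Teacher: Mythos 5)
There is a genuine gap: the claimed poset isomorphism $p_2^{-1}(S) \cong \prod_{i=1}^k L_i$ is false, because a blow-up of $S$ is \emph{not} determined by the core subgraphs $H_1,\ldots,H_k$ alone. You also need to specify \emph{where} the non-core edges of $G$ (i.e. the edges mapping onto the edges of $S$) are attached to the $H_i$'s, and this attaching data is genuinely additional information. Concretely, take $\rank = 3$ and let $S$ be the HNN splitting with a single vertex of group $\langle a, b\rangle$ and one loop edge. One blow-up is the rose $R_3$ with petals $a,b,e$, with core subgraph $\{a,b\}$. Another blow-up is the graph with two vertices $u,w$ and four edges $a_1, a_2, b, e$ (petal $a$ subdivided at $w$, loop $b$ at $u$, and $e$ running from $u$ to $w$); here the core subgraph $\{a_1,a_2,b\}$ has a valence-$2$ vertex $w$, which is permitted by the paper's definition of core subgraph, and smoothing it recovers the rose $R_2$. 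Both blow-ups lie in $p_2^{-1}(S)$ (indeed the first is obtained from the second by a forest collapse), yet both map to the same element $R_2 \in L(\langle a,b\rangle)$. So your map is not injective, and dually your ``assembly'' of a tuple $(G_1,\dots,G_k)$ into a blow-up is not well-defined without an additional choice of attaching points.

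The fibre $p_2^{-1}(S)$ is thus a decorated version of $\prod_i L_i$ and does not split as a direct product of spines; seeing its contractibility requires controlling the attaching data simultaneously with the $H_i$'s. The paper avoids this pitfall by not attempting a product decomposition at all: it passes to the union $X\subset\CV$ of open simplices corresponding to elements of $p_2^{-1}(S)$, shows $X$ is contractible by a Skora-path deformation retraction (adapting Guirardel--Levitt's argument for contractibility of deformation spaces), and then observes that $p_2^{-1}(S)$ is a deformation retract of $X$. Your outer strategy (apply Lemma~\ref{lem:QuillenFibres} to a projection to one factor) is sound in spirit, but without a correct description of the fibre it does not apply.
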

\begin{proof}
Fix a free splitting $S\in \bFreeS$ and let $[A_1],\ldots, [A_k]$ be the components of $\V(S)$. Every element in $p_2^{-1}(S)$ is given by blowing up the vertices of $S$ with non-trivial vertex stabiliser. That is, every element in $p_2^{-1}(S)$ is a pair $(G,S)$ such that there is a unique core subgraph $H\subset G$ having connected components $H_1,\ldots, H_k$ where $\pi_1(H_i)=[A_i]$ and $S=G/H$. Fix an element $(G_0, S) \in p_2^{-1}(S)$ such that each $H^0_i$ is a rose.

For every element in $p_2^{-1}(S)$, consider the open simplex in $\CV$ corresponding to it. Let $X \subset \CV$ be the union of all such simplices. We will show that $X$ is contractible using Skora's deformation paths. Since $p_2^{-1}(S)$ is a deformation retract of $X$, this implies that $p_2^{-1}(S)$ is also contractible. 

We define a map $\rho \colon X \times [0, \infty] \to \CV$ as follows (see Section~\ref{sec:freesplittingposets}  and \cite[Section 6]{GL:DeformationSpace} for more details):
Fix $T_0 \in X$ such that $T_0$ is the universal cover of $G_0$ and let $\mcC_0$ be the open simplex of $\CV$ corresponding to $T_0$. Consider a tree $T \in X$. Both $T_0$ and $T$ are obtained from $S$ by blowing up the vertices with stabilisers in $\V(S)$. Hence, there exists a unique $T_0(T) \in \mcC_0$ and a morphism $f_T \colon T_0(T) \to T$, such that $f_T$ maps the minimal subtree stabilised by $A_i \in [A_i]$ in $T_0(T)$ to the minimal subtree of $A_i \in [A_i]$ in $T$. 
As in Section~\ref{sec:freesplittingposets}, the morphism $f_T$ gives rise to a sequence of intermediate trees $T_t(T)$. We set $\rho(T,t) \coloneqq T_t(T)$.

Since $f_T$ restricts to a morphism between the corresponding minimal subtrees of $A_i$ and it factors through the morphisms $T_0(T)\to T_t(T)$ and $T_t(T) \to T$, the intermediate tree $T_t(T)$ collapses to $S$ and hence is in $X$. Therefore, we have a map $\rho \colon X \times [0, \infty] \to X$. The same arguments as in proof of \cite[Theorem~6.1(1)]{GL:DeformationSpace} immediately give that $\rho$ is continuous with respect to the weak topology on $X$. This implies that $X$ is contractible. 
\end{proof}

The following shows that Proposition~\ref{prop:p_2^-1(S)FFS} also provides us with sufficient information about the fibres of $q_2$ and $r_2$.
\begin{proposition}
\label{prop:p_2^-1(S) to q_2^-1(S) and r_2^-1(S)}
\leavevmode
\begin{enumerate}
\item For all $S\in \FreeSone$, one has $q_2^{-1}(S)= p_2^{-1}(S)$.
\item For all $S\in \bFreeSred$, there is a deformation retraction $p_2^{-1}(S)\to r_2^{-1}(S)$.
\end{enumerate}
\end{proposition}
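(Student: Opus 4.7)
The first claim is essentially bookkeeping. For any $(G,S)\in p_2^{-1}(S)$ with $S = G/H$, the connected components of the core subgraph $H$ correspond bijectively to the non-trivial vertex groups of $S$: each component of $H$ collapses to a single vertex whose stabiliser is its fundamental group. Since $S\in \FreeSone$ has exactly one non-trivial vertex group, $H$ is connected, so $(G,S)\in q_2^{-1}(S)$. The reverse inclusion is immediate from $\mcZ^1\subseteq \mcZ$.

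For the second claim, the plan is to define the retraction by collapsing all separating edges of $G$. The starting observation is that every separating edge $e$ of $G$ lies in $H$: otherwise, writing $G\setminus e=G_1\sqcup G_2$ and $H=H_1\sqcup H_2$ with $H_i\subseteq G_i$, the image $\bar e$ would be a separating edge of $S=G/H$, contradicting $S\in \bFreeSred$. Let $T=T_G$ denote the forest of separating edges of $G$ and define
\begin{equation*}
f\colon p_2^{-1}(S)\to p_2^{-1}(S),\qquad f(G,S)=(G/T,S).
\end{equation*}
Since $f(G,S)\leq (G,S)$ and $f$ restricts to the identity on $r_2^{-1}(S)$, once $f$ is verified to be a monotone poset map with image in $r_2^{-1}(S)$, Lemma~\ref{lem:HomotopicPosetMaps} will yield the desired deformation retraction.

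Checking that $f$ lands in $r_2^{-1}(S)$ requires showing $G/T\in K$ (no separating edges by construction, and valence $\geq 3$ is preserved because a $T$-component with $n$ vertices has total valence at least $3n$ in $G$ while losing only $2(n-1)$ under the collapse) and that $H/T$ remains a proper core subgraph of $G/T$ with $(G/T)/(H/T)=S$. The non-obvious point here is the absence of valence-$1$ vertices in $H/T$: such a vertex would come from a $T$-connected vertex component $C\subseteq H$ with $\sum_{v\in C}\mathrm{val}_H(v)=2(|C|-1)+1$, contradicting $\mathrm{val}_H(v)\geq 2$. The main obstacle is the monotonicity of $f$: given a collapse $G_2\to G_1$ via a forest $F$, one must establish $T_{G_2}\subseteq F\cup T_{G_1}'$---a separating edge of $G_2$ outside $F$ projects to a separating edge of $G_1$, because collapsing the forest $F$ cannot restore connectivity---and then verify that the image of $F\setminus T_{G_2}$ in $G_2/T_{G_2}$ is a forest whose collapse realises a map $G_2/T_{G_2}\to G_1/T_{G_1}$. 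This last step is where most of the work sits, and it requires a careful bookkeeping of how the three forests $F$, $T_{G_1}'$, and $T_{G_2}$ interact inside $G_2$; the key ingredient is that separating edges of $G_2$ lie in no cycle, so cycles in $G_2/T_{G_2}$ lift to cycles in $G_2\setminus T_{G_2}$.
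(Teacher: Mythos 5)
Your proof is correct and takes essentially the same approach as the paper: both parts hinge on the observation that the core subgraph $H$ collapsed to obtain $S$ must (1) be connected when $S \in \FreeSone$, and (2) contain all separating edges of $G$ when $S \in \bFreeSred$, and the retraction in part (2) is defined in both by $G \mapsto G/T_G$ followed by an appeal to Lemma~\ref{lem:HomotopicPosetMaps}. The one place your version is heavier than necessary is the monotonicity verification at the end: the paper sidesteps the forest bookkeeping entirely by noting that $G/T_G$ is the unique maximal element of $K$ lying below $G$, from which monotonicity of $G \mapsto G/T_G$ is automatic; your explicit argument via ``separating edges lie in no cycle, hence $F \cup T_{G_2}$ is a forest'' is sound, but you could have simply cited that $L \to K$, $G \mapsto G/T_G$ is the standard poset retraction and only checked that it preserves the fibre.
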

\begin{proof}
The first claim follows immediately from the definitions: The map $q_2$ is the restriction of $p_2:\mcZ\to\bFreeS$.

The proof of the second claim essentially just uses that $K$ is a deformation retract of $L$:
Let $S\in \bFreeSred$.
By definition, $p_2^{-1}(S)$ can be seen as the poset of all $G\in L$ such that there is a proper core subgraph $H\subset G$ with $G/H=S$. On the other hand, $r_2^{-1}(S)$ consists of all $G\in K$ satisfying the same property, so $r_2^{-1}(S)=p_2^{-1}(S)\cap K$ is a subposet of $p_2^{-1}(S)$.

For each $G\in L$, there is a unique maximal $G'\in K$ such that $G'\leq G$; it is obtained by collapsing all the separating edges of $G$.
Now if $G\in p_2^{-1}(S)$ and $H\subset G$ is a core subgraph such that $S=G/H$, the collapse $G\to G'$ maps $H$ to a core subgraph $H'\subset G'$. Because $S$ does not contain any separating edges, the collapse $G\to S$ factors as
\begin{equation*}
\xy
\xymatrix{
G \ar[rrd]^{\text{collapse }H} \ar[d]
& \\
G' \ar[rr]_{\text{collapse }H'} && S .}
\endxy
\end{equation*}
It follows that $G'\in r_2^{-1}(S)$, so the assignment $G\mapsto G'$ defines a poset map $p_2^{-1}(S)\to r_2^{-1}(S)$ that is monotone and restricts to the identity on $r_2^{-1}(S)$. The statement now follows from Lemma~\ref{lem:HomotopicPosetMaps}.
\end{proof}

In particular, these fibres are contractible.

\begin{corollary}
\label{cor:ZhomeqbFree}
The maps $p_2:\mcZ\to \bFreeS$, $q_2:\mcZ^1\to \FreeSone$ and $r_2:\mcZ^r \to \bFreeSred$ are homotopy equivalences.
\end{corollary}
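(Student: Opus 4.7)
The strategy is to apply the order-reversed version of the fibre lemma (Lemma~\ref{lem:QuillenFibres}(1)): it suffices to show that for every $S$ in the respective target poset, the fibre over $\bFreeS_{\geq S}$, $\FreeSone_{\geq S}$ or $\bFreeSred_{\geq S}$ is contractible. Since all the hard work has been done in Lemma~\ref{lem:p_2^-1FFS}, Proposition~\ref{prop:p_2^-1(S)FFS} and Proposition~\ref{prop:p_2^-1(S) to q_2^-1(S) and r_2^-1(S)}, the proof will just be a one-line concatenation of these three results for each of the three maps.

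First, for $p_2\colon \mcZ \to \bFreeS$: given $S \in \bFreeS$, Lemma~\ref{lem:p_2^-1FFS}(1) tells us that $p_2^{-1}(\bFreeS_{\geq S})$ deformation retracts to $p_2^{-1}(S)$, which is contractible by Proposition~\ref{prop:p_2^-1(S)FFS}. Hence every such fibre is contractible, and Lemma~\ref{lem:QuillenFibres}(1) (applied to the opposite posets, which have canonically identified order complexes) yields that $p_2$ is a homotopy equivalence.

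For $q_2\colon \mcZ^1 \to \FreeSone$: given $S \in \FreeSone$, the fibre $q_2^{-1}(\FreeSone_{\geq S})$ deformation retracts to $q_2^{-1}(S)$ by Lemma~\ref{lem:p_2^-1FFS}(2), and $q_2^{-1}(S) = p_2^{-1}(S)$ by Proposition~\ref{prop:p_2^-1(S) to q_2^-1(S) and r_2^-1(S)}(1); the latter is contractible by Proposition~\ref{prop:p_2^-1(S)FFS}. Similarly, for $r_2\colon \mcZ^r \to \bFreeSred$ and $S \in \bFreeSred$, the fibre $r_2^{-1}(\bFreeSred_{\geq S})$ deformation retracts to $r_2^{-1}(S)$ by Lemma~\ref{lem:p_2^-1FFS}(3), which is itself a deformation retract of the contractible poset $p_2^{-1}(S)$ by Proposition~\ref{prop:p_2^-1(S) to q_2^-1(S) and r_2^-1(S)}(2). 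In both cases the fibre lemma then gives that the map is a homotopy equivalence.

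There is no real obstacle to overcome here; the statement is a formal consequence of the preceding results. The only care needed is to note that Lemma~\ref{lem:QuillenFibres} is stated for fibres of the form $f^{-1}(Q_{\leq x})$, while our fibres are of the form $f^{-1}(Q_{\geq x})$. As remarked after Lemma~\ref{lem:QuillenFibres}, this is resolved by passing to the opposite posets, whose order complexes are canonically identified with those of the original posets.
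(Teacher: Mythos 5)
Your proof is correct and follows exactly the same approach as the paper, which likewise applies Quillen's fibre lemma to the three maps using Lemma~\ref{lem:p_2^-1FFS}, Proposition~\ref{prop:p_2^-1(S)FFS} and Proposition~\ref{prop:p_2^-1(S) to q_2^-1(S) and r_2^-1(S)}. You have merely spelled out the one-line concatenation in more detail, including the correct observation that the order-reversed version of the fibre lemma (via opposite posets) is what is needed here.
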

\begin{proof}
Using Quillen's fibre lemma, the claim is an immediate consequence of Lemma~\ref{lem:p_2^-1FFS}, Proposition~\ref{prop:p_2^-1(S)FFS} and Proposition~\ref{prop:p_2^-1(S) to q_2^-1(S) and r_2^-1(S)}.
\end{proof}

\subsection{Projections to the first factor} 
Corollary~\ref{cor:ZhomeqbFree} allows us to replace $\bFreeS$ by its thickened version $\mcZ$. 
This has the advantage that $\mcZ$ possesses a natural projection map $p_1$ to the contractible poset $L$ which we will study in this subsection.

\begin{lemma}
\label{lem:Fibres&CoreSubgraphs}
For all $G\in L$, the fibre $p_1^{-1}(L_{\leq G})$ is homotopy equivalent to $\Core(G)$, the poset of proper core subgraphs of $G$.
\end{lemma}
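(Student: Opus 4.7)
The plan is to show that $p_1^{-1}(L_{\leq G})$ deformation retracts onto the subposet $F_G \coloneqq p_1^{-1}(\{G\})$ consisting of pairs of the form $(G,S)$, and then to identify $F_G$ with $\Core(G)^{\mathrm{op}}$, whose realisation coincides with $\real{\Core(G)}$.

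For the retraction, consider the map $r\colon p_1^{-1}(L_{\leq G}) \to F_G$ sending $(G',S) \mapsto (G,S)$. I would first verify that this indeed lands in $\mcZ$: since $G' \in L$ is a collapse of $G$ by a forest $F \subset G$, and $S = G'/H'$ for some proper core subgraph $H' \subset G'$, the preimage $\tilde H'$ of $H'$ in $G$ has the same (non-trivial) fundamental group as $H'$, and its core $\mathring{\tilde H'}$ is a proper core subgraph of $G$ realising $S = G/\mathring{\tilde H'}$. The map $r$ is manifestly a poset map satisfying $r(G',S) \geq (G',S)$, so by Lemma~\ref{lem:HomotopicPosetMaps} the inclusion $F_G \hookrightarrow p_1^{-1}(L_{\leq G})$ is a homotopy equivalence.

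The second step is to identify $F_G$ with $\Core(G)^{\mathrm{op}}$ via the assignment $C \mapsto (G, G/C)$, whose inverse sends $(G,S)$ to the core of the (uniquely determined) edge-induced subgraph of $G$ collapsed by the map $G \to S$. Since collapsing a larger subgraph of $G$ produces a splitting lower in $\FreeS$, this bijection is order-reversing and hence gives the claimed isomorphism onto the opposite poset. Because a poset and its opposite have the same order complex, this yields $\real{p_1^{-1}(L_{\leq G})} \simeq \real{\Core(G)}$.

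The only nontrivial point to be careful about is the uniqueness of the core subgraph in the inverse bijection; this comes from the fact that a collapse map in $\FreeS$ specifies its set of collapsed edges uniquely, after which taking the core (under the convention that each component has non-trivial fundamental group) is canonical. Once that bookkeeping is in place, both steps of the argument are essentially formal consequences of Lemma~\ref{lem:HomotopicPosetMaps}.
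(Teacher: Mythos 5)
There is a genuine gap in the retraction step. You define $r\colon p_1^{-1}(L_{\leq G}) \to p_1^{-1}(G)$ by $(G',S) \mapsto (G,S)$, but this map generally does not land in $\mcZ$: the splitting $S$ need not be of the form $G/H$ for any proper core subgraph $H \subset G$. Your verification asserts that $S = G/\mathring{\tilde H'}$, which is false. Consider the dumbbell graph $G$ of rank $2$ with loops $a$, $b$ joined by a separating edge $e$, let $G' = G/e$ be the rose, and let $S = G'/\{a\}$. Here $\tilde H' = \{a\}$, so $\mathring{\tilde H'} = \{a\}$, but $G/\{a\}$ still has the edge $e$ and two orbits of vertices, whereas $S$ has only one edge and one vertex. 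Thus $G/\mathring{\tilde H'} \neq S$, and indeed $S$ is not obtainable as $G/H$ for any core subgraph $H$ of $G$: the edge $e$ has to be collapsed as well.

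The fix, which is what the paper does, is to change the second coordinate as well: send $(G',S')$ to $(G, G/H)$, where $H$ is the unique proper core subgraph of $G$ with $\pi_1(H) = \pi_1(H')$. This is a different splitting from $S'$, but $G/H$ admits a collapse to $S' = G'/H'$ (collapsing the remaining forest edges), so $(G, G/H) \geq (G', S')$ in $\mcZ$, and the map is still monotone, lands in $p_1^{-1}(G)$, and restricts to the identity there, after which Lemma~\ref{lem:HomotopicPosetMaps} applies. Your second step (the order-reversing bijection $C \mapsto (G, G/C)$ between $\Core(G)$ and $p_1^{-1}(G)$, and passing to the opposite poset) is correct and matches the paper.
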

\begin{proof}
Each element of the fibre $p_1^{-1}(L_{\leq G})$ consists of a pair $(G',S')$ where $G'\leq G$ in $L$ and $S'\in\bFreeS$ is obtained from $G'$ by collapsing a proper core subgraph $H'$.
As $G'$ is obtained from $G$ by collapsing a forest, there is a unique, proper core subgraph $H$ of $G$ making the following diagram commute:
\begin{equation*}
\xy
\xymatrix{
H \ar[d] \ar@{^{(}->}[r]
& G \ar[d] \\
H' \ar@{^{(}->}[r] & G' }
\endxy
\end{equation*}
$H$ is the unique core subgraph of $G$ such that $\pi_1(H)=\pi_1(H')$.

Because this diagram commutes, the collapse $G\to G'$ induces a collapse $G/H \to G'/H'=S'$. Hence, we get a monotone poset map
\begin{align*}
f:p_1^{-1}(L_{\leq G})&\to p_1^{-1}(G)\\
(G',S')&\mapsto (G,G/H)
\end{align*}
restricting to the identity on $p_1^{-1}(G)\subseteq p_1^{-1}(L_{\leq G})$.
Again Lemma~\ref{lem:HomotopicPosetMaps} implies that $f$ defines a deformation retraction.

If $H$ and $H'$ are proper core subgraphs of $G$, one has $G/H\geq G/H'$ in $\bFreeS$ if and only if $H\leq H'$ in $\Core(G)$. It follows that $p_1^{-1}(G)$ can be identified with $\Core(G)^{op}$. Noting that $\real{\Core(G)^{op}}\cong \real{\Core(G)}$ finishes the proof.
\end{proof}

\begin{lemma}
\label{lem:Fibres&cCoreSubgraphs}
For all $G\in L$, the fibre $q_1^{-1}(L_{\leq G})$ is homotopy equivalent to $\cCore(G)$, the poset of proper connected core subgraphs of $G$.
\end{lemma}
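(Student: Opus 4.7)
The proof plan essentially mirrors that of Lemma~\ref{lem:Fibres&CoreSubgraphs}, with the sole extra point being that we must track the connectedness of core subgraphs through the collapses relating $G$ to its quotients in $L_{\leq G}$.

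First, I would analyse an element $(G',S')\in q_1^{-1}(L_{\leq G})$. By definition, $G'\in L$ is obtained from $G$ by collapsing an $\free$-invariant forest $F\subset G$, and $S'=G'/H'$ for some proper \emph{connected} core subgraph $H'\subset G'$. The plan is to associate to $H'$ a canonical proper connected core subgraph $H\subset G$ such that the diagram in the proof of Lemma~\ref{lem:Fibres&CoreSubgraphs} commutes, namely $H$ is the core of the preimage $c^{-1}(H')$ under the collapse $c\colon G\to G'$. Equivalently, $H$ is the unique core subgraph of $G$ with $\pi_1(H)=\pi_1(H')$. The key observation here is that $H'$ is obtained from $H$ by collapsing the forest $F\cap H$, and collapsing a forest neither creates nor destroys connected components of a core subgraph; hence $H$ is connected if and only if $H'$ is.

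Next, I would use this to define the retraction
\begin{equation*}
f\colon q_1^{-1}(L_{\leq G})\to q_1^{-1}(G),\qquad (G',S')\mapsto (G,G/H).
\end{equation*}
Since $G/H$ collapses to $G'/H'=S'$, we have $(G,G/H)\geq (G',S')$ in $\mcZ^1$, so $f$ is monotone. The assignment $H'\mapsto H$ is clearly compatible with inclusions: if $(G'',S'')\geq (G',S')$ with $S''=G''/H''$, then the forest collapse $G''\to G'$ sends $H''$ into $H'$, so the corresponding core subgraphs of $G$ satisfy $H''\subseteq H$. Hence $f$ is a well-defined poset map restricting to the identity on $q_1^{-1}(G)$, and Lemma~\ref{lem:HomotopicPosetMaps} shows that it defines a deformation retraction onto $q_1^{-1}(G)$.

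Finally, I would identify $q_1^{-1}(G)$ with $\cCore(G)^{op}$: a point of $q_1^{-1}(G)$ is a pair $(G,G/H)$ where $H\subset G$ is a proper connected core subgraph, and $G/H\geq G/H'$ in $\FreeSone$ exactly when $H\subseteq H'$. Taking order duals and using $\real{\cCore(G)^{op}}\cong \real{\cCore(G)}$ completes the proof. The one substantive point that distinguishes this from Lemma~\ref{lem:Fibres&CoreSubgraphs} — and the place I would take a little extra care — is the bidirectional preservation of connectedness under forest collapses and their inverses, which ensures both that $f$ lands in $q_1^{-1}(G)$ (i.e.\ that $G/H$ lies in $\FreeSone$) and that every element of $\cCore(G)$ actually arises in the image.
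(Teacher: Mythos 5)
Your proposal is correct and follows essentially the same route as the paper: the paper's own proof simply defers to the argument for Lemma~\ref{lem:Fibres&CoreSubgraphs} and records the one new observation that the core subgraph $H\subset G$ corresponding to a \emph{connected} core subgraph $H'\subset G'$ is itself connected (using that core subgraphs are allowed separating edges). Your write-up spells out the same mechanism in more detail, and the final identification $q_1^{-1}(G)\cong \cCore(G)^{op}$ matches the paper.
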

\begin{proof}
The proof is literally the same as the one of Lemma~\ref{lem:Fibres&CoreSubgraphs} after one makes the following observation: Whenever $G'\leq G$ in $L$ and $H'$ is a proper \emph{connected} core subgraph of $G'$, there is a unique, proper  \emph{connected} core subgraph $H\subset G$ making the diagram
\begin{equation*}
\xy
\xymatrix{
H \ar[d] \ar@{^{(}->}[r]
& G \ar[d] \\
H' \ar@{^{(}->}[r] & G' }
\endxy
\end{equation*}
commute. (Here again we use that our core subgraphs are allowed to have separating edges.)
\end{proof}

\begin{theorem}
\label{thm:connectedness bFreeSred}
\label{thm:connectedness FreeSone}
For $\rank\geq 3$, the posets $\bFreeS$, $\FreeSone$ and $\bFreeSred$ are $(\rank-3)$-connected.
\end{theorem}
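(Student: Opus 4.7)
The plan is to prove all three connectivity statements simultaneously via the thickened versions of the posets. By Corollary~\ref{cor:ZhomeqbFree}, the projections $p_2, q_2, r_2$ are homotopy equivalences, so it suffices to show that $\mcZ$, $\mcZ^1$ and $\mcZ^r$ are $(\rank-3)$-connected. For each of these, I would apply Lemma~\ref{lem:QuillenFibres} (Quillen's fibre lemma) to the projections $p_1:\mcZ\to L$, $q_1:\mcZ^1\to L$, and $r_1:\mcZ^r\to K$ to the first factor. The targets $L$ and $K$ are both contractible (they are the spines of (un)reduced Outer space), so in particular $(\rank-3)$-connected, and it remains only to verify that the fibres $p_1^{-1}(L_{\leq G})$, $q_1^{-1}(L_{\leq G})$, and $r_1^{-1}(K_{\leq G})$ are $(\rank-3)$-connected for every $G$ in the respective base.

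For $\mcZ$, Lemma~\ref{lem:Fibres&CoreSubgraphs} identifies the fibre up to homotopy with $\Core(G)$, which by Lemma~\ref{lem:SubgraphstoCores} is homotopy equivalent to $\X(G)$. Proposition~\ref{prop:X(G) homotopy} then says $\X(G)$ is either contractible (when $G$ has a separating edge) or a wedge of $(\rank-2)$-spheres, and in both cases it is $(\rank-3)$-connected. For $\mcZ^1$, Lemma~\ref{lem:Fibres&cCoreSubgraphs} reduces to $\cCore(G)$, which by Lemma~\ref{lem:cSubgraphstocCores} is homotopy equivalent to $\cX(G)$; Proposition~\ref{prop:cX(G) homotopy} gives a wedge of $(\rank-2)$-spheres, hence $(\rank-3)$-connectivity.

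The case of $\mcZ^r$ requires one small additional observation: the proof of Lemma~\ref{lem:Fibres&CoreSubgraphs} carries over verbatim when $L$ is replaced by $K$ (collapsing a forest in a graph without separating edges yields a graph without separating edges, and the induced core subgraph still lies in the same ambient graph), so $r_1^{-1}(K_{\leq G})$ deformation retracts to $\Core(G)^{op}\simeq \X(G)$ as well. Crucially, every $G\in K$ is separating-edge-free, so Proposition~\ref{prop:X(G) homotopy} provides a wedge of $(\rank-2)$-spheres, yielding $(\rank-3)$-connectivity of the fibre.

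I do not foresee any real obstacle here since all the hard work has already been done in Section~\ref{sec:graphposet} (homotopy types of $\X(G)$ and $\cX(G)$), in Section~\ref{sec:freesplittingposets} (contractibility results feeding into $p_2, q_2, r_2$ being equivalences) and in the preceding two subsections (identification of the fibres of $p_1, q_1, r_1$). The only bookkeeping point is to verify that the proof of Lemma~\ref{lem:Fibres&CoreSubgraphs} restricts properly to the reduced setting; this is essentially automatic from the definition of $\mcZ^r$, and once noted, a single application of Lemma~\ref{lem:QuillenFibres}(2) in each of the three cases delivers the desired conclusion.
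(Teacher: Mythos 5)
Your proposal is correct and follows essentially the same route as the paper: reduce to the thickened posets via Corollary~\ref{cor:ZhomeqbFree}, apply Quillen's fibre lemma to the first projections onto the contractible spines, and identify the fibres with $\Core(G)$ resp.\ $\cCore(G)$ using the lemmas and propositions of Section~\ref{sec:graphposet}. The only slight divergence is your handling of $\mcZ^r$: you re-run the argument of Lemma~\ref{lem:Fibres\&CoreSubgraphs} with $K$ in place of $L$, whereas the paper simply observes that $K$ is downwards-closed in $L$, so for $G\in K$ one has $r_1^{-1}(K_{\leq G})=p_1^{-1}(L_{\leq G})$ on the nose; both are valid and lead to the same conclusion.
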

\begin{proof}
We already know that $\bFreeS$ is homotopy equivalent to $\mcZ$.
To show that $\mcZ$ is $(\rank-3)$-connected, consider the first projection $p_1:\mcZ\to L$. By Lemma~\ref{lem:Fibres&CoreSubgraphs}, the fibre $p_1^{-1}(L_{\leq G})$ is homotopy equivalent to $\Core(G)$ for all $G\in L$. Lemma~\ref{lem:SubgraphstoCores} and Proposition~\ref{prop:X(G) homotopy} imply that this poset is at least $(\rank-3)$-connected. Applying Lemma~\ref{lem:QuillenFibres} finishes the proof.

For $\FreeSone$, the proof is just the same: By Proposition~\ref{cor:ZhomeqbFree}, the poset $\FreeSone$ is homotopy equivalent to $\mcZ^1$. For each $G\in L$, the fibre $q_1^{-1}(L_{\leq G})$ of the projection map $q_1:\mcZ^1\to L$ is homotopy equivalent to $\cCore(G)$ by Lemma~\ref{lem:Fibres&cCoreSubgraphs}. This poset is $(\rank-2)$-spherical by Lemma~\ref{lem:cSubgraphstocCores} and Proposition~\ref{prop:cX(G) homotopy}.

Lastly, the poset $\bFreeSred \simeq \mcZ^r$ is $(\rank-3)$-connected because the fibres of $r_1:\mcZ^r\to K$ are equal to those of $p_1$. Indeed, the spine $K$ of reduced Outer space is a downwards-closed subposet of $L$ and for any $G\in K$, one has $r_1^{-1}(G)=p_1^{-1}(G)$. It follows that for all $G\in K$, one has an equality $r_1^{-1}(K_{\leq G})$ and $p_1^{-1}(L_{\leq G})$. The higher connectivity of $\bFreeSred$ now follows as above.
\end{proof}

For the free factor complex and the boundary of jewel space, the preceding theorem already yields the best connectivity results that we are able to obtain. We summarise them in the following two corollaries.

\begin{corollary}
For all $\rank \geq 2$, the free factor complex $\FreeF$ is homotopy equivalent to a wedge of $(\rank-2)$-spheres.
\end{corollary}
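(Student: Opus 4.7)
My plan is to assemble the proof from ingredients already in place: the homotopy equivalence $\FreeF \simeq \FreeSone$ given by Proposition~\ref{prop:HomotopyEquivalence}(2), the higher connectivity of $\FreeSone$ from Theorem~\ref{thm:connectedness FreeSone}, and the dimension bound on $\FreeF$ recalled in the introduction.

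First, since a maximal chain in the poset $\FreeF$ has length $\rank-2$, the order complex $\FreeF$ is $(\rank-2)$-dimensional. For $\rank \geq 3$, Proposition~\ref{prop:HomotopyEquivalence}(2) combined with Theorem~\ref{thm:connectedness FreeSone} yields that $\FreeF$ is $(\rank-3)$-connected. Since $\FreeF$ is a countable CW complex of dimension $\rank-2$, its top-dimensional reduced homology $\tilde{H}_{\rank-2}(\FreeF;\mathbb{Z})$ sits inside the free abelian group on the $(\rank-2)$-simplices, so it is free abelian of some at most countable rank $\lambda$; all lower reduced homology vanishes by the connectivity.

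For $\rank \geq 4$, the complex is simply connected, so Corollary~\ref{cor:Whitehead} gives directly $\FreeF \simeq \bigvee_{\lambda} S^{\rank-2}$. The two small-rank cases are handled by hand: for $\rank=3$ the complex is a connected graph (being $0$-connected and of dimension $1$), hence homotopy equivalent to a wedge of circles; for $\rank=2$, $\FreeF$ is the discrete (countable) set of conjugacy classes of proper non-trivial free factors of $\free_2$, which is a wedge of $0$-spheres.

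The real work has already been done upstream, in Theorem~\ref{thm:connectedness FreeSone} via the thickening $\mcZ^1$ and the subgraph-poset arguments of Section~\ref{sec:graphposet}. The only potential pitfall at this stage is verifying that the hypotheses of Corollary~\ref{cor:Whitehead} (dimension $\geq 2$ and simple connectedness) are met, which forces the small-rank cases $\rank = 2, 3$ to be treated separately as above.
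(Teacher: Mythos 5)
Your outline matches the paper's argument very closely: homotopy equivalence $\FreeF \simeq \FreeSone$ from Proposition~\ref{prop:HomotopyEquivalence}(2), $(\rank-3)$-connectivity from Theorem~\ref{thm:connectedness FreeSone}, the dimension bound $\rank-2$, and then Corollary~\ref{cor:Whitehead} (with the small-rank cases $\rank=2,3$ handled directly). Your explicit treatment of $\rank=3$ via ``connected graph $\Rightarrow$ wedge of circles'' is actually a touch more careful than the paper's phrasing, since Corollary~\ref{cor:Whitehead} nominally needs simple connectivity.

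However, there is a genuine gap. Your argument establishes $\FreeF \simeq \bigvee_\lambda S^{\rank-2}$ for some $\lambda \geq 0$, but you never rule out $\lambda = 0$, i.e.\ you never show $\FreeF$ is non-contractible. The paper makes this a separate step and proves it by exhibiting an explicit non-trivial $(\rank-2)$-cycle: choose a basis $\{x_1,\dots,x_\rank\}$ of $\free$ and take $\Sigma$ to be the subposet of conjugacy classes of free factors generated by proper non-empty subsets of that basis; $\real{\Sigma}$ is a triangulated $(\rank-2)$-sphere inside $\real{\FreeF}$ (the analogue of an apartment in a Tits building), which shows $H_{\rank-2}(\FreeF)\ne 0$. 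Without this, your argument does not exclude the degenerate case, and since Theorem~\ref{thm:FreeFactorComplex} asserts a countably \emph{infinite} wedge, the non-contractibility is an essential part of the conclusion and must be supplied. The same omission affects your $\rank=3$ case, where ``connected graph'' alone does not exclude a tree.
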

\begin{proof}
If $\rank=2$, the complex $\Delta(\FreeF)$ is a countable infinite set carrying the discrete topology, i.e. a wedge of $0$-spheres. For $\rank \geq 3$,  $\Delta(\FreeF)$ is a simplicial complex of dimension $\rank-2$, so using the Whitehead theorem, it suffices to show that it is $(\rank-3)$-connected and non-contractible. We have $\FreeSone\simeq \FreeF$ by Proposition~\ref{prop:HomotopyEquivalence}, so the complex is $(\rank-3)$-connected by the preceding theorem.

To prove that it is non-contractible, look at the following subposet $\Sigma\subset \FreeF$:
Choose a basis $\set[x_1, x_2, \ldots , x_\rank]$ of $\free$ and let $\Sigma$ be the poset of all conjugacy classes of free factors generated by proper subsets of this basis. It is easy to see that $\real{\Sigma}$ is a triangulated $(\rank-2)$-sphere inside $\real{\FreeF}$. (This subcomplex is the analogue of an apartment in a Tits-building; see also \cite[Section 5]{HV:ComplexOfFreeFactors}.) In particular, this shows that $H_{\rank-2}(\FreeF)$ is non-trivial, so the complex cannot be contractible.
\end{proof}

In the reduced setting, Theorem~\ref{thm:connectedness bFreeSred} and the first statement of Theorem~\ref{thm:BoundaryJewelSpace} immediately imply:
\begin{corollary}
For all $\rank\geq 3$, the boundary $\partial \jewel$ is $(\rank-3)$-connected.
\end{corollary}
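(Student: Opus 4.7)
The plan is essentially to combine two results already stated: the homotopy equivalence $\partial \jewel \simeq \bFreeSred$ from the first (already asserted) statement of Theorem~\ref{thm:BoundaryJewelSpace}, and the $(\rank-3)$-connectivity of $\bFreeSred$ established in Theorem~\ref{thm:connectedness bFreeSred}. Since homotopy equivalence preserves connectivity, the corollary follows at once.

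More concretely, first I would invoke Theorem~\ref{thm:BoundaryJewelSpace}(1), giving $\partial \jewel \simeq \real{\bFreeSred}$. Then I would apply Theorem~\ref{thm:connectedness bFreeSred}, which asserts exactly that $\bFreeSred$ is $(\rank-3)$-connected for $\rank\geq 3$. The conclusion is immediate.

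There is no genuine obstacle at this stage, since all the real work has been done earlier. For the reader's orientation, the nontrivial content sits in Theorem~\ref{thm:connectedness bFreeSred}: one passes from $\bFreeSred$ to the thickened poset $\mcZ^r$ via the homotopy equivalence $r_2 \colon \mcZ^r \to \bFreeSred$ (Corollary~\ref{cor:ZhomeqbFree}), and then analyses the first projection $r_1 \colon \mcZ^r \to K$. For each $G \in K$ one has $r_1^{-1}(K_{\leq G}) = p_1^{-1}(L_{\leq G})$, which by Lemma~\ref{lem:Fibres&CoreSubgraphs} deformation retracts to $\Core(G)$; Lemma~\ref{lem:SubgraphstoCores} together with Proposition~\ref{prop:X(G) homotopy} identifies this with an $(\rank-3)$-connected space. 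An application of Lemma~\ref{lem:QuillenFibres}, using that $K$ is contractible, then yields the desired connectivity of $\mcZ^r$, hence of $\bFreeSred$, hence of $\partial \jewel$.
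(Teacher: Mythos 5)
Your proposal is correct and matches the paper's own (implicit) argument exactly: the corollary is stated as an immediate consequence of the homotopy equivalence $\partial\jewel\simeq\bFreeSred$ from Theorem~\ref{thm:BoundaryJewelSpace} together with Theorem~\ref{thm:connectedness bFreeSred}. Your extra paragraph recounting how Theorem~\ref{thm:connectedness bFreeSred} is proved is also faithful to the paper.
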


For $\bFreeS$, we can further improve the result of Theorem~\ref{thm:connectedness bFreeSred} because the following lemma gives us additional information about the fibres of $p_1$.

\begin{lemma}\label{lem:contractible} For $G \in L$, let $f \colon p_1^{-1}(L_{\leq G}) \to \mcZ$ be the inclusion map. Then the induced map on homotopy groups $f_{\ast} \colon \pi_{\rank-2} (p_1^{-1}(L_{\leq G})) \to \pi_{\rank-2}(\mcZ)$ is trivial. \end{lemma}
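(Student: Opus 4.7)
The plan is to exhibit, for each generator $[\alpha]$ of $\pi_{\rank-2}(p_1^{-1}(L_{\leq G}))$, an explicit null-homotopy of $f(\alpha)$ in $\mcZ$ by deforming it into a contractible subspace coming from a blow-up with a separating edge.

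First I would reduce to the case where $G$ has no separating edge: otherwise, by Proposition~\ref{prop:X(G) homotopy} together with Lemmas~\ref{lem:SubgraphstoCores} and~\ref{lem:Fibres&CoreSubgraphs}, the fibre $p_1^{-1}(L_{\leq G}) \simeq \Core(G)$ is already contractible and the claim is trivial. Under the retraction $p_1^{-1}(L_{\leq G}) \simeq \Core(G)^{op}$ from Lemma~\ref{lem:Fibres&CoreSubgraphs} together with the homotopy equivalence $\Core(G) \simeq \X(G)$ of Lemma~\ref{lem:SubgraphstoCores}, the generators of $\pi_{\rank-2}$ are identified with the apartment spheres $S_T \subset \X(G)$, one per spanning tree $T \subset G$: namely $S_T$ is the boundary of the simplex with vertices $\{E(G) \setminus e : e \notin T\}$. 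Via the Alexander duality isomorphism $H_{\rank-2}(\X(G)) \cong H^{|V|-2}(\Forest(G))$ from Lemma~\ref{lem:AlexanderDuality}, these classes are dual to the top cells of $\Forest(G)$ (the spanning trees of $G$) and hence generate $\pi_{\rank-2}(\Core(G))$.

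Fix a spanning tree $T \subset G$, and set $R := G/T \in L$, the associated rose with $\rank$ petals, so that $R \leq G$ in $L$. Since $\rank \geq 2$, I can blow up $R$ to a graph $\hat{R} \in L$ by splitting its unique vertex into two vertices joined by a new edge $e$, with at least one loop at each side; then $e$ is separating in $\hat{R}$, every vertex has valence at least three, and $R \leq \hat{R}$. Applying Proposition~\ref{prop:X(G) homotopy} and Lemmas~\ref{lem:SubgraphstoCores} and~\ref{lem:Fibres&CoreSubgraphs}, the fibre $p_1^{-1}(L_{\leq \hat{R}})$ is contractible and contains $p_1^{-1}(R)$.

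Define the poset map $\phi_T \colon S_T \to p_1^{-1}(R) \subseteq \mcZ$ by $(G, G/H) \mapsto (R, R/(H \setminus T))$. Every $H \in S_T$ contains $T$ (having the form $E(G) \setminus F$ with $\emptyset \neq F \subsetneq E(G) \setminus T$), so $H \setminus T$ is a proper non-empty subset of the loops of $R$, hence a core subgraph; and since $T \subseteq H$, the factorisation of the collapses $G \to R \to R/(H \setminus T)$ yields a canonical identification of splittings $R/(H \setminus T) = G/H$. This gives $\phi_T((G, G/H)) \leq_{\mcZ} (G, G/H)$, so by Lemma~\ref{lem:HomotopicPosetMaps} the maps $\phi_T$ and the inclusion $S_T \hookrightarrow \mcZ$ are homotopic. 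Since $\phi_T(S_T) \subseteq p_1^{-1}(L_{\leq \hat{R}})$ lies in a contractible subspace, the class $[S_T]$ vanishes in $\pi_{\rank-2}(\mcZ)$; as $T$ varies, the resulting classes exhaust a generating set, proving that $f_*$ is trivial. The main obstacle will be rigorously justifying that the apartment classes $[S_T]$ generate all of $\pi_{\rank-2}$ — this requires following through the Alexander duality correspondence together with Hurewicz (directly applicable for $\rank \geq 4$; for $\rank = 3$, where $\pi_{\rank-2} = \pi_1$ of a wedge of circles is non-abelian, a separate argument is needed to show that apartments generate this free fundamental group).
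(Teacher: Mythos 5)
Your proof is essentially the same as the paper's: after passing to $p_1^{-1}(G)\simeq\Core(G)$, you use Alexander duality against $\Forest(G)$ to identify generators with spanning trees, push each generator into the fibre over the rose $G/T$ via a monotone poset map (Lemma~\ref{lem:HomotopicPosetMaps}), and kill it inside the contractible fibre over a blow-up of $G/T$ with a separating edge. The only cosmetic difference is that you realise the dual class as the ``co-apartment'' sphere $S_T$ of large subgraphs $E(G)\setminus F$ containing $T$, whereas the paper uses the subposet $X(G/\mathcal{E}_i)$ of small subgraphs contained in the non-tree edges; both proofs defer the rank-$3$ case, where Hurewicz is unavailable, to a separate explicit check.
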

\begin{proof}
As $p_1^{-1}(L_{\leq G})$ deformation retracts to $p_1^{-1}(G)$ (see the proof of Lemma \ref{lem:Fibres&CoreSubgraphs}), it is sufficient to show that the map $f_{\ast} \colon \pi_{\rank-2} (p_1^{-1}(G)) \to \pi_{\rank-2}(\mcZ)$ is trivial. In order to prove this, we will describe explicit generators for $\pi_{\rank-2} (p_1^{-1}(G))$ and show that these map to the identity under $f_{\ast}$.

We start by describing $\tilde{H}_{\rank-2}(X(G))$, where $X(G)$ is the poset of all proper subgraphs of $G$ that are non-empty and where at least one component has non-trivial fundamental group: Recall that the poset $\Sub(G)$ of all proper subgraphs of $G$ has a geometric realisation that is homeomorphic to a sphere of dimension $|E(G)|-2 = |V(G)|+\rank -3 $. Hence, by Alexander duality (Lemma \ref{lem:AlexanderDuality}), we have an isomorphism
\begin{equation*}
\tilde{H}_{\rank-2}(X(G)) \cong \tilde{H}^{|V(G)|+\rank -3 -(\rank-2)-1} (\Sub(G)\bsl X(G)) = \tilde{H}^{|V(G)|-2}(\Forest(G)).
\end{equation*}
The collection $\{\sigma^i\}_{i=1}^N$ of $(|V(G)|-2)$-simplices of $\real{\Forest(G)}$ is in bijection with the collection $\{\mathcal{E}_i\}_{i=1}^N$ of maximal forests of $G$. Let $\phi_i$ be the cochain dual to $\sigma^i$, i.e. $\phi_i(\sigma^i) = 1$ and $\phi_i(\sigma^j) = 0$ for $j \neq i$. As $\real{\Forest(G)}$ has dimension $|V(G)|-2$, all of these cochains are cocycles and the corresponding cohomology classes $\{[\phi_i]\}_{i=1}^N$ generate $\tilde{H}^{|V(G)|-2}(\Forest(G))$. 
 Under the isomorphism given by Alexander duality, the class $[\phi_i]$ is identified with the cycle corresponding to the simplex 
whose vertices are all subgraphs of $G-\mathcal{E}_i$. This simplex can be identified with (the order complex of) $\Sub(G/\mathcal{E}_i)$, seen as a subposet of $X(G)$.
Since $\mathcal{E}_i$ is a maximal forest of $G$, the quotient $G/\mathcal{E}_i$ is a rose and we have $\Sub(G/\mathcal{E}_i) = X(G/\mathcal{E}_i)$. We write $\{\real{X(G/\mathcal{E}_i)}\}_{i=1}^N$ for the generating system of $\tilde{H}_{\rank-2}(X(G))$ that we obtain this way. 

This gives us a generating set for $\pi_{\rank-2} (p_1^{-1}(G))$ as follows: As explained in the proof of Lemma \ref{lem:Fibres&CoreSubgraphs}, $p_1^{-1}(G)$ is canonically identified with $C(G)$, the poset of proper core subgraphs of $G$. This poset is a deformation retract of $X(G)$ by Lemma \ref{lem:SubgraphstoCores}, yielding an identification $\pi_{\rank-2} (p_1^{-1}(G)) \cong \pi_{\rank-2}(X(G))$. The involved posets are $(\rank - 3)$-connected by Lemma~\ref{prop:X(G) homotopy}, so for $\rank \geq 4$, the Hurewicz Theorem implies that this is further identified with $H_{\rank-2}(X(G))=\tilde{H}_{\rank-2}(X(G))$. 

After all these identifications, $f_{\ast}$ maps the generator $\real{X(G/\mathcal{E}_i)} \in H_{\rank-2}(X(G))\cong \pi_{\rank-2} (p_1^{-1}(G))$ to the homotopy class of the map $\psi_i: S^{\rank-2}\to \real{\mcZ}$ that sends $S^{\rank-2}$ homeomorphically to the boundary of the simplex whose vertices are all pairs $(G, S)$, where $S$ is obtained by collapsing a subgraph of the rose $G/\mathcal{E}_i$. For all these vertices, we have $(G, S) \geq (G/\mathcal{E}_i, S)$. This implies that $\psi_i$ is homotopic to the map $\tilde{\psi}_i$ whose image is the boundary of the simplex with vertices $(G/\mathcal{E}_i, S)$. 
Because $G/\mathcal{E}_i$ is a rose, there exists $G_i \in L$ such that $G_i$ has a separating edge and $G_i > G/\mathcal{E}_i$.  Now the image of $\tilde{\psi}_i$ is contained in $\real{p_1^{-1}(L_{\leq G_i})}\subseteq \real{\mcZ}$ and this space is contractible by Lemma~\ref{prop:X(G) homotopy}. Hence, $[\tilde{\psi}_i]= f_{\ast}(\real{X(G/\mathcal{E}_i)})$ is trivial.

For $\rank=3$, the lemma follows by an explicit computation.
\end{proof}

\begin{remark} It is possible that in $\mcZ$, the preimage $p_1^{-1}(G)$ has multiple contractions. This can give rise to higher dimensional spheres in $\mcZ$. See Example~\ref{ex:n=2} at the end of the paper. \end{remark}

We are now ready to prove:
\begin{theorem}
\label{thm:connectedness bFreeS}
For $\rank\geq 2$, the poset $\bFreeS$ is $(\rank-2)$-connected.
\end{theorem}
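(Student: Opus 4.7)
The plan is to apply Lemma~\ref{lem:UpConnectivitiyByOne} to the projection $p_1\colon \mcZ \to L$ with $k = \rank - 3$, and then transfer the conclusion to $\bFreeS$ via the homotopy equivalence $p_2\colon \mcZ \to \bFreeS$ provided by Corollary~\ref{cor:ZhomeqbFree}. Concretely, it suffices to verify three hypotheses for the pair $(\mcZ, L)$ and then read off the conclusion.

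First, the target $L$ is the spine of Outer space, which is contractible by Culler--Vogtmann, so in particular $(\rank-2)$-connected, which is the required $(k+1)$-connectivity of $L$. Second, for every $G \in L$, the fibre $p_1^{-1}(L_{\leq G})$ deformation retracts to $p_1^{-1}(G) \cong \Core(G)^{op}$ by Lemma~\ref{lem:Fibres&CoreSubgraphs}; combining Lemma~\ref{lem:SubgraphstoCores} with Proposition~\ref{prop:X(G) homotopy}, this fibre is either contractible (when $G$ has a separating edge) or $(\rank-2)$-spherical (when $G$ does not), and hence in every case $(\rank-3)$-connected, which is the required $k$-connectivity of the fibres. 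Third, the inclusion-induced map $\pi_{\rank-2}(p_1^{-1}(L_{\leq G})) \to \pi_{\rank-2}(\mcZ)$ is trivial by Lemma~\ref{lem:contractible}, which is precisely the non-routine ingredient supplied just before the statement.

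Invoking Lemma~\ref{lem:UpConnectivitiyByOne} with these inputs yields that $\mcZ$ is $(\rank-2)$-connected, and pushing this across the homotopy equivalence $p_2\colon \mcZ \xrightarrow{\simeq} \bFreeS$ gives the same conclusion for $\bFreeS$, as required for $\rank \geq 3$. The case $\rank = 2$ amounts to checking that $\bFreeS$ is non-empty and path-connected; since $\bFreeS$ contains the one-edge splittings $\free_2 = A \ast B$ with both factors non-trivial and one-edge HNN-type splittings (which sit above such $A \ast B$-splittings whenever $A$ or $B$ contains a basis element), a brief direct inspection establishes this by hand, so I would just mention it as a separate short check.

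The main obstacle has already been dealt with: it is precisely Lemma~\ref{lem:contractible}, which guarantees that the generators of $\pi_{\rank-2}$ of each fibre die in $\mcZ$ by collapsing onto a graph with a separating edge (whose fibre under $p_1$ is contractible). Everything else in the proof is bookkeeping: identifying the fibres with posets of core subgraphs and plugging into the machinery of Section~\ref{sec:prelim_poset_topology}.
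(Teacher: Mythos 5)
Your proposal matches the paper's proof essentially verbatim: both invoke Corollary~\ref{cor:ZhomeqbFree} to pass between $\bFreeS$ and $\mcZ$, identify the fibres of $p_1$ via Lemma~\ref{lem:Fibres&CoreSubgraphs}, Lemma~\ref{lem:SubgraphstoCores} and Proposition~\ref{prop:X(G) homotopy}, and then feed the vanishing result of Lemma~\ref{lem:contractible} into Lemma~\ref{lem:UpConnectivitiyByOne} with $k = \rank-3$. Your explicit flagging of the $\rank=2$ case as a short separate check is a reasonable extra precaution (since the proof of Lemma~\ref{lem:contractible} as written only treats $\rank \geq 3$) but does not change the argument.
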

\begin{proof}
By Corollary~\ref{cor:ZhomeqbFree}, $\bFreeS$ is homotopy equivalent to $\mcZ$. The spine $L$ of Outer space is contractible and it follows from Lemma~\ref{lem:Fibres&CoreSubgraphs} and Proposition~\ref{prop:X(G) homotopy} that the fibres of $p_1:\mcZ\to L$ are either $(\rank-3)$-connected or contractible. Using Lemma~\ref{lem:contractible} and applying Lemma~\ref{lem:UpConnectivitiyByOne}, one gets that $\bFreeS$ is $(\rank-2)$-connected. 
\end{proof}

Proposition~\ref{prop:HomotopyEquivalence} immediately implies the following corollary which completes the proof of Theorem~\ref{thm:freefactorsystems}.
\begin{corollary}
The complex $\FFS$ of free factor systems is $(\rank-2)$-connected.
\end{corollary}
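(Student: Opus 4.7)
The statement is the final corollary, asserting that $\FFS$ is $(\rank-2)$-connected. My plan is to observe that this is essentially immediate from combining two results already established in the paper, and to spell out why no extra work is needed.

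The first ingredient is Proposition~\ref{prop:HomotopyEquivalence}(1), which gives a homotopy equivalence $\real{\bFreeS} \simeq \real{\FFS}$. Recall that this equivalence was obtained by applying Quillen's fibre lemma (Lemma~\ref{lem:QuillenFibres}) to the order-reversing map $f \colon \bFreeS \to \FFS$ sending a free splitting $S$ to its vertex group system $\V(S)$; the fibres were identified with relative free splitting complexes $\FreeS(\ffa)$, which are contractible by Theorem~\ref{thm:FreeSplittingIntroduction}. The second ingredient is Theorem~\ref{thm:connectedness bFreeS}, which asserts that $\bFreeS$ is $(\rank-2)$-connected for $\rank \geq 2$.

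Since $n$-connectedness is a homotopy invariant, combining these two statements yields the result: $\FFS \simeq \bFreeS$ is $(\rank-2)$-connected. The proof is therefore a one-line invocation of Proposition~\ref{prop:HomotopyEquivalence}(1) and Theorem~\ref{thm:connectedness bFreeS}, with no real obstacle to overcome—all of the work has already been done in the preceding sections. In particular, the case $\rank=2$ only asserts nonemptiness/connectedness weaker conditions, which are already visible directly from the definition of $\FFS$, so the substantive content is the case $\rank \geq 3$ where the improvement over the naive bound coming from the fibre dimension (which would only give $(\rank-3)$-connectedness via the direct application of Lemma~\ref{lem:QuillenFibres} to the map $f$) is inherited from the improved bound for $\bFreeS$ obtained via Lemma~\ref{lem:UpConnectivitiyByOne}.
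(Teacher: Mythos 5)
Your proposal is correct and matches the paper's argument exactly: the corollary is stated immediately after Theorem~\ref{thm:connectedness bFreeS} and the paper deduces it in one line from that theorem together with the homotopy equivalence $\bFreeS \simeq \FFS$ of Proposition~\ref{prop:HomotopyEquivalence}. Your additional remarks about the underlying fibre-lemma machinery and the role of Lemma~\ref{lem:UpConnectivitiyByOne} are accurate context rather than extra proof steps.
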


Note that in contrast to the situation here, these arguments cannot be used to deduce $(\rank-2)$-connectivity of $\partial \jewel$ as the fibres of the map $r_1$ a priori do not satisfy the conditions needed to apply Lemma~\ref{lem:UpConnectivitiyByOne}. For more comments on the optimality of the result obtained here, see Section~\ref{sec:CV_2}.

\section{Some remarks}
\label{sec:remarks}

\subsection{Relative complexes}
In \cite{HM:RelativeFree}, the authors do not only study the whole poset of free factor systems, but also relative versions of it. For a given free factor system $\ffa$, the \emph{poset of free factor systems of $\free$ relative to $\ffa$} consists of all free factor systems $\ffb$ in $\free$ such that there are proper inclusions $\ffa\sqsubset \ffb \sqsubset \free$. In other words, this poset is given by $(\FFS)_{>\ffa}$.

Replacing $\CV$ by Outer space relative to $\ffa$ one can apply the arguments used in the previous sections in order to show higher connectivity of these relative complexes of free factor systems. As already in the ``absolute'' case, we make use of relative Outer space, most proofs can be taken literally for the relative setting as well. The fibres one obtains and needs to analyse here correspond to posets of graphs with a labelling of the vertices (as described e.g. in \cite{BF:Bordification}). This can be done using similar arguments as in the proof of Proposition~\ref{prop:cX(G) homotopy}.

In his thesis \cite{Bru:buildingsfreefactora}, the first-named author studies these relative versions and obtains analogues of Theorem \ref{thm:FreeFactorComplex}, Theorem \ref{thm:freefactorsystems} and Theorem \ref{thm:FreeSplittingIntroduction}. The results there are proved in the more general setting of automorphism groups of free products. These are used to determine the homotopy type of an analogue of the free factor complex for automorphisms of right-angled Artin groups in \cite{Bru:buildingsfreefactor}.

\subsection{Sphere systems}
\label{sec:spheresystems}
There is an equivalent description of the free splitting complex in terms of sphere systems. For this, let $M_\rank$ be the connected sum of $\rank$ copies of $S^1\times S^2$. This manifold has fundamental group isomorphic to $\free$. A collection $\set[S_1,\ldots ,S_k]$ of disjointly embedded $2$-spheres in $M_\rank$ is called a \emph{sphere system} if no $S_i$ bounds a ball in $M_\rank$ and no two spheres are isotopic. The set of isotopy classes of such sphere systems has a partial order given by inclusion of representatives. The order complex $S(M_\rank)$ of this poset is called the \emph{complex of sphere systems}.
Considering the fundamental group of its complement, each sphere system induces a free splitting of $\free$. In fact, the free splitting complex $\FreeS$ is the barycentric subdivision of $S(M_\rank)$.

Following this, our considerations in this article can be translated in the language of such sphere systems: The complex $\bFreeS$ corresponds to the complex $S_\infty\subset S(M_\rank)$ consisting of all sphere systems $\sigma$ whose complement $M_\rank\bsl\sigma$ has at least one connected component that is not simply-connected.
The complex $\FreeSone$ on the other hand corresponds to $S_1(M_\rank)\subset S(M_\rank)$, the subcomplex of $S(M_\rank)$ consisting of sphere systems whose complement has exactly one component that is not simply-connected.

Using this description, $(\rank-3)$-connectivity of $\bFreeS$ can be deduced very quickly as follows:
\begin{proof}[Proof of $(\rank-3)$-connectivity via sphere systems \cite{V:Private}]
Whenever one takes a sphere system $\sigma$ consisting of at most $(\rank-1)$-many spheres, it induces a free splitting of $\pi_1(M_\rank)\cong \free$ with at most $\rank-1$ orbits of edges. It follows that at least one of the vertex groups of this splitting must be non-trivial, implying that the complement $M_\rank\bsl\sigma$ contains at least one connected component with non-trivial fundamental group. Hence, the entire $(\rank-2)$-skeleton of $S(M_\rank)$ is contained in $S_\infty\cong\bFreeS$. However, the complex $S(M_\rank)$ is contractible (see \cite{Hatcher:FSContractible}), so we have $\set[0]\cong\pi_{\rank-3}(S(M_\rank))\cong\pi_{\rank-3}(S_\infty)$.
\end{proof}
The same argument also shows $(\rank-3)$-connectivity of $\bFreeSred\simeq\partial\jewel$, the second part of Theorem~\ref{thm:BoundaryJewelSpace}.
However, we would like to point out that this does a priori not give a proof for $(\rank-2)$-connectivity of $\bFreeS$ and also in particular does not show connectivity properties of $\FreeSone\simeq \FreeF$.

\subsection{The simplicial boundaries of $\CV[2]$ and $\CVred[2]$}
\label{sec:CV_2}
The difference in the degree of connectivity between the reduced and the unreduced setting might be surprising at first glance, but in fact it can easily be seen when one considers the case where $\rank=2$.

Here, reduced Outer space $\CVred$ can be identified with the tesselation of the hyperbolic plane by the Farey graph (an excellent picture of this tesselation can be found in \cite{V:WhatIs}). The triangles of this tessellation correspond to the three-edge ``theta graph''. Each side of such a triangle is given by graphs that are combinatorially roses with two petals and obtained by collapsing one of the edges of the theta graph; as the rose is a graph of rank $2$, these edges are contained in the interior of $\CVred[2]$. In contrast to that, the vertices of the triangles correspond to loops obtained by collapsing two edges of the theta graph and hence are points sitting at infinity. Hence, the simplicial boundary of $\CVred[2]$ is homeomorphic to $\mathbb{Q}$, a countable join of $0$-spheres.

\begin{figure}
\begin{center}
\includegraphics[scale=0.7]{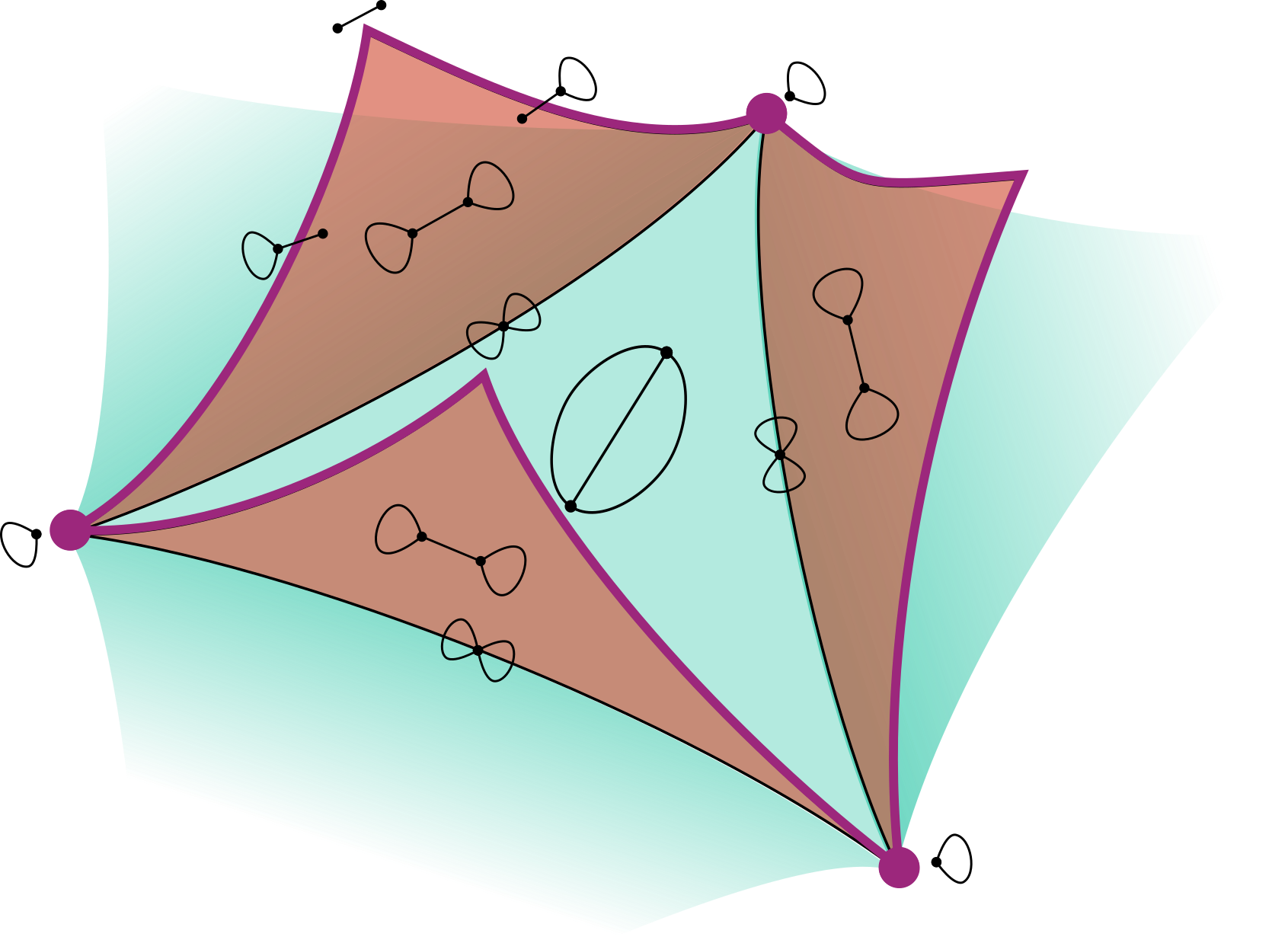}
\end{center}
\caption{A part of $\CV[2]$. The turquoise bottom part is reduced Outer space, together with the red fins on top it forms unreduced Outer space. The faces at infinity are coloured in plum, where the three round vertices are the only points contained in the reduced boundary $\partial_s\CVred$.}
\label{fig:CV_2}
\end{figure}

Starting from reduced Outer space, unreduced $\CV[2]$ is obtained by adding ``fins'' above each edge of the Farey graph. These fins are triangles corresponding to the ``dumbbell graph'' which consists of two loops connected by a separating edge. Collapsing this separating edge, one obtains the side of the triangle that corresponds to the rose. On the other hand, collapsing one of the two loops of the dumbbell yields a graph of rank one, forcing the other two sides of the triangle to sit at infinity. Inside the simplicial boundary $\partial_s\CV[2]$, the concatenation of these sides now connects two vertices of the adjacent theta graph triangles as depicted in Figure~\ref{fig:CV_2}. It follows that  $\partial_s\CV[2]$ is isomorphic to the barycentric subdivision of the Farey graph which is in turn homotopy equivalent to a countable wedge of circles.

This argument answers Question~\ref{qes:HomotopyType?} for $\rank=2$: Here the lower bounds we get for the degree of connectivity of the simplicial boundaries $\partial_s\CVred\simeq\partial \jewel$ and $\partial_s\CV\simeq\FFS$ are optimal and furthermore, the homology of these complexes is concentrated in dimension $\rank-2$ and $\rank-1$, respectively. For higher rank, this is however not clear at all as $\partial \jewel$ and $\FFS$ have dimension $2\rank-3$.
In the case of $\partial \jewel$, there are obvious $(\rank-2)$-spheres one might expect to be non-trivial elements of $\pi_{\rank-2}(\partial \jewel)$. Namely whenever one has an open $(\rank-1)$-simplex in $\CVred$ corresponding to a rose with $\rank$ petals, all of its faces are contained in the simplicial boundary $\partial_s\CVred$. We suspect that the spheres formed by these faces are not contractible inside the boundary but right now we do not see how this could be shown.

\clearpage

\section{Appendix}
The following example illustrates Lemma~\ref{lem:contractible} in the case where $\rank=2$.
\begin{example}\label{ex:n=2}
For $\rank = 2$ and $G \in L$, the preimage $p_1^{-1}(G)$ is either contractible or a wedge of 0-spheres. Suppose $G$ is a theta graph. Then a 0-sphere $s_i$ in $p_1^{-1}(G)$ is isomorphic to $\{G\} \times X(G_i)$, where $G_i$ is a rose obtained from $G$ by collapsing a maximal forest, for $i = 1,2,3$. We claim that each such 0-sphere is contractible in $\mcZ$. Indeed, for each rose $G_i$ consider the dumbbell graph $G_i'$ obtained by blowing up $G_i$ to have a separating edge. Then $p_1^{-1}(G_i')$ is contractible. Now in $\mcZ$, the sphere $s_i$ can be homotoped into $p_1^{-1}(G_i')$. Thus each $s_i$ is contractible in $\mathcal{Z}$. See Figure~\ref{fig:n=2}.   
\end{example}

\begin{figure}[h]
  \centering{ 
   \def\svgscale{.7}
    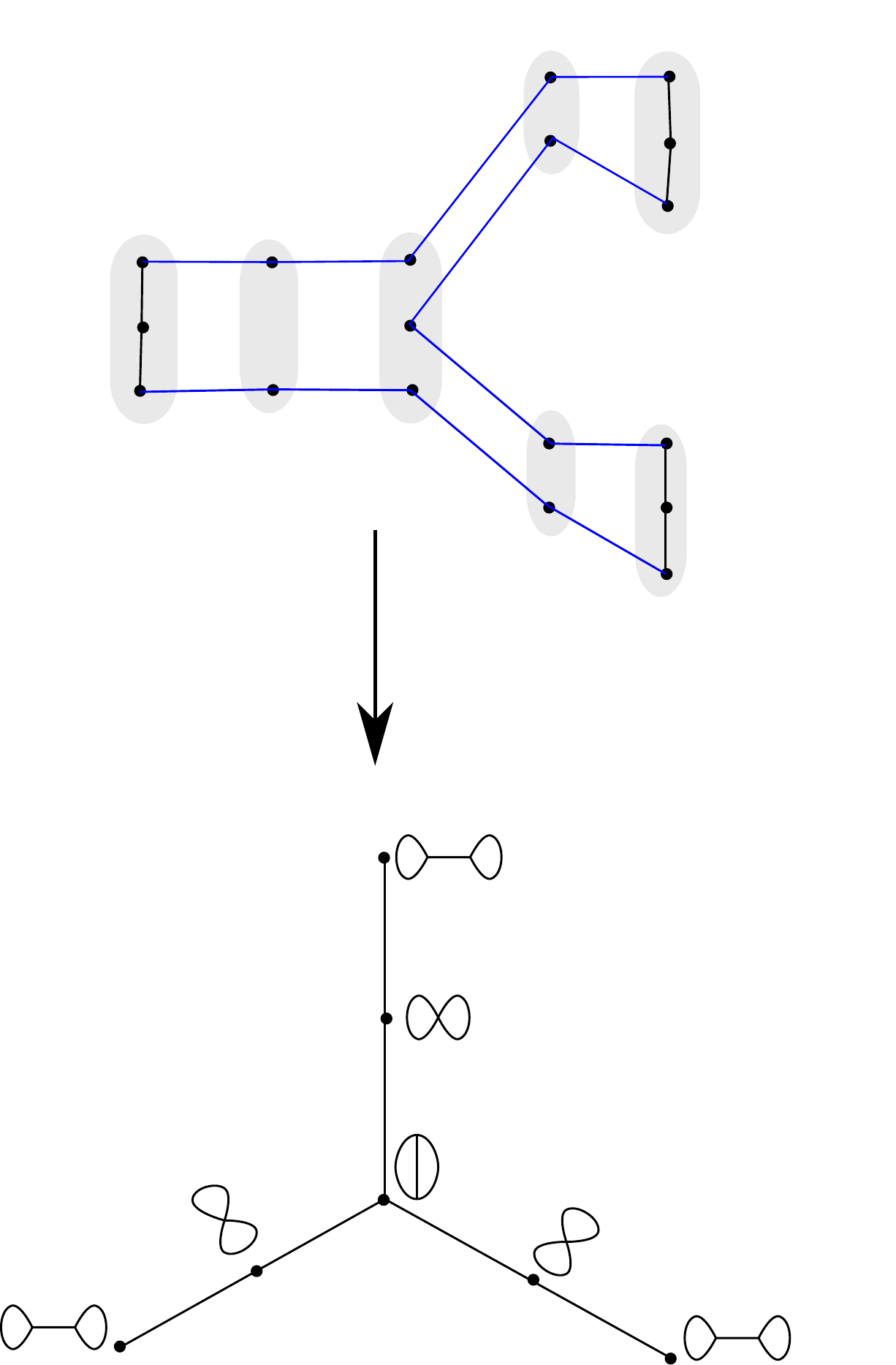
    \caption{Projection map $p_1\colon \mathcal{Z}\to L$ for Example~\ref{ex:n=2}.}
    \label{fig:n=2}
  }
\end{figure}

\clearpage

\begin{proof}[Proof of Proposition~\ref{prop:X(G) homotopy} for $\rank=3$]
\label{proof for rank=3}
The following figure shows all possible combinatorial types of graphs in $\CV[3]$:
\begin{figure}[h]
\begin{center}
\includegraphics[scale=1]{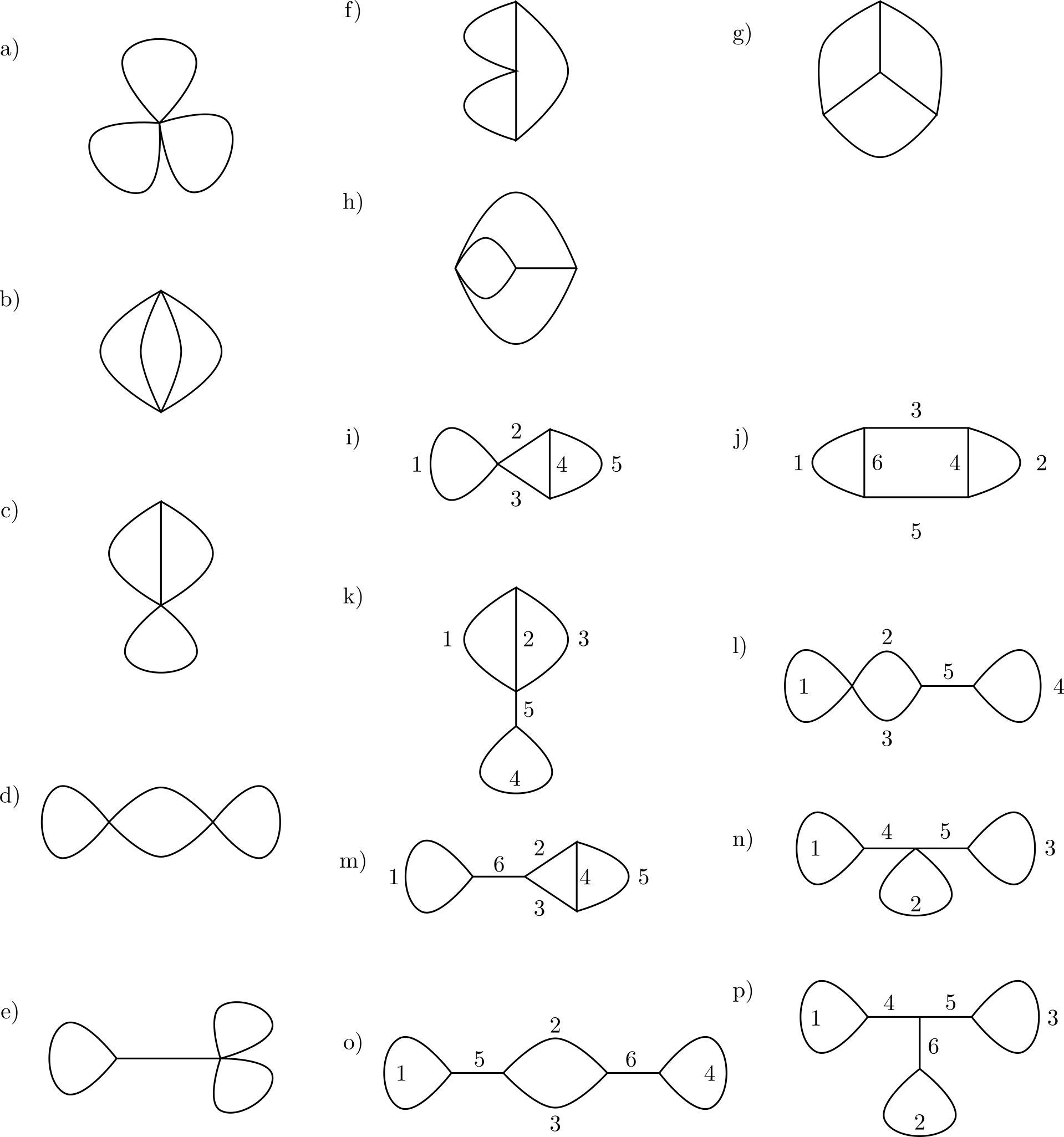}
\end{center}
\caption{All combinatorial types of graphs in $\CV[3]$}
\label{fig:graphs in CV_3}
\end{figure}

We want to show that for each such graph $G$, the poset $\X(G)$ is homotopy equivalent to a (non-trivial) wedge of circles if  $G$ does not contain a separating edge and is contractible otherwise. Using Lemma~\ref{lem:SubgraphstoCores}, it suffices to show the same statement for the poset $\Core(G)$ of all core subgraphs.

If $G$ is a rose, the realisation of $\X(G)=\Sub(G)$ is the boundary of a triangulated $1$-sphere. For the graphs b) -- e) in Figure~\ref{fig:graphs in CV_3}, the complex $\X(G)$ is depicted in Figure~\ref{fig:X(G) 4 edges}.

As the graphs f), g) and h) do not contain any disconnected core subgraphs, the claim here follows from Proposition~\ref{prop:cX(G) homotopy}. The only disconnected core subgraph of i) consists of the edges $1,4$ and $5$. Hence, $\Core(G)$ is derived from $\cCore(G)$ by attaching the star of the vertex $\set[1,4,5]$ along its link. It is an easy exercise to check that the result is homotopy equivalent to a circle. The same is true for j) whose only non-connected core subgraph is $\set[1,2,4,6]$.

For the remaining graphs k) -- p),  the following tables define Morse functions $\phi:\Core(G)\to \mathbb{R}$ with contractible descending links:

\vspace{0.5cm}
k) 
\begin{tabular}{c|c}
vertex $v$ & $\phi(v)$
\\
\hline
$st(\set[1,2,3,4])$& 0 \\
$\cCore(G)\bsl st(\set[1,2,3,4])= \set[\set[1,2,3,5], \,\set[1,2,4,5], \,\set[1,3,4,5]]$& 1
\end{tabular}
\vspace{0.5cm}

l) 
\begin{tabular}{c|c}
vertex $v$ & $\phi(v)$
\\
\hline
$st(\set[1,2,3,4])$& 0 \\
$\cCore(G)\bsl st(\set[1,2,3,4])$& 1
\end{tabular}
\hspace{1cm}
m) 
\begin{tabular}{c|c}
vertex $v$ & $\phi(v)$
\\
\hline
$st(\set[1,2,3,4,5])$& 0 \\
$\cCore(G)\bsl st(\set[1,2,3,4,5])$& 1
\end{tabular}
\vspace{0.5cm}

n) 
\begin{tabular}{c|c}
vertex $v$ & $\phi(v)$
\\
\hline
$st(\set[1,2,3,4])$& 0 \\
$\set[1,2,3,5]$& 1 \\
$\set[2,3,5], \,\set[1,3,4,5]$& 2
\end{tabular}
\hspace{1cm} o) and p)
\begin{tabular}{c|c}
vertex $v$ & $\phi(v)$
\\
\hline
$st(\set[1,2,3,4,5])$& 0 \\
$\set[1,2,3,4,6]$& 1 \\
all other core subgraphs containing $6$ & 2
\end{tabular}

\vspace{0.5cm}
As an illustration, we explain why all the descending links for n) are contractible: $st(\set[1,2,3,4])$ is obviously contractible as this is true for any star in a simplicial complex. The vertices of $\Core(G)$ not contained in this star are precisely the proper core subgraphs of $G$ containing the (separating) edge $5$. The descending link of $\set[1,2,3,5]$ contains a unique maximal element and hence is contractible; this cone point is given by $\set[1,2,3]$ which is the unique maximal core subgraph of $\set[1,2,3,5]$ not containing $5$. As $\set[2,3,5]$ does not contain $4$, it is contained in $\set[1,2,3,5]$ which hence forms a cone point of its descending link. Lastly, the link of $\set[1,3,4,5]$ is coned off by $\set[1,3]$.

The interested reader may complete this argument to an alternative proof of Proposition~\ref{prop:X(G) homotopy} for arbitrary $\rank\geq 2$ in the case where $G$ contains at least one separating edge.
\end{proof} 
\clearpage

\newpage

\begin{figure}[h]
  \centering{ 
  \fontsize{8pt}{8pt}\selectfont
  \def\svgscale{.9}
    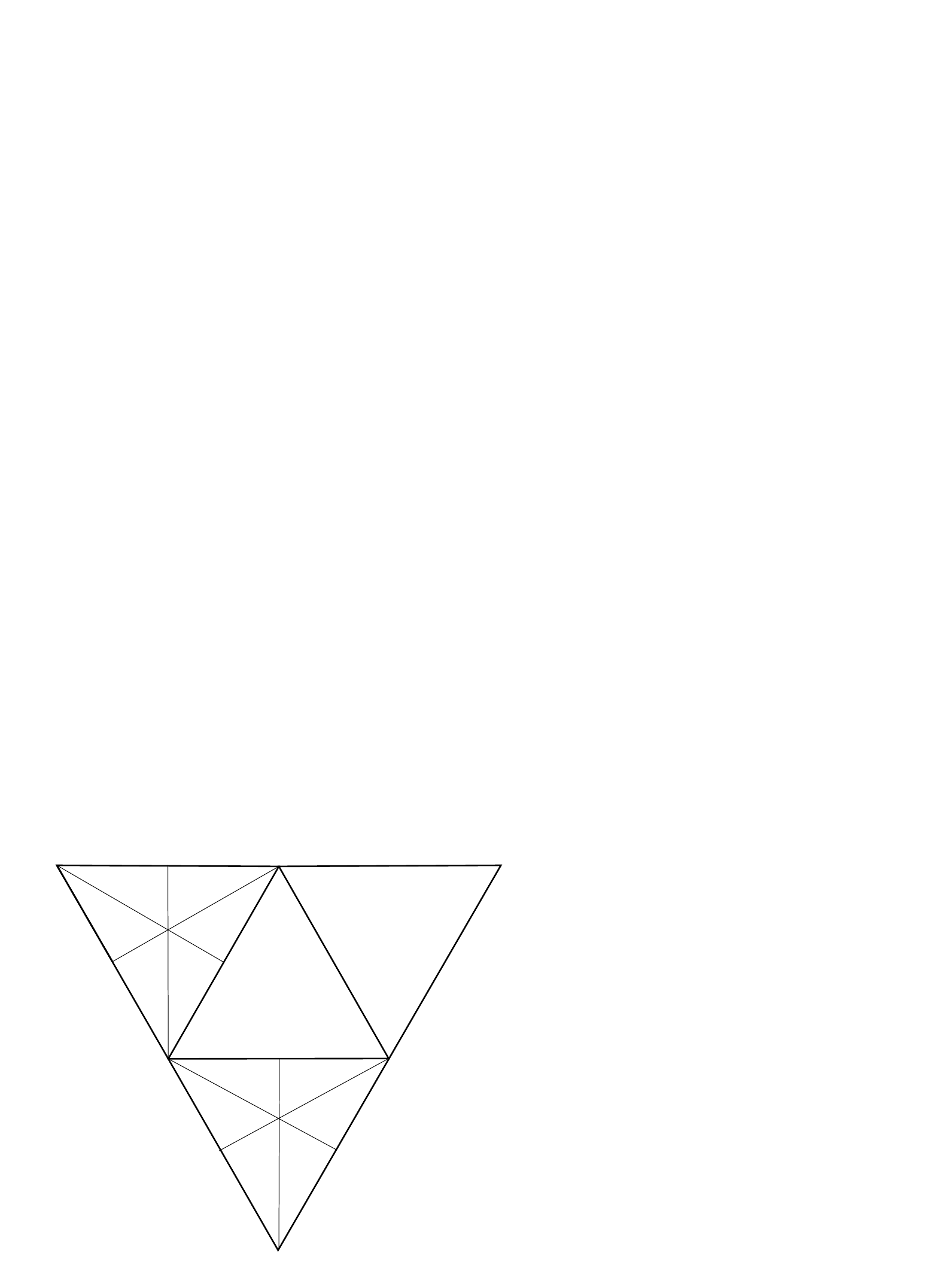
    \caption{The realisation of $\X(G)$ for rank 3 graphs with 4 edges (the tetrahedra were unfolded for better visibility). The first three of them are homotopy equivalent to a wedge of circles while the last one is contractible.}
    \label{fig:X(G) 4 edges}
  }
\end{figure}
\clearpage

\bibliographystyle{alpha}
\bibliography{Bib}

\end{document}